\begin{document}

\newtheorem{innercustomthm}{Theorem}
\newenvironment{customthm}[1]
  {\renewcommand\theinnercustomthm{#1}\innercustomthm}
  {\endinnercustomthm}

\newtheorem{theorem}{Theorem}[section]
\newtheorem{proposition}[theorem]{Proposition}
\newtheorem{lemma}[theorem]{Lemma}
\newtheorem{corollary}[theorem]{Corollary}
\newtheorem{fact}[theorem]{Fact}

\theoremstyle{definition}
\newtheorem{definition}[theorem]{Definition}
\newtheorem{conjecture}[theorem]{Conjecture}
\newtheorem{notation}[theorem]{Notation}
\newtheorem{claim}[theorem]{Claim}

\theoremstyle{remark}
\newtheorem{remark}[theorem]{Remark}
\newtheorem{example}[theorem]{Example}
\newtheorem{question}[theorem]{Question}

\numberwithin{equation}{section}

\def\diff{\operatorname{d}}
\def\tp{\operatorname{tp}}
\def\dme{\operatorname{DME}}
\def\gln{\operatorname{GL}_n}
\def\matn{\operatorname{Mat}_n}
\def\id{\operatorname{id}}
\def\alg{\operatorname{alg}}
\def\Frac{\operatorname{Frac}}
\def\Const{\operatorname{Const}}
\def\spec{\operatorname{Spec}}
\def\span{\operatorname{span}}
\def\exc{\operatorname{Exc}}
\def\Div{\operatorname{Div}}
\def\cl{\operatorname{cl}}
\def\mer{\operatorname{mer}}
\def\trdeg{\operatorname{trdeg}}
\def\ord{\operatorname{ord}}
\def\rank{\operatorname{rank}}
\def\loc{\operatorname{loc}}
\def\acl{\operatorname{acl}}
\def\dcl{\operatorname{dcl}}
\def\fdcf{\operatorname{FDCF}_{0,m}}
\def\dcf{\operatorname{DCF}_0}

\title[$D$-groups and the Dixmier-Moeglin Equivalence]{$D$-groups and the Dixmier-Moeglin Equivalence}

\author{Jason Bell}
\address{Department of Pure Mathematics, University of Waterloo, 200 University Avenue West, Waterloo, Ontario, Canada N2L 3G1}
\email{jpbell@uwaterloo.ca}

\author{Omar Le\'on S\'anchez}
\address{School of Mathematics, University of Manchester, Oxford Road, Manchester, United Kingdom M13 9PL}
\email{omar.sanchez@manchester.ac.uk}

\author{Rahim Moosa}
\address{Department of Pure Mathematics, University of Waterloo, 200 University Avenue West, Waterloo, Ontario, Canada N2L 3G1}
\email{rmoosa@uwaterloo.ca}
\date{\today}

\thanks{2010 {\em Mathematics Subject Classification}: 03C98, 12H05, 16T05, 16S36.}
\thanks{{\em Keywords}: $D$-groups, model theory of differentially closed fields, Dixmier-Moeglin equivalence, Hopf Ore extensions}
\thanks{{\em Acknowledgements}: J. Bell and R. Moosa were partially supported by their respective NSERC Discovery Grants.}

\begin{abstract}
A differential-algebraic geometric analogue of the Dixmier-Moeglin equivalence is articulated, and proven to hold for $D$-groups over the constants.
The model theory of differentially closed fields of characteristic zero, in particular the notion of analysability in the constants, plays a central role.
As an application it is shown that if $R$ is a commutative affine Hopf algebra over a field of characteristic zero, and $A$ is an Ore extension to which the Hopf algebra structure extends, then $A$ satisfies the classical Dixmier-Moeglin equivalence.
Along the way it is shown that all such $A$ are Hopf Ore extensions in the sense of~[Brown et al., ``Connected Hopf algebras and iterated
Ore extensions", {\em Journal of Pure and Applied Algebra}, 219(6), 2015].
\end{abstract}

\maketitle

\setcounter{tocdepth}{1}
\tableofcontents

\section{Introduction}

\noindent
This article is about an analogue of the Dixmier-Moeglin equivalence for differential-algebraic geometry.
(The immediate motivation is an application to the classical noncommutative Dixmier-Moeglin problem, which we will describe later in this introduction.)
The main objects of study here are $D$-varieties.
An introduction to this category is given in~$\S$\ref{dvar-section}, but let us at least recall here that a $D$-variety (over the constants) is an algebraic variety $V$ over a field $k$ of characteristic zero, equipped with a regular section to the tangent bundle $s:V\to TV$.
A $D$-subvariety is an algebraic subvariety $W$ for which the restriction $s{\upharpoonright}_W$ is a section to the tangent bundle of $W$.
There are natural notions of $D$-morphism and $D$-rational map.
For convenience, let us assume that $k$ is algebraically closed.
We are interested in the following properties of an irreducible $D$-subvariety $W\subseteq V$ over $k$:
\vfill\pagebreak
\begin{itemize}
\item{\em $\delta$-primitivity}.
There is a $k$-point of $W$ that is not contained in any proper $D$-subvariety of $W$ over $k$.
\item{\em $\delta$-local-closedness}.
There is a maximum proper $D$-subvariety of $W$ over $k$.
\item{\em $\delta$-rationality}.
There is no nonconstant rational map from $(W,s)$ to $(\mathbb A^1,0)$ over $k$, where here $0$ denotes the zero section to the tangent bundle of the affine line.
\end{itemize}
The question is, for which ambient $D$-varieties $(V,s)$ are these three properties equivalent for all $D$-subvarieties?
It is not hard to see,  and is spelled out in the proof of Corollary~\ref{deduce-diffDME} below, that in general
$$\text{ $\delta$-local-closedness }\implies\text{ $\delta$-primitivity }\implies\text{ $\delta$-rationality.}$$
In earlier work~\cite{pdme}, together with St\'ephane Launois, we used the model theory of the Manin kernel to produce (in any dimension $\geq 3$) a $D$-variety which is itself $\delta$-rational but not $\delta$-locally-closed.
Here we focus on positive results.
The main one, which appears as Corollary~\ref{dgroupDME} below, is the following:
\begin{customthm}{A}
Suppose $(G,s)$ is a $D$-group over the constants -- that is, $G$ is an algebraic group and $s:G\to TG$ is a homomorphism of algebraic groups.
Then for any $D$-subvariety of $(G,s)$, $\delta$-rationality implies $\delta$-local-closedness.  In particular, for every $D$-subvariety of $(G,s)$, $\delta$-rationality, $\delta$-primitivity, and $\delta$-local-closedness are equivalent properties.
\end{customthm}

The proof of Theorem~A relies on the model theory of differentially closed fields.
In model-theoretic parlance, the point is that $\delta$-rationality of $(V,s)$ is equivalent to the generic type of the corresponding Kolchin closed set being weakly orthogonal to the constants, while $\delta$-local-closedness means that the type is isolated.
One context in which one can prove, using model-theoretic binding groups, for example, that weak orthogonality to the constants implies isolation, is when the type in question is analysable in the constants.
We give a geometric explanation of analysability in~$\S$\ref{ci-section} in terms of what we call {\em compound isotriviality} of $D$-varieties.
The reader can look there for a precise definition, but suffice it to say that a compound isotrivial $D$-variety is one that admits a finite sequence of fibrations where at each stage the fibres are isomorphic (possibly over a differential field extension of the base) to $D$-varieties where the section is the zero section.
We show that for compound isotrivial $D$-varieties $\delta$-rationality implies $\delta$-local-closedness
(Proposition~\ref{compoundisotrivial}).
Then we show, using known results about the structure of differential-algebraic groups, that every $D$-subvariety of a $D$-group over the constants is compound isotrivial (Proposition~\ref{ci}).
Theorem~A follows.

It turns out that for our intended application, namely Theorem~B2 appearing later in this introduction, we need Theorem~A to work for $D$-varieties that are slightly more general than $D$-groups.
Given an affine algebraic group $G$, we may as well assume that $G\subseteq\gln$,
so that a regular section to the tangent bundle is then of the form $s=(\id,\overline s)$ where $\overline s:G\to\matn$.
It is not hard to check from how the algebraic group structure is defined on the tangent bundle, that $s:G\to TG$ being a homomorphism is equivalent to the following identity:
$\overline s(gh)=\overline s(g)h+g\overline s(h)$,
where $g,h\in G$ are matrices and all addition and multiplication here is matrix addition and multiplication.
Now suppose we are given a homomorphism to the multiplicative group, $a:G\to\mathbb G_m$.
By an {\em $a$-twisted $D$-group} we mean a $D$-variety $(G,s)$ where $G$ is an affine algebraic group and $s=(\id,\overline s)$ satisfies the identity: $\overline s(gh)=\overline s(g)h+a(g)g\overline s(h)$.
So an $a$-twisted $D$-group is a $D$-group exactly when $a=1$.
We are able to show that $D$-subvarieties of $a$-twisted $D$-groups are also compound isotrivial.
This yields the following generalisation of Theorem~A: {\em For any $D$-subvariety of an $a$-twisted $D$-group over the constants, $\delta$-rationality, $\delta$-primitivity, and $\delta$-local-closedness are equivalent properties.}
The passage from $D$-groups to $a$-twisted $D$-groups turns out to be technically quite difficult, and is done in Section~\ref{atwist}.

It is worth pointing out that we have been intentionally ambigious about the field of definitions in the statements of Theorem~A and its $a$-twisted generalisation.
The reason for this is that the results actually hold true for $D$-subvarieties of $(G,s)$ that are defined over differential field extensions of the base field $k$.
To make this precise one has to give a more general definition of $D$-variety using prolongations rather than tangent bundles, and we have decided to delay this to the main body of the article.
While the final conclusion we are interested in is about $D$-subvarieties over~$k$, this possibility of passing to base extensions is an important part of the inductive arguments involved.

Now for the application to noncommutative algebra, to which Section~\ref{classical} is dedicated.
The classical Dixmier-Moeglin equivalence ($\dme$) is about prime ideals in a noetherian associative algebra over a field of characteristic zero; it asserts the equivalence between three properties of such prime ideals:  primitivity (a representation-theoretic property), local-closedness (a geometric property), and rationality (an algebraic property).
Precise definitions are given at the beginning of~$\S$\ref{classical}.
We are interested in the question of when the $\dme$ holds for skew polynomial rings $R[x;\delta]$ over finitely generated commutative integral differential $k$-algebras $(R,\delta)$.
Recall that $R[x;\delta]$ is the noncommutative polynomial ring in $x$ over $R$ where $xr=rx+\delta(r)$.
This question is not vacuous since examples of such skew polynomial rings failing the $\dme$ were given in~\cite{pdme}; indeed, these were the first counterexamples to the $\dme$ of finite Gelfand-Kirillov dimension.
The connection to $D$-varieties should be clear:  $R=k[V]$ for some irreducible algebraic variety $V$, and the $k$-linear derivation $\delta$ induces a regular section $s:V\to TV$.
So the study of such $(R,\delta)$ is precisely the same thing as the study of $D$-varieties.
We are able to prove (this is Proposition~\ref{ddme-sdme} below) that $R[x;\delta]$ will satisfy the $\dme$ if $\delta$-rationality implies $\delta$-locally-closedness for all $D$-subvarieties of the $D$-variety $(V,s)$ associated to $(R,\delta)$.
Theorem~A therefore answers our question in the special case of differential Hopf algebras. 

\begin{customthm}{B1}
If $(R,\delta)$ is a finitely generated commutative integral differential Hopf $k$-algebra then  $R[x;\delta]$ satisfies the $\dme$.
\end{customthm}
\noindent
Being a differential Hopf algebra means that $R$ has the structure of a Hopf algebra and that $\delta$ commutes with the coproduct -- this is equivalent to saying that $(R,\delta)$ comes from an affine $D$-group $(G,s)$.

More generally than skew polynomial rings, we consider Ore extensions: Suppose $R$ is a finitely generated commutative integral $k$-algebra, $\sigma$ is a $k$-algebra automorphism of $R$, and $\delta$ is a $k$-linear $\sigma$-derivation of $R$ -- meaning that $\delta(rs)=\sigma(r)\delta(s)+\delta(r)s$.
Recall that the Ore extension $R[x;\sigma,\delta]$ is the noncommutative polynomial ring in the variable $x$ over $R$ where $xr=\sigma(r)x+\delta(r)$.
So when $\sigma=\id$ we are in the skew polynomial case discussed above.
What about the $\dme$ for $R[x;\sigma,\delta]$?

\begin{customthm}{B2}
Suppose $R$ is a finitely generated commutative integral Hopf $k$-algebra.
If an Ore extension $R[x;\sigma,\delta]$ admits a Hopf algebra structure extending that on $R$, then $R[x;\sigma,\delta]$ satisfies the $\dme$.
\end{customthm}

\noindent
That Theorem~B1 is a special case of Theorem~B2 uses the (known) fact that one can always extend the Hopf structure on a differential algebra $R$ to the skew polynomial ring extension $R[x;\delta]$, namely by the coproduct induced by $\Delta(x)=x\otimes 1+1\otimes x$.
Theorem~B2 appears as Theorem~\ref{hodme} below.
Its proof goes via a reduction to the case when $\sigma=\id$ and then an application of the stronger $a$-twisted version of Theorem~A discussed above.
Both of these steps use the work of Brown et al.~\cite{bozz} on Hopf Ore extensions.
One obstacle is that while their results hold for much more general $R$ than we are considering, they are conditional on the coproduct of the variable $x$ in the Ore extension taking the special form
$$\Delta(x)=a\otimes x+x\otimes b+v(x\otimes x)+w$$
where $a,b\in R$ and $v,w\in R\otimes_kR$.
This is part of their definition of a Hopf Ore extension, though they speculate about its necessity.
We prove that when $k$ is algebraically closed, after a linear change of variable, $\Delta(x)$ always has the above form.
This is Theorem~\ref{deltax} below, and may be of independent interest:

\begin{customthm}{C}
Suppose $k$ is algebraically closed and $R$ is a finitely generated commutative integral Hopf $k$-algebra.
If an Ore extension $R[x;\sigma,\delta]$ admits a Hopf algebra structure extending that of $R$ then, after a linear change of the variable $x$, 
$$\Delta(x)=a\otimes x+x\otimes b+w$$
for some $a,b\in R$, each of which is either $0$ or group-like, and some $w\in R\otimes_k R$.
In particular, $R[x;\sigma,\delta]$ is a Hopf Ore extension of $R$.
\end{customthm}

It has been conjectured~\cite{BellLeung} that all finitely generated complex noetherian Hopf algebras of finite Gelfand-Kirillov dimension satisfy the $\dme$.
Theorem~B2 verifies a special case.
To make more significant progress on this conjecture one would like to pass from Hopf Ore extensions to iterated Hopf Ore extensions.
As of now, this appears to be beyond the scope of the techniques used here.

\medskip

Throughout this paper, by an {\em affine} $k$-algebra we mean a finitely generated commutative $k$-algebra that is an integral domain.

\medskip
\noindent
{\em Acknowledgements.}
We are grateful to an anonymous referee for a very thorough reading which lead to the discovery of an error in an initial version of this paper.

\bigskip
\section{The $\delta$-$\dme$ for $D$-groups over the constants}
\label{diffDME-section}

\noindent
In this chapter we prove Theorem~A of the introduction.
After some preliminaries, we articulate in~$\S$\ref{geometricddme} the differential-algebraic geometric analogue of the $\dme$ suggested in the introduction, and call it the $\delta$-$\dme$.
A sufficient condition for this to hold in terms of the model-theoretic notion of analysability to the constants is given in~$\S$\ref{ci-section}, and then applied to show that $D$-groups over the constants satisfy the $\delta$-$\dme$ in~$\S$\ref{dgroups-section}.
In a final section we reformulate $\delta$-$\dme$ algebraically, as a statement about commutative differential Hopf algebras, thereby preparing the stage for the application to the classical $\dme$ in chapter~\ref{classical}.

\medskip
\subsection{Preliminaries on  $D$-varieties}
\label{dvar-section}
\noindent
Suppose $k$ is a field of characteristic zero equipped with a derivation $\delta$.
In this section we review the notion of a $D$-variety over $k$.
Several more detailed expositions can be found in the literature, for instance Buium~\cite{Buium} who introduced the notion, and also~\cite[$\S$2]{kowalskipillay}.



We first need to recall what prolongations are.
If $V\subseteq\mathbb A^n$ is an affine algebraic variety over $k$, then by the {\em $\delta$-prolongation} of $V$ is meant the algebraic variety $\tau V\subseteq \mathbb A^{2n}$ over $k$ whose defining equations are
\begin{eqnarray*}
P(X_1,\dots,X_n)&=&0\\
P^\delta(X_1,\dots,X_n)+\sum_{i=1}^n\frac{\partial P}{\partial X_i}(X_1,\dots,X_n)\cdot X_i&=&0
\end{eqnarray*}
for each $P\in I(V)\subset k[X_1,\dots,X_n]$.
Here $P^\delta$ denotes the polynomial obtained by applying $\delta$ to all the coefficients of $P$.
The projection onto the first $n$ coordinates gives us a surjective morphism $\pi:\tau V\to V$.

Note that if $K$ is any $\delta$-field extension of $k$, and $a\in V(K)$, then
$$\nabla(a):=(a,\delta a)\in\tau V(K).$$
If $V$ is defined over the constant field of $(k,\delta)$ then $\tau V$ is nothing other than the tangent bundle $TV$.
In general, $\tau V$ will be a torsor for the tangent bundle; for each $a\in V$ the fibre $\tau_a V$ is an affine translate of  the tangent space $T_aV$.
In particular, if $V$ is smooth and irreducible then so is $\tau V$.

Taking prolongations is a functor which acts on morphisms $f:V\to W$ by acting on their graphs.
It preserves the following properties of a morphism: \'etale-ness, being a closed embedding, and being smooth.
The functor $\tau$ acts naturally on rational maps also; this is because for $U$ a Zariski open subset of an irreducible variety $V$, $\tau V{\upharpoonright}_U=\tau(U)$ is Zariski open in $\tau(V)$.
Moreover, prolongations commute with base extension to $\delta$-field extensions.

We have restricted our attention here to the affine case merely for concreteness.
The prolongation construction extends to abstract varieties by patching over an affine cover in a natural and canonical way.

A {\em $D$-variety over $k$} is an algebraic variety $V$ over $k$ equipped with a regular section $s:V\to\tau V$ over $k$.
An example is when $V$ is defined over the constants and $s:V\to TV$ is the zero section.

If $V$ is affine then a $D$-variety structure on $V$ is nothing other than an extension of $\delta$ to the co-ordinate ring $k[V]$.
Indeed,  if $s:V\to \tau V$ is given by $s(X)=\big(X,s_1(X),\dots,s_n(X)\big)$ in variables $X=(X_1,\dots,X_n)$, then we can extend $\delta$ to $k[X]$ by $X_j\mapsto s_j(X)$, and this will induce a derivation on $k[V]$.
Conversely, given an extension of $\delta$ to $k[V]$, and choosing $s_j(X)$ to be such that $\delta\big(X_j+I(V)\big)=s_j(X)+I(V)$, we get that $s:=\big(\id,s_1,\dots,s_n\big):V\to\tau V$ is a regular section.

A {\em $D$-subvariety} of $(V,s)$ is a closed algebraic subvariety $W\subseteq V$, over a possibly larger $\delta$-field $K$, such that $s(W)\subseteq\tau W$.
In principle one should talk about the base extension of $V$ to $K$ before talking about subvarieties {\em over $K$}, but as prolongations commute with base extension, and following standard model-theoretic practices, we allow $D$-subvarieties to be defined over arbitrary $\delta$-field extensions unless explicitly stated otherwise.

A $D$-variety $(V,s)$ over $k$ is said to be {\em $k$-irreducible} if $V$ is $k$-irreducible as an algebraic variety.
In this case $s$ induces on $k(V)$ the structure of a $\delta$-field extending~$k$.
A $D$-variety $(V,s)$ is called {\em irreducible} if $V$ is absolutely irreducible.
In general, every irreducible component of $V$ is a $D$-subvariety over $k^{\alg}$ and these are called the {\em irreducible components} of $(V,s)$.

A {\em morphism} of $D$-varieties $(V,s)\to (W,t)$ is a morphism of algebraic varieties $f:V\to W$ such that
$$\xymatrix{
\tau V\ar[r]^{\tau f}&\tau W\\
V\ar[u]^s\ar[r]^f&W\ar[u]_t
}$$
commutes.
It is not hard to verify that the pull-back of a $D$-variety, and the Zariski closure of the image of a $D$-variety, under a $D$-morphism, are again $D$-varieties.

In the same way, we can talk about {\em rational maps} between $D$-varieties.
A useful fact is that if $U$ is a nonempty Zariski open subset of $V$, then the prolongation of $U$ is the restriction of $\tau V$ to $U$, and so $(U,s{\upharpoonright}_U)$ is a $D$-variety in its own right.
So a rational map on $V$ is a $D$-rational map if it is a $D$-morphism when restricted to the Zariski open subset on which it is defined.

A {\em $D$-constant} rational function on a $D$-variety $(V,s)$ over $k$ is a rational map over $k$ from $(V,s)$ to $(\mathbb A^1,0)$ where $0$ denotes the zero section to the tangent bundle of the affine line.
In the case when $(V,s)$ is $k$-irreducible, they correspond precisely to the $\delta$-constants of $\big(k(V),\delta\big)$.

\medskip
\subsection{Differentially closed fields and the Kolchin topology}
Underlying our approach to the study of $D$-varieties is the model theory of {\em existentially closed} $\delta$-fields (of characteristic zero).
These are $\delta$-fields $K$ with the property that any finite sequence of $\delta$-polynomial equations and inequations over $K$ which have a solution in some $\delta$-field extension, already have a solution in $K$.
The class of existentially closed $\delta$-fields of characteristic zero is axiomatisable in first-order logic, and its theory is denoted by $\operatorname{DCF}_0$.
We will work in a fixed model of this theory, an existentially closed $\delta$-field~$K$.
In particular, $K$ is algebraically closed.
We let $K^\delta$ denote the field of constants of $K$; it is an algebraically closed field that is {\em pure} in the sense that the structure induced on it by $\dcf$ is simply that given by the language of rings.

Suppose $(V,s)$ is a $D$-variety over $K$.
Let $x\in V(K)$.
Note that $\{x\}$ is a $D$-subvariety if and only if $\nabla(x)=s(x)$ where recall that $\nabla:V(K)\to\tau V(K)$ is given by $x\mapsto(x,\delta x)$.
We call such points {\em $D$-points}, and denote the set of all $D$-points in $V(K)$ by $(V,s)^\sharp(K)$.
It is an example, the main example we will encounter, of a {\em Kolchin closed} subset of $V(K)$.
In general a Kolchin closed subset of the $K$-points of an algebraic variety is one that is defined Zariski-locally  by the vanishing of $\delta$-polynomials.
Note that when $(V,s)$ is defined over the constants and $s$ is the zero section, $(V,s)^\sharp(K)=V(K^\delta)$.

One of the main consequences of working in an existential closed $\delta$-field is that $(V,s)^\sharp(K)$ is Zariski-dense in $V(K)$.
In particular, for any  subvariety $W\subseteq V$, we have that $W$ is a $D$-subvariety if and only if $W\cap(V,s)^\sharp(K)$ is Zariski dense in $W(K)$.

Suppose $(V,s)$ is $k$-irreducible for some $\delta$-subfield $k$.
If we allow ourselves to pass to a larger existentially closed $\delta$-field, then we can always find a {\em $k$-generic $D$-point of $(V,s)$}, that is, a $D$-point $x\in(V,s)^\sharp(K)$ that is Zariski-generic over~$k$ in~$V$.
Note that such a point is also Kolchin-generic in $(V,s)^\sharp(K)$ over $k$ in the sense that it is not contained in any proper Kolchin closed subset defined over $k$.

In order to ensure the existence of generic $D$-points without having to pass to larger $\delta$-fields, it is convenient to assume that $K$ is already sufficiently large, namely {\em saturated}.
This means that if $k$ is a $\delta$-subfield of strictly smaller cardinality than $K$, and $\mathcal F$ is a collection of Kolchin constructible sets over $k$ every finite subcollection of which has a nonempty intersection, then $\mathcal F$ has a nonempty intersection.

\medskip
\subsection{An analogue of the $\dme$ for $D$-varieties}
\label{geometricddme}
We fix from now on a saturated existentially closed $\delta$-field $K$ of sufficiently great cardinality.
So $K$ serves as a universal domain for $\delta$-algebraic geometry (and hence, in particular, algebraic geometry).
We also fix a small $\delta$-subfield $k\subset K$ that will serve as the field of coefficients.

\begin{definition}[$\delta$-$\dme$ for $D$-varieties]
\label{deltadme-geometric}
Suppose $(V,s)$ is a $D$-variety over $k$.
We say that $(V,s)$ {\em satisfies the $\delta$-$\dme$ over $k$}, if for every $k$-irreducible $D$-subvariety $W\subseteq V$, the following are equivalent:
\begin{itemize}
\item[(i)]
$(W,s)$ is {\em $\delta$-primitive}:
there exists a point $p\in W(k^{\alg})$ that is not contained in any proper $D$-subvariety of $W$ over $k$.
\item[(ii)]
$(W,s)$ is {\em $\delta$-locally-closed}:
it has a maximum proper $D$-subvariety over $k$.
\item[(iii)]
$(W,s)$ is {\em $\delta$-rational}:
$k(W)^\delta\subseteq k^{\alg}$.
\end{itemize}
\end{definition}

\begin{remark}
As the model-theorist will notice, and as we will prove in the next section, $W$ being $\delta$-locally closed means that the Kolchin generic type $p$ of $(W,s)^\sharp(K)$ over $k$ is isolated.
The model-theoretic meaning of $\delta$-rationality is that $p$ is weakly orthogonal to the constants.
On the other hand, it is not clear how to express {\em a priori} the $\delta$-primitivity of $W$ as a model-theoretic property of $p$.
\end{remark}

Without additional assumptions on $k$ there is no hope for the $\delta$-$\dme$ being satisfied.
For example, there are positive-dimensional $\delta$-rational $D$-varieties over any $k$, but if $k$ is differentially closed then the only $\delta$-locally closed $D$-varieties over $k$ are zero-dimensional.
This is because every $D$-variety over a differentially closed field $k$ will have a Zariski dense set of $D$-points over $k$, and so a $D$-subvariety over $k$ containing all of them could not be proper.
We are interested, however, in the case when $k$ is very much not differentially closed; namely, when $\delta$ is trivial on $k$.

\begin{proposition}
\label{knownparts}
For any $k$-irreducible $D$-variety, $\delta$-local-closedness implies $\delta$-primitivity.
Moreover, if $k\subseteq K^\delta$ then $\delta$-primitivity implies $\delta$-rationality.
\end{proposition}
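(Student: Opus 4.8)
The first implication needs no hypothesis on $k$ and is essentially immediate: let $W_0$ be the maximum proper $D$-subvariety of $W$ over $k$. Since $W$ is $k$-irreducible and $W_0\subsetneq W$ is a proper Zariski-closed subset defined over $k$, and since the $k^{\alg}$-points of $W$ are Zariski dense, there is a point $p\in W(k^{\alg})\setminus W_0$. By maximality, every proper $D$-subvariety of $W$ over $k$ is contained in $W_0$, hence avoids $p$, so $p$ witnesses $\delta$-primitivity. (When $\dim W=0$ this just says that a $k$-irreducible $0$-dimensional $D$-variety has no proper $D$-subvariety over $k$ except $\emptyset$.)

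For the second implication I would argue by contraposition. Assume $k\subseteq K^\delta$, so that $\delta$ is in fact trivial on $k^{\alg}$, and suppose $(W,s)$ is not $\delta$-rational. We may assume $\dim W>0$, as otherwise $k(W)\subseteq k^{\alg}$ and $\delta$-rationality is automatic. Since $k(W)^\delta$ is a field containing $k$ and is not contained in $k^{\alg}$, we may pick $f\in k(W)^\delta$ transcendental over $k$; by the correspondence recalled in $\S$\ref{dvar-section} this gives a dominant $D$-rational map $\phi\colon(W,s)\dashrightarrow(\mathbb A^1,0)$ over $k$, which we extend to $\phi\colon(W,s)\dashrightarrow(\mathbb P^1,0)$, where $0$ now denotes the zero section to $T\mathbb P^1$ (it restricts to the zero section in each standard chart, so this makes sense). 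The only role of the hypothesis $k\subseteq K^\delta$ is the following elementary observation: since $\delta$ is trivial on $k^{\alg}$, for any squarefree $g\in k[T]$ the prolongation of $\{g=0\}\subseteq\mathbb A^1$ is the zero section over it; equivalently, every point of $\mathbb P^1$ with coordinate in $k^{\alg}$ is a $D$-point of $(\mathbb P^1,0)$, so any Galois-stable finite set of such points is a $D$-subvariety over $k$.

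It remains to show that every $p\in W(k^{\alg})$ lies in a proper $D$-subvariety of $W$ over $k$, which contradicts $\delta$-primitivity. The one point requiring care is that $p$ need not lie in the domain of $\phi$, so rather than evaluating $\phi$ at $p$ I would pass to the Zariski closure $\Gamma\subseteq W\times\mathbb P^1$ of the graph of $\phi$. Equipping $W\times\mathbb P^1$ with the $D$-structure $s\times0$, the variety $\Gamma$ is a $D$-subvariety over $k$ (the closure of the image of the domain of $\phi$ under the $D$-morphism $(\id,\phi)$), it is $k$-irreducible of dimension $\dim W$, and the two projections $\pi_1\colon\Gamma\to W$ and $\pi_2\colon\Gamma\to\mathbb P^1$ are $D$-morphisms over $k$, with $\pi_1$ proper (as $\mathbb P^1$ is) and birational, hence surjective, and $\pi_2$ dominant. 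Given $p$, choose $q\in\pi_1^{-1}(p)$ with coordinates in $k^{\alg}$ (possible since this fibre is a nonempty variety over $k^{\alg}$), and put $c:=\pi_2(q)$; after possibly replacing $f$ by $1/f$, which is still a transcendental element of $k(W)^\delta$, we may assume $c\in\mathbb A^1(k^{\alg})=k^{\alg}$. Let $g\in k[T]$ be the squarefree minimal polynomial of $c$ over $k$. Then $\{g=0\}\subseteq\mathbb A^1\subseteq\mathbb P^1$ is a $D$-subvariety of $(\mathbb P^1,0)$ over $k$ by the observation above, so $Z:=\pi_2^{-1}(\{g=0\})$ is a $D$-subvariety of $\Gamma$ over $k$ (pull-back of a $D$-subvariety under a $D$-morphism), and $\dim Z<\dim\Gamma=\dim W$ since $\pi_2$ is dominant and $\{g=0\}$ is finite. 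Hence $Z':=\overline{\pi_1(Z)}\subseteq W$ is a $D$-subvariety over $k$ (Zariski closure of the image of a $D$-subvariety under a $D$-morphism), and it is proper because $\dim Z'\le\dim Z<\dim W$; finally $q\in Z$ (as $\pi_2(q)=c$ is a root of $g$), so $p=\pi_1(q)\in\pi_1(Z)\subseteq Z'$.

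I do not expect any genuine obstacle in this argument; the essential content is the triviality of $\delta$ on $k^{\alg}$ making fibres of $D$-morphisms over $k^{\alg}$-points of $(\mathbb P^1,0)$ into $D$-subvarieties over $k$. The only mild subtlety, and the step I would be most careful to get right, is that the $D$-constant is merely a rational map, so the candidate point may lie in its pole or indeterminacy locus; this is why the argument is routed through the graph $\Gamma$ and uses the properness of $\mathbb P^1$ rather than naively evaluating $f$ at $p$.
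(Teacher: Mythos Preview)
Your proof is correct. The first implication matches the paper's exactly; for the second, the paper argues directly rather than by contraposition, but the core idea is the same in both: the preimage of the Galois orbit of a $k^{\alg}$-value of $f$ is a proper $D$-subvariety over $k$, and the hypothesis $k\subseteq K^\delta$ is exactly what makes such a Galois orbit a $D$-subvariety of $(\mathbb A^1,0)$.

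The genuine difference is in how the indeterminacy of $f$ at $p$ is handled. The paper stays in $k[W]$: writing $f=a/b$, the quotient rule gives $b\,\delta a=a\,\delta b$, so either $\delta a=\delta b=0$ (and then $a,b\in k^{\alg}$ by the already-established regular case, since $a,b$ are defined at $p$) or $f=\delta a/\delta b$; iterating, either one terminates with $f\in k^{\alg}$ or $\delta^\ell b(p)=0$ for all $\ell$, forcing $p$ into the $D$-subvariety cut out by the $\delta$-ideal $(b,\delta b,\delta^2 b,\dots)$, which by primitivity would be all of $W$, contradicting $b\neq 0$. Your route is more geometric: compactify the target to $\mathbb P^1$, pass to the closed graph $\Gamma\subseteq W\times\mathbb P^1$, and use properness of $\pi_1$ so that every $p$ acquires a preimage in $\Gamma(k^{\alg})$, sidestepping indeterminacy entirely. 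Your argument is conceptually cleaner and avoids the quotient-rule trick; the paper's is more self-contained, needing no auxiliary variety. One cosmetic wrinkle: you replace $f$ by $1/f$ \emph{after} having fixed $\Gamma$, which changes $\Gamma$; it would read better either to note that the construction is carried out afresh for each $p$, or simply to observe that $\{\infty\}$ is already a $D$-subvariety of $(\mathbb P^1,0)$ over $k$ and treat $c=\infty$ directly.
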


\begin{proof}
Let $(W,s)$ be a $k$-irreducible $D$-variety.

Suppose $(W,s)$ is $\delta$-locally-closed, and denote by~$A$ the maximum proper $D$-subvariety of $W$ over $k$.
Then $p\in W(k^{\alg})\setminus A(k^{\alg})$ witnesses $\delta$-primitivity. This proves the first assertion.

Now suppose that $k\subseteq K^\delta$, $(W,s)$ is $\delta$-primitive, and $p\in W(k^{\alg})$ is not contained in any proper $D$-subvariety over $k$.
Suppose $f\in k(W)$ is a $\delta$-constant.
We want to show that $f\in k^{\alg}$.
We view it as a rational map of $D$-varieties, $f:(W,s)\to (\mathbb A^1,0)$, and suppose for now that it is defined at $p$.
So $f(p)\in \mathbb A^1(k^{\alg})$.
Because of our additional assumption that $k\subseteq K^\delta$, and hence $k^{\alg}\subseteq K^\delta$, we have that $f(p)$ is a $D$-point of $(\mathbb A^1,0)$.
Now, let $\Lambda$ be the orbit of $f(p)$ under the action of the absolute Galois group of $k$.
Then $\Lambda$ is a finite $D$-subvariety of $(\mathbb A^1,0)$ over $k$.
Hence the Zariski closure of $f^{-1}(\Lambda)$ is a $D$-subvariety of $W$ over $k$ that contains $p$.
It follows that $f^{-1}(\Lambda)=W$.
So $f$ is $k^{\alg}$-valued on all of $W$.
We have shown that every element of $k(W)^\delta$ that is defined at $p$ is in $k^{\alg}$.

We have still to deal with the possibility that $f$ is not defined at $p$.
In that case, writing $f=\frac{a}{b}$ with $a,b\in k[W]$, we must have $b(p)=0$.
The fact that $\delta f=0$ implies by the quotient rule that $b\delta a=a\delta b$.
So either $\delta a=\delta b=0$ or $f=\frac{a}{b}=\frac{\delta a}{\delta b}$.
Since $a$ and $b$ are defined at $p$, if $\delta a=\delta b=0$ then $a,b\in k^{\alg}$ by the previous paragraph, and hence $f\in k^{\alg}$.
If, on the other hand, $f=\frac{\delta a}{\delta b}$, then we iterate the argument with $(\delta a, \delta b)$ in place of $(a,b)$.
What we get in the end is that either $f\in k^{\alg}$ or $f=\frac{\delta^\ell a}{\delta^\ell b}$ for all $\ell\geq 0$.
We claim the latter is impossible.
Indeed, it would imply that $\delta^\ell b(p)=0$ for all $\ell$, and so $p$ is contained in the $D$-subvariety $V(I)$ where $I$ is the $\delta$-ideal of $k[W]$ generated by $b$.
But the assumption on $p$ would then imply that $V(I)=W$, contradicting the fact that $b\neq 0$.
So $f\in k^{\alg}$, as desired.
\end{proof}

So the question becomes:

\begin{question}
\label{dme-question}
Under the assumption that $\delta$ is trivial on $k$, for which $D$-varieties does $\delta$-rationality imply $\delta$-local-closedness?
\end{question}

Question~\ref{dme-question} should be, we think, of general interest in differential-algebraic geometry.
In~\cite{pdme} it was pointed out that Manin kernels can be used to construct, in all Krull dimensions at least three, examples that were $\delta$-rational but not $\delta$-locally closed.
Let us point out that in dimension $\leq 2$ the answer is affirmative:

\begin{proposition}
\label{dmeleq2}
If $k\subseteq K^\delta$ then every $D$-variety over $k$ of dimension $\leq 2$ satisfies the $\delta$-$\dme$ over $k$.
\end{proposition}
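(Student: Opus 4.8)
The plan is to first reduce, via Proposition~\ref{knownparts}, to a single implication. Since $k\subseteq K^\delta$, that proposition already gives, for every $k$-irreducible $D$-subvariety $W$ of $V$, that $\delta$-local-closedness $\Rightarrow$ $\delta$-primitivity $\Rightarrow$ $\delta$-rationality; so it suffices to prove that when $\dim W\le 2$, $\delta$-rationality implies $\delta$-local-closedness. Throughout I would use the dictionary recorded in the Remark after Definition~\ref{deltadme-geometric}: writing $p$ for the Kolchin-generic type of $(W,s)^\sharp(K)$ over $k$, the $D$-variety $(W,s)$ is $\delta$-rational iff $p$ is weakly orthogonal to the constants, and $\delta$-locally-closed iff $p$ is isolated; also $U(p)\le\dim W\le 2$. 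Note too that since $W$ is over $k\subseteq K^\delta$, $\tau W=TW$, so there is a zero section, and ``$s$ is the zero section'' is equivalent to the induced derivation on $k(W)$ being trivial, i.e.\ to $k(W)^\delta=k(W)$; hence when $\dim W\ge 1$ the zero-section case is not $\delta$-rational and there is nothing to prove. If $U(p)=0$, equivalently $\dim W=0$, then $W$ is a single Galois orbit over $k$ whose only proper $D$-subvariety over $k$ is $\emptyset$, so it is vacuously $\delta$-locally-closed.

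For $U(p)=1$ I would give a zero-locus argument, which in fact works in any dimension. Assume $(W,s)$ is $\delta$-rational, so $s$ is not the zero section. Since $U(p)=1$, every proper Kolchin closed subset of $(W,s)^\sharp(K)$ over any differential field is finite, hence (taking Zariski closures) every proper $D$-subvariety of $(W,s)$ over any differential field is finite. Let $Z=\{x\in W:s(x)=0\}$, a closed subscheme of $W$ over $k$. If $Z$ contained a curve $C$ — necessarily defined over $k^{\alg}\subseteq K^\delta$, so $\tau C=TC$ — then $C$, with $s{\upharpoonright}_C$ (which is the zero section of $TC$), would be a positive-dimensional $D$-subvariety, contradicting the previous sentence; so $Z$ is finite. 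Since $k^{\alg}\subseteq K^\delta$, a point of $W(k^{\alg})$ is a $D$-point of $(W,s)$ exactly when $s(x)=0$, i.e.\ exactly when it lies in $Z(k^{\alg})$; thus $Z$ is itself a $D$-subvariety over $k$ (it is a finite set of $D$-points), it is proper because $s\ne 0$, and every proper $D$-subvariety of $W$ over $k$ — being finite, hence a union of Galois orbits of $k^{\alg}$-points each of which is forced to be a $D$-point — is contained in $Z$. So $Z$ is the maximum proper $D$-subvariety of $W$ over $k$. This settles all $W$ with $\dim W\le 1$.

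The remaining and main case is $U(p)=2$, so $\dim W=2$ and $(W,s)$ is $\delta$-rational, and I must show $p$ is isolated; here I would pass to the model theory of $\dcf$. As a $D$-variety $(W,s)$ has order one, so for a $k$-generic $D$-point $a$ we have $\delta a=s(a)\in k(a)$, whence the differential field generated by $a$ over $k$ is just $k(a)$ and the semi-minimal (Lascar-rank) analysis of $p=\tp(a/k)$ is realised by $k$-rational maps: one obtains a dominant $D$-rational map $\phi\colon (W,s)\dashrightarrow (W_1,s_1)$ over $k$ onto a one-dimensional $D$-variety with $\tp(\phi(a)/k)$ minimal, whose generic fibre $(W_{\phi(a)},s')$ — a one-dimensional $D$-variety over $k(\phi(a))$ — has generic type $\tp(a/k\phi(a))$ of Lascar rank one. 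A routine forking computation shows that weak orthogonality to the constants passes to both $\tp(\phi(a)/k)$ and $\tp(a/k\phi(a))$. Now I would invoke the key input: an order-one minimal type in $\dcf$ over an algebraically closed differential field that is weakly orthogonal to the constants is isolated. Granting it, $\tp(\phi(a)/k)$ and $\tp(a/k\phi(a))$ are both isolated, and since isolation of types composes ($\tp(ab/k)$ is isolated whenever $\tp(a/k)$ and $\tp(b/ka)$ are), $p=\tp(a/k)$ is isolated, as required.

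The hard part is exactly that ``key input'', and it is here that the hypothesis $\dim W\le 2$ is doing its work. By the Hrushovski--Sokolovi\'c dichotomy a minimal type weakly orthogonal to the constants is either trivial or non-orthogonal to a Manin kernel; but Manin kernels are of order at least two (their defining Manin map is a second-order operator), so an order-one minimal type cannot be non-orthogonal to one and must be trivial — and one must then show that a trivial order-one minimal type over an algebraically closed base has only finitely many algebraic $D$-points over that base, so that (the set of these points being Galois-invariant, hence definable) its generic type is isolated by excising them. It is precisely the Manin-kernel phenomenon, which is of order $\ge 2$ and so can only create counterexamples in ambient dimension $\ge 3$ (via families, as in~\cite{pdme}), that is being excluded. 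Making the ``finitely many algebraic $D$-points'' statement precise, and either proving it or locating a clean citation, is the step I expect to require the most care.
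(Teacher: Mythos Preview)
Your approach diverges from the paper's and, as you suspect, the gap you flag is genuine and cannot be filled by the trichotomy.

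The paper's argument is direct. It invokes the Hrushovski--Jouanolou-type finiteness theorem (cited there as \cite{hrushovski-jouanolou}, with published forms in \cite{pdme} and \cite{freitagmoosa}): $\delta$-rationality of $(W,s)$ forces there to be only finitely many codimension-one $D$-subvarieties over $k$. For $\dim W=2$ the remaining proper $D$-subvarieties over $k$ are zero-dimensional, and those are handled by exactly your zero-locus observation: since $k\subseteq K^\delta$, any $k^{\alg}$-point that is a $D$-point must lie in $\{x:s(x)=0\}$, which is a proper $D$-subvariety by $\delta$-rationality. The maximum proper $D$-subvariety is then the union of this locus with the finitely many codimension-one pieces.

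Your semi-minimal decomposition route runs into the following problem. Your own $U(p)=1$ argument via the zero locus $Z$ uses $k\subseteq K^\delta$ in an essential way (that is how a $D$-point in $W(k^{\alg})$ is forced to satisfy $\bar s=0$); but for the fibre type $\tp(a/k\phi(a))$ the base $k(\phi(a))$ is typically \emph{not} contained in the constants, so that argument does not transfer. You therefore need the ``key input'' over an arbitrary algebraically closed differential field, and there the trichotomy does not finish the job: the non-orthogonal-to-constants case is fine (it is internal, and Fact~\ref{isotrivial} applies), and your order argument excluding Manin kernels is essentially correct, but for the trivial case nothing in the classification bounds the number of $F$-algebraic $D$-points. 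The clean citation you are looking for \emph{is} the Hrushovski--Jouanolou theorem, now applied to a one-dimensional $D$-variety over $F$: $\delta$-rationality gives finitely many codimension-one (hence zero-dimensional) $D$-subvarieties, which is exactly isolation. So your decomposition does not avoid the external input the paper uses; it replaces one appeal to it by two, while also incurring the extra work of realising the Lascar-rank analysis by a $D$-rational map over $k$. A minor correction: for a $D$-variety one has $U(p)=\dim W$, not merely $U(p)\le\dim W$, so your case split is simply on $\dim W$.
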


\begin{proof}
Suppose $(V,s)$ is a $D$-variety over $k$ of dimension at most $2$.
By Proposition~\ref{knownparts} it suffices to show that if $W\subseteq V$ is a $k$-irreducible $\delta$-rational $D$-subvariety over $k$ then it has a maximum proper $D$-subvariety over $k$.
We may assume that $\dim W>0$.
Now, it is a known fact that $\delta$-rationality implies the existence of only finitely many $D$-subavrieties of codimension one over $k$.
Indeed, this is an unpublished theorem of Hrushovski~\cite[Proposition~2,3]{hrushovski-jouanolou}; see~\cite[Theorem~6.1]{pdme} and~\cite[Theorem~4.2]{freitagmoosa} for published generalisations.
So it remains to consider the $0$-dimensional $D$-subvarieties of $W$ over $k$.
But as $k\subseteq K^\delta$, the union of these is contained in the Zariski closure $X$ of $(W,s)^\sharp(K)\cap W(K^\delta)$.
Note that $s$ restricts to the zero section on $X$, and hence $X$ is a $D$-subvariety of $W$ over $k$ that must be proper by $\delta$-rationality of $(W,s)$.
So the union of $X$ and the finitely many codimension one $D$-subvarieties of $W$ form the maximal proper $D$-subvariety over $k$.
\end{proof}

We will give a sufficient condition for $\delta$-rationality to imply $\delta$-local-closedness, and hence for the $\delta$-$\dme$, having to do with analysability to the constants in the model theory of differentially closed fields.
We will then use this condition  to prove the $\delta$-$\dme$ for $D$-groups over the constants.

\medskip
\subsection{Maximum $D$-subvarieties}
Here we look closer at which $D$-varieties over~$k$ have a proper $D$-subvariety over~$k$ that contains all other proper $D$-subvarieties over~$k$.
This is something that never happens in the pure algebraic geometry setting: every variety over $k$ has a Zariski dense set of $k^{\alg}$-points, and each $k^{\alg}$-point is contained in a finite subvariety defined over $k$.
So a $k$-irreducible variety cannot have a maximum proper subvariety over~$k$.
In the enriched context of $D$-varieties there will be many $D$-points over a differential closure of $k$, say $\widetilde k$, but a $\widetilde k$-point need not live in a proper $D$-subvariety defined over $k$.
So $D$-points are not an obstacle to the existence of a maximum proper $D$-subvariety.
In fact, as the following lemma  points out, the existence of a maximum proper $D$-subvariety is a natural property to consider from both the Kolchin topological and model-theoretic points of view.

We continue to work in our sufficiently saturated differentially closed field $(K,\delta)$, and fix a $\delta$-subfield $k$ of coefficients.

\begin{lemma}
\label{equivisolated}
Suppose $(V,s)$ is a $k$-irreducible $D$-variety.
The following are equivalent.
\begin{itemize}
\item[(i)]
$(V,s)$ is $\delta$-locally closed.
\item[(ii)]
$(V,s)$ has finitely many maximal proper $k$-irreducible $D$-subvarieties.
\item[(iii)]
$\Lambda:=(V,s)^\sharp(K)\setminus\bigcup\{(W,s)^\sharp(K):W\subsetneq V\text{ $D$-subvariety over $k$}\}$ is Kolchin constructible.
\item[(iv)]
The Kolchin generic type of $(V,s)^\sharp(K)$ over $k$ is an isolated type.
\end{itemize}
\end{lemma}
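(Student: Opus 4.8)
The plan is to prove the chain $(i)\Rightarrow(ii)\Rightarrow(iii)\Rightarrow(iv)\Rightarrow(i)$, translating freely between the geometric language of $D$-subvarieties over $k$ and the model-theoretic language of Kolchin closed sets and types over $k$ in the saturated model $(K,\delta)$. The key dictionary entry, which I would establish first, is that proper $D$-subvarieties of $(V,s)$ over $k$ correspond precisely to proper Kolchin closed subsets of $(V,s)^\sharp(K)$ defined over $k$: this is essentially the Zariski-density of $D$-points noted in the preliminaries (a subvariety $W\subseteq V$ is a $D$-subvariety iff $W\cap(V,s)^\sharp(K)$ is Zariski dense in $W$), together with the fact that Kolchin closed subsets of an affine variety are cut out Zariski-locally by $\delta$-polynomials, so the Zariski closure of a Kolchin closed subset of $(V,s)^\sharp(K)$ is a $D$-subvariety, and conversely $(W,s)^\sharp(K)$ is Kolchin closed in $(V,s)^\sharp(K)$. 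Under this correspondence the $k$-irreducible maximal proper $D$-subvarieties over $k$ match the $k$-irreducible components of the largest proper Kolchin closed subset over $k$, when such a thing exists.

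For $(i)\Rightarrow(ii)$: if $A$ is the maximum proper $D$-subvariety of $V$ over $k$, then every $k$-irreducible proper $D$-subvariety is contained in $A$, hence in one of the finitely many $k$-irreducible components of $A$; so there are at most finitely many maximal ones and they are among the components of $A$. For $(ii)\Rightarrow(iii)$: if $W_1,\dots,W_n$ are the finitely many maximal proper $k$-irreducible $D$-subvarieties, then every proper $D$-subvariety over $k$ has all its $k$-irreducible components contained in some $W_i$ (a proper $D$-subvariety over $k$ need not be $k$-irreducible, but each of its $k$-irreducible components sits inside a maximal one), so the union appearing in the definition of $\Lambda$ equals $\bigcup_i (W_i,s)^\sharp(K)$, a Kolchin closed set over $k$; hence $\Lambda = (V,s)^\sharp(K)\setminus\bigcup_i(W_i,s)^\sharp(K)$ is Kolchin constructible (indeed relatively Kolchin open in $(V,s)^\sharp(K)$). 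For $(iii)\Rightarrow(iv)$: the Kolchin generic type $p$ of $(V,s)^\sharp(K)$ over $k$ is, by definition, the type of a point lying in no proper Kolchin closed subset over $k$, i.e.\ a point of $\Lambda$; so $\Lambda$ is precisely the set of realisations of $p$ in $(V,s)^\sharp(K)$, and a type whose realisation set inside a Kolchin closed set is itself Kolchin constructible is isolated (a constructible set over $k$ all of whose points have the same type is defined by a formula isolating that type). For $(iv)\Rightarrow(i)$: if $p$ is isolated, say by a Kolchin constructible set $U$ over $k$, then $(V,s)^\sharp(K)\setminus U$ is a Kolchin constructible, in fact Kolchin closed (being the complement of the open set of realisations of the generic type, within the irreducible Kolchin closed set $(V,s)^\sharp(K)$) proper subset over $k$ containing all proper Kolchin closed subsets over $k$; taking Zariski closures, its Zariski closure is the maximum proper $D$-subvariety of $V$ over $k$.

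The main obstacle I anticipate is bookkeeping around irreducibility and fields of definition: the definition of $\delta$-local-closedness quantifies over \emph{all} proper $D$-subvarieties over $k$, not merely the $k$-irreducible ones, so I must be careful that a proper $D$-subvariety over $k$ decomposes into $k$-irreducible $D$-subvarieties over $k$ (which it does, since taking $k$-irreducible components is a $k$-rational operation and each component inherits the section $s$), and dually that the union of the finitely many maximal $k$-irreducible proper $D$-subvarieties is itself a $D$-subvariety over $k$ and is proper (properness because $(V,s)^\sharp(K)$ is Zariski dense in $V$ and, by $k$-irreducibility plus saturation, contains a point generic over $k$, which lies in none of the $W_i$). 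A second, more technical point is the clean passage between ``Kolchin constructible'' and ``isolated type'': here I would invoke the saturation of $K$ and the compactness/omitting-types machinery for $\dcf$, namely that in a saturated model a complete type over a small set $k$ is isolated exactly when its solution set is definable over $k$, applied relative to the ambient definable set $(V,s)^\sharp(K)$. Everything else is straightforward once the dictionary is in place.
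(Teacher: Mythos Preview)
Your approach is the same cycle $(i)\Rightarrow(ii)\Rightarrow(iii)\Rightarrow(iv)\Rightarrow(i)$ as the paper's, and your dictionary between proper $D$-subvarieties over $k$ and proper Kolchin closed subsets of $(V,s)^\sharp(K)$ over $k$ is exactly the right setup; steps $(i)\Rightarrow(ii)$, $(ii)\Rightarrow(iii)$, and $(iii)\Rightarrow(iv)$ match the paper essentially verbatim.

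There is, however, a genuine gap in your $(iv)\Rightarrow(i)$. You assert that $(V,s)^\sharp(K)\setminus U$ is ``in fact Kolchin closed (being the complement of the open set of realisations of the generic type, within the irreducible Kolchin closed set $(V,s)^\sharp(K)$)''. But this is circular: the set of realisations of the generic type is $\Lambda$, and the content of $(iv)\Rightarrow(i)$ is precisely that $\Lambda$ is Kolchin \emph{open} in $(V,s)^\sharp(K)$ (equivalently, that its complement $A=\bigcup_W(W,s)^\sharp(K)$ is closed). Isolation of $p$ together with quantifier elimination gives only that $\Lambda$ is Kolchin \emph{constructible}; a constructible set need not be open, and an arbitrary union of closed sets need not be closed. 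The paper closes this gap with saturation: once $\Lambda$ is constructible, so is $A$, and since $A$ is a union indexed by the small set of proper $D$-subvarieties over $k$, compactness in the saturated model forces $A$ to be a \emph{finite} such union, hence Kolchin closed; its Zariski closure is then the maximum proper $D$-subvariety. You do invoke saturation in your final paragraph, but you apply it to the equivalence ``isolated $\Leftrightarrow$ solution set definable'', which only recovers constructibility of $\Lambda$; the place saturation is actually needed is to upgrade constructibility of $A$ to closedness.
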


\begin{proof}
(i)$\implies$(ii).
Let $W$ be the maximum proper $D$-subvariety over $k$.
The $k$-irreducible components of $W$ are $D$-subvarieties of $V$, see for example~\cite[Theorem~2.1]{kaplansky}.
Every proper $k$-irreducible $D$-subvariety of $V$ is contained in one of these components.
So the maximal proper $k$-irreducible $D$-subvarieties of $V$ are precisely the $k$-irreducible components of $W$.

(ii)$\implies$(iii).
Let $W_1,\dots, W_\ell$ be the maximal proper $k$-irreducible $D$-subvarieties of $V$.
Then
$$\bigcup\{(W,s)^\sharp(K):W\subsetneq V\text{ $D$-subvariety over $k$}\}=(W_1,s)^\sharp(K)\cup\dots\cup(W_\ell,s)^\sharp(K).$$

(iii)$\implies$(iv).
The Kolchin generic type of $(V,s)^\sharp(K)$ over $k$ is the complete type $p(X)$ in $\dcf$ axiomatised by the formulas saying that ``$X\in (V,s)^\sharp(K)$", and, for each proper Kolchin closed subset $A$ of $(V,s)^\sharp(K)$ over $k$, the formula ``$X\not\in A$".
Note that as $(V,s)^\sharp(K)$ is defined by $\nabla(X)=s(X)$, the occurrences of each $\delta X$ in the defining equations of $A$ can be replaced by polynomials, so that $A=A^{\operatorname{Zar}}\cap(V,s)^\sharp(K)$, where $A^{\operatorname{Zar}}$ denotes the Zariski closure of $A$ in $V$.
When $A$ is over $k$, we have that $A^{\operatorname{Zar}}$ is a $D$-subvariety of $V$ over $k$.
It follows that the set of realisations of $p$ is precisely $\Lambda$, so that $\Lambda$ being Kolchin constructible implies that $p$ is axiomatised by a single formula, that is, it is isolated.

(iv)$\implies$(i).
Let $\Lambda$ be as in statement~(iii).
As we have seen, this is the set of realisations of the Kolchin generic type of $(V,s)^\sharp(K)$ over $k$.
The latter being isolated implies, by quantifier elimination, that $\Lambda$ is Kolchin constructible.
By saturation, this in turn implies that
$A:=\bigcup\{(W,s)^\sharp(K):W\subsetneq V\text{ $D$-subvariety over $k$}\}$ is a finite union, and hence is itself a proper Kolchin closed subset over $k$.
Then $A^{\operatorname{Zar}}$ is the maximum proper $D$-subvariety over $k$.
\end{proof}

The following lemma will be useful in showing that certain $D$-varieties satisfy the equivalent conditions of Lemma~\ref{equivisolated}.

\begin{lemma}
\label{basefibre}
Suppose $f:(V,s)\to (W,t)$ is a dominant $D$-rational map of $k$-irreducible $D$-varieties over $k$.
The following are equivalent
\begin{itemize}
\item[(i)]
$(V,s)$ has a maximum proper $D$-subvariety over $k$.
\item[(ii)]
$(W,t)$ has a maximum proper $D$-subvariety over $k$, and for some (equivalently every) $k$-generic $D$-point $\eta$ of $W$, the fibre $V_\eta:=f^{-1}(\eta)^{\operatorname{Zar}}$ has a maximum proper $D$-subvariety over $k(\eta)$.
\end{itemize}
\end{lemma}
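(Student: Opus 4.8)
The plan is to reduce the statement about $V$ to a statement about the base $W$ and a generic fibre, using the characterisation of $\delta$-local-closedness in terms of the generic type being isolated (Lemma~\ref{equivisolated}(iv)). Let $p$ be the Kolchin generic type of $(V,s)^\sharp(K)$ over $k$, and let $q=f_*p$ be its push-forward, which is the Kolchin generic type of $(W,t)^\sharp(K)$ over $k$ since $f$ is dominant. Let $\eta\in(W,t)^\sharp(K)$ realise $q$; then $\eta$ is a $k$-generic $D$-point of $W$, and a Kolchin generic $D$-point $a$ of $V$ lying over $\eta$ is precisely a realisation of $p$ extending $q$, and $a$ is a $k(\eta)$-generic $D$-point of the fibre $V_\eta=f^{-1}(\eta)^{\operatorname{Zar}}$. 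The key observation is then the standard transitivity of isolation: $p$ is isolated over $k$ if and only if $q$ is isolated over $k$ and $\tp(a/k\eta)$ is isolated over $k(\eta)$.

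More carefully, I would argue both directions as follows. For (i)$\implies$(ii): assuming $p$ is isolated over $k$, the push-forward $q$ is the image of $p$ under the definable map induced by $f$, hence isolated over $k$, giving the first half of (ii) via Lemma~\ref{equivisolated}; and then, since $p$ is isolated and $p\vdash q$, the type $\tp(a/k\eta)$ over $k\eta$ is isolated (an isolating formula for $p$ together with $\tp(\eta/k)$ isolates it over $k\eta$), which by Lemma~\ref{equivisolated} applied to the $D$-variety $V_\eta$ over the field $k(\eta)$ says $V_\eta$ has a maximum proper $D$-subvariety over $k(\eta)$. The independence of the choice of $\eta$ among $k$-generic $D$-points of $W$ is automatic because any two such realise the same type $q$ over $k$, and isolation of $\tp(a/k\eta)$ is a property of that type. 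For (ii)$\implies$(i): given $q$ isolated over $k$ and $\tp(a/k\eta)$ isolated over $k(\eta)$ for one (equivalently every) such $\eta$, transitivity of isolation yields that $p=\tp(a/k)$ is isolated over $k$, so $(V,s)$ is $\delta$-locally closed by Lemma~\ref{equivisolated}.

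A couple of points need care. First, one must make sure the interaction between "$k$-generic $D$-point of $W$" and "type $q$" is clean: a $k$-generic $D$-point of $W$ is by definition a $D$-point Zariski-generic over $k$, and since $(W,t)^\sharp(K)$ is Zariski dense in $W$, being Zariski-generic in $W$ over $k$ is the same as being Kolchin-generic in $(W,t)^\sharp(K)$ over $k$, i.e.\ realising $q$; by saturation of $K$ such a point exists inside $K$. Second, to invoke Lemma~\ref{equivisolated} for the fibre $V_\eta$ one needs that $(V_\eta, s{\upharpoonright}_{V_\eta})$ is a $k(\eta)$-irreducible $D$-variety; irreducibility of the generic fibre of a dominant rational map is standard, and $f$ being a $D$-morphism makes $s$ restrict to a section of $\tau V_\eta$ over $k(\eta)$ (here one uses that $\eta$ is a $D$-point so that $\tau(V_\eta)$ is the fibre of $\tau f$ over $s(\eta)=\nabla(\eta)$, matching the prolongation of the fibre).

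I expect the main obstacle to be not any single hard step but the bookkeeping around transitivity of isolation in $\dcf$ and its translation back and forth across Lemma~\ref{equivisolated}: specifically, verifying carefully that the Kolchin generic type $p$ of $(V,s)^\sharp(K)$ restricts, along $f$, to the Kolchin generic type of the generic fibre over $k(\eta)$, and that "isolated over $k(\eta)$" in the model-theoretic sense matches "$\delta$-locally-closed over $k(\eta)$" in the geometric sense — this requires knowing that a $D$-subvariety of $V_\eta$ over $k(\eta)$ is the same datum as (the generic fibre of) a $D$-subvariety of $V$ over $k$ dominating $W$, which is where the dominance hypothesis on $f$ is really used. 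Once that dictionary is in place, the purely model-theoretic content is the routine transitivity of isolation, and the rest is the translation via Lemma~\ref{equivisolated}.
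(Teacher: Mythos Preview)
Your proposal is correct and follows essentially the same approach as the paper: translate via Lemma~\ref{equivisolated} into the statement that the Kolchin generic type is isolated, then use that isolation passes to definable images, to extensions over $k(\eta)$, and is transitive. The paper's proof is terser and omits the bookkeeping you flag (irreducibility of the generic fibre, the precise match between generic $D$-points and realisations of the generic type), but the underlying model-theoretic argument is identical.
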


\begin{proof}
We show how this follows easily from basic properties of isolated types, leaving it to the reader to make the straightforward, but rather unwieldy, translation it into an algebro-geometric argument if desired.

(i)$\implies$(ii).
Suppose $a\in (V,s)^\sharp(K)$ is a $k$-generic $D$-point.
Since $f$ is a dominant $D$-rational map, $f(a)\in(W,T)^\sharp(K)$ is also $k$-generic.
By characterisation~(iv) of the previous lemma, $\tp(a/k)$ is isolated.
As $f(a)$ is in the definable closure of $a$ over $k$, it follows that $\tp(f(a)/k)$ is isolated.
Hence, $(W,t)$ has a maximum proper $D$-subvariety over $k$.

Now fix $\eta\in (W,t)^\sharp(K)$ a $k$-generic $D$-point,
and let $a$ be a $k(\eta)$-generic $D$-point of the fibre $V_\eta$.
Then $a$ is $k$-generic in $(V,t)$, and hence $\tp(a/k)$ is isolated.
It follows that the extension $\tp(a/k(\eta))$ is also isolated.
So $V_\eta$ has a maximum proper $D$-subvariety over $k(\eta)$.

(ii)$\implies$(i).
Fix $\eta\in (W,t)^\sharp(K)$ a $k$-generic $D$-point such that $V_\eta$ has a maximum proper $D$-subvariety over $k(\eta)$.
Let $a$ be a $k(\eta)$-generic $D$-point of $V_\eta$.
So $\tp(a/k(\eta))$ and $\tp(\eta/k)$ are both isolated, implying that $\tp(a/k)$ is isolated.
Since $a\in(V,s)^\sharp(K)$ is $k$-generic, condition (i) follows.
\end{proof}

\medskip
\subsection{Compound isotriviality}
\label{ci-section}
Our sufficient condition for $\delta$-rationality to imply $\delta$-local-closedness will come from looking at isotrivial $D$-varieties.

\begin{definition}
An irreducible $D$-variety $(V,s)$ over $k$ is said to be {\em isotrivial} if there is some $\delta$-field extension $F\supseteq k$ such that $(V,s)$ is $D$-birationally equivalent over $F$ to a $D$-variety of the form $(W,0)$ where $W$ is defined over the constants $F^\delta$ and $0$ is the zero section.
We will say that a possibly reducible $D$-variety is isotrivial if every irreducible component is.
\end{definition}

This definition comes from model theory, it is a geometric translation of the statement that the Kolchin generic type of $(V,s)^\sharp(K)$ over $k$ is $K^\delta$-internal.
Note that there is some tension, but no inconsistency, between isotriviality and $\delta$-rationality; for example, $(W,0)$ is far from being $\delta$-rational, instead of there being no new $\delta$-constants in the rational function field we have that $\delta$ is trivial on all of $k(W)$.
The reasons these notions are not inconsistent is that the isotrivial $(V,s)$ is only of the form $(W,0)$ after base change -- that is, over additional parameters -- and that makes all the difference.

\begin{fact}
\label{isotrivial}
A $k$-irreducible $D$-variety that is at once both $\delta$-rational and isotrivial must be $\delta$-locally closed.
\end{fact}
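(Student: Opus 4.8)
The plan is to work model-theoretically, using the correspondence set up earlier between $\delta$-rationality and weak orthogonality to the constants, between $\delta$-local-closedness and isolation of the Kolchin generic type (Lemma~\ref{equivisolated}), and between isotriviality and $K^\delta$-internality. So let $(V,s)$ be a $k$-irreducible $D$-variety that is both $\delta$-rational and isotrivial, and let $p$ be the Kolchin generic type of $(V,s)^\sharp(K)$ over $k$. Isotriviality says $p$ is $K^\delta$-internal; $\delta$-rationality says $p$ is weakly orthogonal to $K^\delta$, i.e. $p$ has a unique extension to $k\cup K^\delta$ — equivalently, if $a\models p$ then $a$ is independent from $K^\delta$ over $k$ and $\dcl(k\cup K^\delta\cup\{a\})\cap K^\delta=\acl(k)\cap K^\delta$. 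The goal is to conclude that $p$ is isolated.

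The key step is the standard binding-group mechanism. Since $p$ is $K^\delta$-internal, there is a $\delta$-finitely-generated extension $F\supseteq k$ and an $F$-definable (in the structure with constants named) finite tuple from $K^\delta$ such that every realisation of $p$ is in $\dcl(F\cup K^\delta)$; the automorphism group of the set of realisations of $p$ fixing $F\cup K^\delta$ pointwise is then a definable group $\mathcal{G}$ acting definably and transitively on the realisations of $p|_F$ (transitivity because $p$ is stationary over its algebraic-closure base, or one passes to $\acl(k)$). This group $\mathcal{G}$ lives in the pure algebraically closed field $K^\delta$, so it is (the $K^\delta$-points of) an algebraic group, and in particular its generic type over any parameters is isolated — algebraically closed fields are $\aleph_0$-stable, even strongly minimal-by-parts, so all types over finitely generated parameter sets are isolated. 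The transitive action then transports isolation: if $b$ is a generic element of $\mathcal{G}$ over $F$ and $a_0$ is a fixed realisation of $p|_F$, then $a:=b\cdot a_0\models p|_F$ and $\tp(a/F)$ is isolated because $\tp(b/F)$ is and $a\in\dcl(F\cup K^\delta\cup\{a_0\})$ — more carefully, one uses that $a$ and $b$ are interdefinable over $F\cup\{a_0\}$ and that $\tp(a_0/F)$ together with $\tp(b/F,a_0)$ is isolated. Hence $p|_F$ is isolated.

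It remains to descend from $F$ to $k$, and this is where weak orthogonality does the work and is, I expect, the main obstacle to state cleanly. The point is that $F$ was an arbitrary $\delta$-finitely-generated field over which the internality data is defined, and one must arrange that passing from $k$ to $F$ does not destroy isolation. The clean route: weak orthogonality of $p$ to $K^\delta$ means $p$ does not fork over $\acl(k)$ when we add any subset of $K^\delta$; combined with $K^\delta$-internality, a result in the stability-theoretic literature (the existence of the binding group can in fact be arranged over $\acl(k)$ itself when $p\perp^w K^\delta$, because then a single realisation-independent copy of the internality parameters can be taken inside $\acl(k)$, up to the finite ambiguity) shows that $\mathcal{G}$ and its action are already $\acl(k)$-definable. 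Then the argument above gives that $p$ is isolated over $\acl(k)$, and since $[\,\acl(k):k\,]$ is finite in the relevant sense (the generic type over $\acl(k)$ has finitely many conjugates over $k$, whose union is the realisation set of $p$ over $k$), $p$ is isolated over $k$. The honest statement of this descent — spelling out exactly why weak orthogonality lets the binding group be taken over $\acl(k)$ rather than some larger $F$ — is the part that needs care; everything else is bookkeeping with isolated types and the translation back to the language of $D$-subvarieties via Lemma~\ref{equivisolated}.
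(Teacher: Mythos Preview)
Your overall strategy matches the paper's: translate isotriviality to $K^\delta$-internality, $\delta$-rationality to weak orthogonality to $K^\delta$, and $\delta$-local-closedness to isolation of the Kolchin generic type, then use the binding group. But the execution has two genuine errors.

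First, the sentence ``its generic type over any parameters is isolated'' is false. In the pure algebraically closed field $K^\delta$, the generic type of (say) $\mathbb{G}_a$ over any set of parameters is the type of a transcendental element, axiomatised by infinitely many inequations; it is \emph{not} isolated. So your mechanism for ``transporting isolation'' from a generic $b\in\mathcal{G}$ to $a=b\cdot a_0$ does not get off the ground. The correct argument, and the one the paper uses, is simpler: once $\mathcal{G}$ is a \emph{definable} group (not merely type-definable---this is where $\omega$-stability of the induced structure on $K^\delta$ enters) acting definably and transitively on the realisations of $q$, the set of realisations is the orbit of a single point under a definable map, hence a definable set; so $q$ is isolated outright. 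No appeal to isolation of $\tp(b/F)$ is needed or available.

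Second, you have the role of weak orthogonality backwards. The binding group $\operatorname{Aut}(q/k^{\alg}(K^\delta))$ is type-definable over $k^{\alg}$ by general stability theory for any internal type; weak orthogonality is not what brings it down to $\acl(k)$. What weak orthogonality buys you is \emph{transitivity} of the action on realisations of $q$: if $a,a'\models q$ then weak orthogonality forces $\tp(a/k^{\alg}K^\delta)=\tp(a'/k^{\alg}K^\delta)$, so an automorphism fixing $k^{\alg}K^\delta$ sends $a$ to $a'$. The paper avoids your detour through an auxiliary $F$ entirely by passing to $k^{\alg}$ at the outset (reducing isolation of $p$ to isolation of each of its finitely many extensions $q$ to $k^{\alg}$), taking the binding group there, and running the definable-orbit argument above.
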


\begin{proof}
Suppose $(V,s)$ is a $k$-irreducible isotrivial $D$-variety with $k(V)^\delta\subseteq k^{\alg}$.
We want to show that $V$ has a maximum proper $D$-subvariety over $k$.
The proof we give makes essential use of model theory.
We will show how the statement translates to the well-known fact that a type internal to the constants but weakly orthogonal to the constants is isolated.

Let $p$ be the Kolchin generic type of $(V,s)^\sharp(K)$ over $k$.
By Lemma~\ref{equivisolated}, it suffices to show that $p$ is isolated.
That in turn reduces to showing that every extension of $p$ to $k^{\alg}$ is isolated.
Fix $q$ an extension of $p$ to~$k^{\alg}$.
So $q$ is the Kolchin generic type of $(\widehat V,s)^\sharp(K)$ over $k^{\alg}$, for some irreducible component $\widehat V$ of $V$.
Isotriviality of $(V,s)$ implies isotriviality of $(\widehat V,s)$,  and this means that $q$ is internal to the constants $K^\delta$, see for example~\cite[Fact~2.6]{kowalskipillay}.
By stability, this implies that the binding group $\mathcal G=\operatorname{Aut}(q/k^{\alg}(K^\delta))$ is type-definable over $k^{\alg}$, see for instance \cite[Appendix B]{Hrushovski}.
In fact, $\mathcal G$ is definable: $\mathcal G$ lives in the constants and by $\omega$-stability of the induced structure on the constants, every type-definable group in $K^\delta$ is a definable group.
So we have a definable group acting definably on the set of realisations of $q$.

On the other hand, for all $a\models q$ we have that 
$$k^{\alg}(a)^\delta\subseteq (k(a)^{\alg})^\delta= (k(a)^\delta)^{\alg}\subseteq k^{\alg},$$
where the last containment uses our assumption on $k(V)=k(a)$.
This shows that $q$ is weakly orthogonal to $K^\delta$. 
So the action of $\mathcal G$ on the set of realisations of $q$ is transitive.
As $\mathcal G$ is definable, the set of realisations of $q$ must be definable -- that is, $q$ is isolated.
\end{proof}

Using Lemma~\ref{basefibre} we can extend Fact~\ref{isotrivial} to the case of $D$-varieties that are built up by a finite sequence of fibrations by isotrivial $D$-subvarieties.

\begin{definition}
An irreducible $D$-variety $(V,s)$ over $k$ is said to be {\em compound isotrivial} if there exists a sequence of irreducible $D$-varieties $(V_i,s_i)$ over $k$, for $i=0,\dots,\ell$, with dominant $D$-rational maps over $k$
$$
\xymatrix{
V=V_0\ar[r]^{ \ \ \ f_0}&V_1\ar[r]^{f_1}&\cdots\ar[r]&V_{\ell-1}\ar[r]^{f_{\ell-1} \ \ \ }&V_\ell=0}
$$
where $0$ denotes an irreducible zero-dimensional $D$-variety,
and such that the generic fibres of each $f_i$ are isotrivial.
That is, for each $i=0,\dots,\ell-1$, if $\eta$ is a $k$-generic $D$-point in $V_{i+1}$, then $f_i^{-1}(\eta)^{\operatorname{Zar}}$, which is a $k(\eta)$-irreducible $D$-subvariety of $(V_i,s_i)$, is isotrivial.
We say $(V,s)$ is compound isotrivial in {\em $\ell$ steps}.
\end{definition}

While isotriviality is equivalent to the Kolchin generic type being internal to the constants, compound isotriviality corresponds to that type being {\em analysable} in the constants.
As this is a less familiar notion, even among model theorists, we spell out the equivalence here.

\begin{lemma}
\label{cianal}
Suppose $(V,s)$ is an irreducible $D$-variety over $k$, and $a\in (V,s)^\sharp(K)$ is a $k$-generic $D$-point.
Then $(V,s)$ is compound isotrivial if and only if the type of $a$ over $k$ in $\dcf$ is analysable in $K^\delta$.
\end{lemma}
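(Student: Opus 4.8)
The plan is to move between the geometry of fibrations and the model-theoretic notion of an analysis by means of two dictionaries. The first is the standard correspondence between finitely generated $\delta$-field extensions of a $\delta$-field $F$ and irreducible $D$-varieties over $F$ (taken up to $D$-birational equivalence), under which the $\delta$-function field corresponds to an $F$-generic $D$-point together with its Kolchin generic type over $F$. The second, recorded in the discussion following the definition of isotriviality and resting on the internality calculus for $\dcf$ (cf.\ \cite[Fact~2.6]{kowalskipillay}), is that such a $D$-variety over $F$ is isotrivial exactly when that generic type is $K^\delta$-internal; since $K^\delta$-internality of $\tp(b/F)$ depends only on the $\delta$-field extension $F\langle b\rangle/F$, we may equally speak of a finitely generated $\delta$-field extension being internal to the constants.

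For the direction from compound isotriviality to analysability, suppose $(V,s)$ is compound isotrivial, witnessed by a tower $V=V_0\to V_1\to\cdots\to V_\ell=0$ of dominant $D$-rational maps $f_i\colon V_i\to V_{i+1}$ over $k$ with isotrivial generic fibres. Set $a_0:=a$ and $a_{i+1}:=f_i(a_i)$. Since each $f_i$ is dominant $D$-rational and $a$ is $k$-generic, each $a_i$ is a $k$-generic $D$-point of $V_i$; as $V_\ell$ is absolutely irreducible and $0$-dimensional, $a_\ell\in\dcl(k)$. Transcendence degree being additive along dominant maps, $a_i$ is a $k(a_{i+1})$-generic $D$-point of the fibre $f_i^{-1}(a_{i+1})^{\operatorname{Zar}}$, which is isotrivial over $k(a_{i+1})$; hence $\tp\!\big(a_i/k(a_{i+1})\big)$ is $K^\delta$-internal. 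Because $a_{j+1},\dots,a_\ell\in\dcl(ka_j)$ -- each being the image of the previous under some $f_i$ -- we have $\dcl(k,a_\ell,\dots,a_{j+1})=\dcl(ka_{j+1})$, so $\tp(a_j/k,a_\ell,\dots,a_{j+1})$ is $K^\delta$-internal for every $j$, and as $a\in\dcl(ka_0)$ this exhibits $a_\ell,a_{\ell-1},\dots,a_0$ as an analysis of $\tp(a/k)$ in $K^\delta$.

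For the converse, suppose $\tp(a/k)$ is analysable in $K^\delta$. By a standard fact about analysable types in the $\omega$-stable theory $\dcf$, one may choose the analysis to lie inside $\dcl(ka)$: there are $b_1,\dots,b_n\in\dcl(ka)$ with $a\in\dcl(kb_n)$ and with $\tp(b_i/k,b_1,\dots,b_{i-1})$ being $K^\delta$-internal for each $i$. Put $L_0:=k$ and $L_i:=k\langle b_1,\dots,b_i\rangle$, a finitely generated $\delta$-field over $k$; since $b_1,\dots,b_n\in\dcl(ka)=k\langle a\rangle$ and $a\in L_n$, and since $\delta a\in k(a)$ forces $k\langle a\rangle=k(V)$, we get $k=L_0\subseteq L_1\subseteq\cdots\subseteq L_n=k(V)$. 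Using the first dictionary, realise this chain by a tower of dominant $D$-rational maps $W_n\to W_{n-1}\to\cdots\to W_0$ over $k$, where $W_i$ has $\delta$-function field $L_i$, so that $W_0=0$ and $W_n=(V,s)$. For a $k$-generic $D$-point $\eta$ of $W_{i-1}$ -- the choice being immaterial by $k$-conjugacy -- the generic fibre over $\eta$ is a $k(\eta)$-irreducible $D$-variety whose $\delta$-function field over $k(\eta)\cong L_{i-1}$ is $L_i$; by the second dictionary it is isotrivial, since $L_i/L_{i-1}$ is internal to the constants, equivalently $\tp(b_i/k,b_1,\dots,b_{i-1})$ is $K^\delta$-internal. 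Hence $V=W_n\to W_{n-1}\to\cdots\to W_0=0$ witnesses that $(V,s)$ is compound isotrivial.

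The main obstacle, and the only point that is not routine bookkeeping with the two dictionaries, is the reduction in the converse to an analysis lying in $\dcl(ka)$. A priori an analysis of $\tp(a/k)$ in $K^\delta$ may use external parameters, and one must replace it by the canonical $K^\delta$-analysis, whose successive layers are the codes of the maximal $K^\delta$-internal quotients and so do lie in $\dcl(ka)$; this step uses $\omega$-stability (boundedness and stationarity) essentially. It is also exactly the step that makes the tower terminate at $(V,s)$ itself rather than at some $D$-variety merely dominating it: without it one only learns that $\tp(a/k)$ is a dominant $D$-rational image of the generic type of a compound isotrivial $D$-variety. Once this reduction is in hand, the remaining ingredients -- additivity of transcendence degree along dominant maps, the fact that the image of a $k$-generic $D$-point is again $k$-generic, and the coincidence of types over a $\delta$-field and over its definable closure -- are elementary.
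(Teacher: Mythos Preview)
Your proof is correct and follows essentially the same strategy as the paper: in both directions one passes between a tower of dominant $D$-rational maps with isotrivial generic fibres and a chain of tuples $a=a_0,a_1,\dots,a_\ell$ with each successive type $K^\delta$-internal. The forward direction is identical to the paper's.

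There is one point worth flagging. The paper \emph{defines} analysability in $K^\delta$ (inside the proof) so that the analysis already lies in the $\delta$-field generated by $a$ over $k$: the tuples $a_i$ are required to satisfy $a_i\in k\langle a_{i-1}\rangle$. With that definition your ``main obstacle'' evaporates, and indeed the paper's converse is a two-line argument: replace each $a_i$ by a sufficiently long jet $(a_i,\delta a_i,\dots,\delta^n a_i)$ so that it becomes a $k$-generic $D$-point of some irreducible $D$-variety $V_i$, and observe that conjugacy of the absolutely irreducible components of the generic fibre over $k(a_{i+1})$ propagates isotriviality from one component to all. Your invocation of a canonical $K^\delta$-analysis via $\omega$-stability is correct but unnecessary here; it would only be needed if one started from a weaker notion of analysability allowing parameters outside $\dcl(ka)$, which the paper does not. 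So your last paragraph, while not wrong, addresses a difficulty the paper's formulation has already absorbed into the definition.
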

 
 \begin{proof}
 Analysability in $K^\delta$ means that there are tuples $a=a_0,a_1,\dots,a_\ell$, such that
 \begin{itemize}
 \item[(i)]
 $a_i$ is in the $\delta$-field generated by $a_{i-1}$ over $k$, for $i=1,\dots,\ell-1$, $a_\ell\in k$, and
 \item[(ii)]
 the type of $a_i$ over the algebraic closure of the $\delta$-field generated by $k(a_{i+1})$ is internal to $K^\delta$.
 \end{itemize}
 If $(V,s)$ is compound isotrivial one simply takes $a_{i}=f_{i-1}(a_{i-1})$ for $i=1,\dots,\ell$.
 Condition~(i) is clear -- in fact with ``$\delta$-field generated by" replaced by ``field generated by" -- and condition~(ii) follows from the fact that $a_i$ will be a $k(a_{i+1})^{\alg}$-generic $D$-point of one of the irreducible components of  $f_i^{-1}(a_{i+1})^{\operatorname{Zar}}$, all of which are isotrivial.
For the converse, given $a=a_0,a_1,\dots,a_\ell$ satisfying condition~(i) and~(ii), one first replaces $a_i$ with $(a_i,\delta(a_i),\dots,\delta^n(a_i))$ for some sufficiently large $n$ so that $a_i$ is a $k$-generic $D$-point of an irreducible $D$-variety $(V_i,s_i)$ over $k$.
This sequence of $D$-varieties will witness the compound isotriviality, using the fact that the irreducible components of $f_i^{-1}(a_{i+1})^{\operatorname{Zar}}$ are all conjugate over $k(a_{i+1})$ and hence the isotriviality of one implies the isotriviality of them all.
 \end{proof}
 
 \begin{remark}[Stability under base change]
The definition of compound isotriviality seems to be sensitive to parameters; the $D$-varieties $V_i$ and the $D$-rational maps $f_i$ need also be defined over~$k$.
In fact the notion is stable under base change: if an irreducible $D$-variety $(V,s)$ over~$k$ is compound isotrivial when viewed as a $D$-variety over some $\delta$-field extension $F\supseteq k$ then it was already compound isotrivial over $k$.
A model-theoretic re-statement of this is the well-known fact that a stationary type with a nonforking extension that is analysable in the constants is already analysable in the constants.
We leave it to the reader to formulate a geometric argument.
\end{remark}

Note also that (compound) isotriviality is preserved by $D$-birational maps.

\begin{proposition}
\label{compoundisotrivial}
For an irreducible compound isotrivial $D$-variety over $k$, \newline $\delta$-rationality implies $\delta$-local-closedness.
\end{proposition}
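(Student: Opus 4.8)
The plan is to induct on the number $\ell$ of steps in which $(V,s)$ is compound isotrivial, peeling off the first fibration $f_0\colon(V_0,s_0)\to(V_1,s_1)$ and combining Lemma~\ref{basefibre} with Fact~\ref{isotrivial}. Recall that, by Definition~\ref{deltadme-geometric}, being $\delta$-locally closed over a base field means precisely having a maximum proper $D$-subvariety over that field, which is exactly the language of Lemma~\ref{basefibre}.

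When $\ell=0$ the $D$-variety $V=V_0=0$ is an irreducible zero-dimensional variety, hence in characteristic zero a single $k$-rational point, so the empty set is its maximum proper $D$-subvariety over $k$ and $(V,s)$ is trivially $\delta$-locally closed. (The case $\ell=1$ could also be handled directly by Fact~\ref{isotrivial}, since then $V=V_0$ is itself isotrivial, but the inductive step below subsumes it.)

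Now suppose $\ell\geq 1$, that the proposition holds for compound isotrivial $D$-varieties in fewer than $\ell$ steps, and that $(V,s)$, compound isotrivial in $\ell$ steps via dominant $D$-rational maps $V=V_0\to V_1\to\cdots\to V_\ell=0$, is $\delta$-rational, so $k(V)^\delta\subseteq k^{\alg}$. I would apply Lemma~\ref{basefibre} to $f_0$. For the base: the tail $V_1\to\cdots\to V_\ell=0$ exhibits $(V_1,s_1)$ as compound isotrivial in $\ell-1$ steps, and since $f_0$ induces an embedding of $\delta$-fields $k(V_1)\hookrightarrow k(V_0)$ we get $k(V_1)^\delta\subseteq k(V_0)^\delta\subseteq k^{\alg}$, so $(V_1,s_1)$ is $\delta$-rational; the induction hypothesis then yields that $(V_1,s_1)$ has a maximum proper $D$-subvariety over $k$. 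For the fibre: fix a $k$-generic $D$-point $\eta$ of $V_1$ and set $V_\eta=f_0^{-1}(\eta)^{\operatorname{Zar}}$, which by the definition of compound isotriviality is a $k(\eta)$-irreducible isotrivial $D$-subvariety of $(V_0,s_0)$. Choosing a $k(\eta)$-generic $D$-point $a$ of $V_\eta$, one has (as in the proof of Lemma~\ref{basefibre}) that $a$ is $k$-generic in $V$; and since $\eta=f_0(a)$ lies in $k(a)$, the function field of the fibre is $k(\eta)(V_\eta)=k(\eta)(a)=k(a)=k(V)$. Hence $k(\eta)(V_\eta)^\delta=k(V)^\delta\subseteq k^{\alg}\subseteq k(\eta)^{\alg}$, so $V_\eta$ is $\delta$-rational over $k(\eta)$; being also isotrivial, Fact~\ref{isotrivial} applied with $k(\eta)$ in place of $k$ shows that $V_\eta$ has a maximum proper $D$-subvariety over $k(\eta)$. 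Both clauses of condition~(ii) of Lemma~\ref{basefibre} are thereby verified, so $(V_0,s_0)=(V,s)$ has a maximum proper $D$-subvariety over $k$, that is, it is $\delta$-locally closed.

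The step I expect to require the most care — the ``main obstacle'', such as it is — is the descent of $\delta$-rationality: trivially to the dominant image $V_1$, but more tellingly to the generic fibre $V_\eta$ over the \emph{larger} base field $k(\eta)$. This last passage works precisely because taking the generic fibre leaves the rational function field unchanged, $k(\eta)(V_\eta)=k(V)$, while enlarging the field against which $\delta$-constants are measured from $k^{\alg}$ to $k(\eta)^{\alg}$. Once one is alert to this, the argument is a routine induction, with the only remaining vigilance being to invoke Lemma~\ref{basefibre} and Fact~\ref{isotrivial} over the correct base field at each stage.
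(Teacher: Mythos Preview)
Your proof is correct and follows essentially the same route as the paper's: induction on the number of steps $\ell$, applying the induction hypothesis to the base $V_1$ (using that dominance of $f_0$ gives $k(V_1)^\delta\subseteq k(V)^\delta$), applying Fact~\ref{isotrivial} to the generic fibre $V_\eta$ (using $k(\eta)(V_\eta)=k(V)$), and concluding via Lemma~\ref{basefibre}. Your write-up is in fact slightly more explicit than the paper's in justifying the identity $k(\eta)(V_\eta)=k(V)$ via a generic $D$-point.
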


\begin{proof}
Suppose $(V,s)$ is an irreducible compound isotrivial $D$-variety over $k$ with $k(V)^\delta\subseteq k^{\alg}$.
We need to show that $V$ has a maximum proper $D$-subvariety over~$k$.
We proceed by induction on the number of steps witnessing the compound isotriviality.
The case $\ell=0$ is vacuous.
Suppose we have a compound isotrivial $(V,s)$ witnessed by
$$
\xymatrix{
V=V_0\ar[r]^{ \ \ \ f_0}&V_1\ar[r]^{f_1}&\cdots\ar[r]&V_{\ell-1}\ar[r]^{f_{\ell-1} \ \ \ }&V_\ell=0}
$$
with $\ell\geq1$.
Then $V_1$ is compound isotrivial in $\ell-1$ steps, and as $k(V_1)$ is a $\delta$-subfield of $k(V)$ by dominance of $f_0$, the induction hypothesis applies to give us a maximum proper $D$-subvariety of $V_1$ over $k$.

On the other hand, the generic fibre $V_\eta:=f_0^{-1}(\eta)^{\operatorname{Zar}}$ is an isotrivial $k(\eta)$-irreducible $D$-subvariety of $V$; where $\eta$ is a $k$-generic $D$-point of $V_1$.
Moreover, as $k(\eta)(V_\eta)=k(V)$, $V_\eta$ is $\delta$-rational and therefore Fact~\ref{isotrivial} applies to $V_\eta$ and we obtain a maximum proper $D$-subvariety over $k(\eta)$.
Now Lemma~\ref{basefibre} implies that $V$ has a maximum proper $D$-subvariety over $k$.
\end{proof}

\begin{corollary}
\label{deduce-diffDME}
Suppose $k\subseteq K^\delta$ and $(V,s)$ is a $D$-variety over $k$ with the property that every irreducible $D$-subvariety of $V$ over~$k^{\alg}$ is compound isotrivial.
Then $(V,s)$ satisfies $\delta$-$\dme$.
\end{corollary}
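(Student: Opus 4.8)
The plan is to reduce everything to Proposition~\ref{compoundisotrivial}, the only new ingredient being the bookkeeping needed to move between the base field $k$ and its algebraic closure. Fix a $k$-irreducible $D$-subvariety $W\subseteq V$. By Proposition~\ref{knownparts}, together with the hypothesis $k\subseteq K^\delta$, we already have that $\delta$-local-closedness implies $\delta$-primitivity and $\delta$-primitivity implies $\delta$-rationality for $W$ over $k$, so it remains only to prove that $\delta$-rationality implies $\delta$-local-closedness for $W$ over $k$. First I would pass to generic points: let $a\in(W,s)^\sharp(K)$ be a $k$-generic $D$-point, so that $p:=\tp(a/k)$ is the Kolchin generic type of $(W,s)^\sharp(K)$ over $k$; by Lemma~\ref{equivisolated} it is enough to show that $p$ is isolated, and, exactly as in the proof of Fact~\ref{isotrivial}, this reduces to showing that every extension of $p$ to $k^{\alg}$ is isolated.

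Each such extension is the Kolchin generic type over $k^{\alg}$ of $(\widehat W,s)^\sharp(K)$, where $\widehat W$ runs over the (finitely many, $k^{\alg}$-conjugate) irreducible components of $W$; note that $\widehat W$ is an irreducible $D$-subvariety of $V$ over $k^{\alg}$, that after possibly replacing $a$ by a conjugate we may take $a$ to lie in $\widehat W$ and be $k^{\alg}$-generic there, and hence that $k^{\alg}(\widehat W)=k^{\alg}(a)$. The next step is to observe that $\widehat W$ is $\delta$-rational over $k^{\alg}$. Using that the field of constants of a field-theoretic algebraic closure is the algebraic closure of the field of constants (as in the proof of Fact~\ref{isotrivial}), one gets
$$k^{\alg}(a)^\delta\subseteq\bigl(k(a)^{\alg}\bigr)^\delta=\bigl(k(a)^\delta\bigr)^{\alg}\subseteq(k^{\alg})^{\alg}=k^{\alg},$$
the last containment being precisely the hypothesis that $(W,s)$ is $\delta$-rational over $k$, i.e.\ $k(a)^\delta=k(W)^\delta\subseteq k^{\alg}$.

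Now I would invoke the hypothesis of the corollary: $\widehat W$, being an irreducible $D$-subvariety of $V$ over $k^{\alg}$, is compound isotrivial. Proposition~\ref{compoundisotrivial}, applied with $k^{\alg}$ as the base field, then gives that $(\widehat W,s)$ is $\delta$-locally closed over $k^{\alg}$; equivalently, by Lemma~\ref{equivisolated} applied over $k^{\alg}$, the Kolchin generic type of $(\widehat W,s)^\sharp(K)$ over $k^{\alg}$ is isolated. Since this is exactly the chosen extension of $p$ to $k^{\alg}$, we conclude that $p$ is isolated, hence $(W,s)$ is $\delta$-locally closed over $k$, and therefore $(V,s)$ satisfies the $\delta$-$\dme$ over $k$.

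I do not expect a genuine obstacle here: the substantive work is carried entirely by Proposition~\ref{compoundisotrivial} and Proposition~\ref{knownparts}. The only points requiring care are routine: identifying the extensions of the Kolchin generic type of $W$ over $k$ with the Kolchin generic types of its irreducible components over $k^{\alg}$, verifying that $\delta$-rationality descends to those components, and the descent of isolation from $k^{\alg}$ to $k$ — all of which are either standard or already appear in the proof of Fact~\ref{isotrivial}.
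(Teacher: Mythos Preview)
Your proposal is correct and follows essentially the same route as the paper: reduce via Proposition~\ref{knownparts} to showing $\delta$-rationality implies $\delta$-local-closedness, pass to the absolutely irreducible components over $k^{\alg}$, verify $\delta$-rationality persists there (using the same computation as in Fact~\ref{isotrivial}), apply the compound-isotriviality hypothesis together with Proposition~\ref{compoundisotrivial}, and then descend to $k$. The only cosmetic difference is that you phrase the descent step in terms of isolation of types (via Lemma~\ref{equivisolated}) whereas the paper phrases it geometrically by taking the union of the maximum proper $D$-subvarieties of the components; these are equivalent formulations of the same argument.
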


\begin{proof}
By Proposition~\ref{knownparts}, it suffices to show that every $\delta$-rational $k$-irreducible $D$-subvariety $(W,s)$ is $\delta$-locally closed.
Note that if $k=k^{\alg}$ then $(W,s)$ is absolutely irreducible, and compound isotrivial by assumption, so that $\delta$-local-closedness follows by Proposition~\ref{compoundisotrivial}.
In general, let $(W_0,s)$ be an absolutely irreducible component of $(W,s)$.
It is over $k^{\alg}$.
The $\delta$-rationality of $(W,s)$ over $k$ implies the $\delta$-rationality of $(W_0,s)$ over $k^{\alg}$ -- see for example the last paragraph of the proof of Fact~\ref{isotrivial}.
By assumption $(W_0,s)$ is compound isotrivial, and so by Proposition~\ref{compoundisotrivial} it is $\delta$-locally closed over $k^{\alg}$.
We have shown that every irreducible component of $(W,s)$ is $\delta$-locally closed over $k^{\alg}$, and it is not hard to see, by taking the union of the maximum proper $D$-subvarieties of these components, for example, that this implies that $(W,s)$ is $\delta$-locally closed, as desired.
\end{proof}

\medskip
\subsection{$D$-groups over the constants}
\label{dgroups-section}
A {\em $D$-group} is a $D$-variety $(G,s)$ over $k$ whose underlying variety $G$ is an algebraic group, and such that the section $s:G\to \tau G$ is a morphism of algebraic groups.
(Note that there is a unique algebraic group structure on $\tau(G)$ which makes the embedding $\nabla:G(K)\to\tau G(K)$ a homomorphism.)
The notions of {\em $D$-subgroup} and {\em homomorphism of $D$-groups} are the natural ones, with the caveat that, unless stated otherwise, parameters may come from a larger $\delta$-field.
The quotient of a $D$-group by a normal $D$-subgroup admits a natural $D$-group structure.
The terms {\em connected} and {\em connected component of identity} when applied to $D$-groups refer just to the underlying algebraic group, though note that the connected component of identity of a $D$-group over $k$ is a $D$-subgroup.

In the context of $D$-groups isotriviality is better behaved.
A connected $D$-group $(G,s)$ is isotrivial if and only if it is isomorphic as a $D$-group to one of the form $(H,0)$ where $H$ is an algebraic group over the constants and $0$ is the zero section.
So one remains in the category of $D$-groups, and $D$-birational equivalence is replaced by $D$-isomorphism.
See the discussion around Fact~2.6 of~\cite{kowalskipillay} for a proof of this.
In particular, every $D$-subvariety of an isotrivial $D$-group is itself isotrivial.
Quotients of isotrivial $D$-groups are also isotrivial.
Moreover, by~\cite[Corollary~3.10]{pillay06}, if a $D$-group $(G,s)$ has a finite normal $D$-subgroup $H$ such that $G/H$ is isotrivial, then $(G,s)$ must have been isotrivial to start with.
We also note that, as (compound) isotriviality is preserved under $D$-birational maps, when working inside a $D$-group (compound) isotriviality is preserved under translation by $D$-points of $G$ (as these translations will in fact be $D$-automorphisms of $G$).

The following fact is mostly a matter of putting together various results in the literature on $D$-groups.
As we will see, it will imply that every $D$-subvariety of a $D$-group over the constants is compound isotrivial in at most $3$ steps. At this point it is worth noting that the set of $D$-points of a $D$-group is a subgroup definable in $\dcf$ of finite Morley rank. Moreover, the $\sharp$ functor is an equivalence between the categories of $D$-groups over $k$ and finite Morley rank groups in $\dcf$ definable over $k$ (see~\cite[Fact~2.6]{kowalskipillay}).

\begin{fact}
\label{simple}
Suppose $(G,s)$ is a connected $D$-group over the constants.
\begin{itemize}
\item[(a)]
The centre $Z(G)$ is a normal $D$-subgroup of $G$ over the constants, and the quotient $G/Z(G)$ is an isotrivial $D$-group.
\item[(b)]
Let $H$ be the algebraic subgroup of points in $Z(G)$ where $s$ agrees with the zero section.
Then $Z(G)/H$ is an isotrivial $D$-group.
\end{itemize}
\end{fact}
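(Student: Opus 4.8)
The plan is to fix an embedding $G\subseteq\gln$ and write $s=(\id,\overline s)$ with $\overline s\colon G\to\matn$, so that, as recalled in the introduction, the condition that $s$ be a homomorphism reads $\overline s(gh)=\overline s(g)h+g\overline s(h)$. It will be convenient to pass to the right-trivialised form $c(g):=\overline s(g)g^{-1}$; this lands in the Lie algebra $\mathfrak g:=\operatorname{Lie}(G)\subseteq\matn$ and, as one checks directly from that identity, is a rational $1$-cocycle for the adjoint action, $c(gh)=c(g)+\operatorname{Ad}(g)c(h)$. Since $(G,s)$ is over the constants, the data $G$, $\mathfrak g$, $\overline s$, $c$ are all over $K^\delta$ and $\tau G=TG$; in particular, for a subgroup $X\subseteq G$ over the constants and $x\in X$, one has $s(x)\in TX$ exactly when $c(x)\in\operatorname{Lie}(X)$.

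For (a), $Z(G)$ is characteristic in $G$, hence normal and defined over $K^\delta$. To see it is a $D$-subgroup it is enough to show $c(z)\in\operatorname{Lie}(Z(G))$ for all $z\in Z(G)$. Applying the cocycle identity to $gz$ and to $zg$ for an arbitrary $g\in G$, and using $\operatorname{Ad}(z)=\id$, gives $c(g)+\operatorname{Ad}(g)c(z)=c(gz)=c(zg)=c(z)+c(g)$, so $\operatorname{Ad}(g)c(z)=c(z)$ for every $g$; hence $c(z)\in\mathfrak g^G=\operatorname{Lie}(Z(G))$, the last equality because $G$ is connected and we are in characteristic zero. So $Z(G)$ is a normal $D$-subgroup over the constants, and $G/Z(G)$ acquires the natural quotient $D$-group structure, still over the constants. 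Now $\operatorname{Ad}\colon G\to GL(\mathfrak g)$ has kernel $Z(G)$ (connectedness, characteristic zero), so it realises $G/Z(G)$ as a connected \emph{linear} $D$-group over the constants. The remaining assertion of (a) is that such a group is isotrivial; this I would obtain from the known principle that a connected linear $D$-group over the constants is isotrivial — its set of $\sharp$-points is the solution set of a logarithmic differential equation over $K^\delta$, which, unlike in the Manin-kernel situation (where the underlying group is not over the constants), is internal to $K^\delta$ — together with the equivalence between internality to $K^\delta$ and isotriviality from \cite[Fact~2.6]{kowalskipillay}. I expect this to be the only genuinely non-elementary input, and would cite it from the differential-Galois-theoretic literature on $D$-groups.

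For (b), restrict the cocycle identity to $Z(G)$, where the adjoint action is trivial: this shows $c{\upharpoonright}_{Z(G)}\colon Z(G)\to\mathfrak g$ is a homomorphism of algebraic groups, landing in $\operatorname{Lie}(Z(G))$ by the computation above. Its kernel is $\{z\in Z(G):c(z)=0\}=\{z\in Z(G):\overline s(z)=0\}$, which is exactly the subgroup $H$ of $Z(G)$ on which $s$ agrees with the zero section; so $H$ is an algebraic subgroup, and $Z(G)/H$ is isomorphic to the image of $c{\upharpoonright}_{Z(G)}$, a closed subgroup of the vector group $\operatorname{Lie}(Z(G))$. In characteristic zero every closed subgroup of a vector group is a linear subspace, so $Z(G)/H\cong\mathbb G_a^d$ as algebraic groups. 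Any $D$-group structure on $\mathbb G_a^d$ over the constants is given by an algebraic group homomorphism $\mathbb G_a^d\to\operatorname{Lie}(\mathbb G_a^d)=\mathbb G_a^d$, hence is additive, of the form $\delta\overline x=M\overline x$ for a constant matrix $M$; thus the $\sharp$-points of $Z(G)/H$ form the $d$-dimensional $K^\delta$-vector space spanned by a fundamental system of solutions, which is internal to $K^\delta$. So $Z(G)/H$ is isotrivial, again by \cite[Fact~2.6]{kowalskipillay} — or one may simply note that $Z(G)/H$ is linear and reuse the input of (a).

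What is left is bookkeeping: checking that the quotient $D$-group structures on $G/Z(G)$ and $Z(G)/H$, and the structure transported across the isomorphism $Z(G)/H\cong\mathbb G_a^d$, are the ones described above. This is routine — keeping everything phrased via the cocycle $c$ keeps it manageable — and is the sort of algebro-geometric translation the authors tend to leave to the reader.
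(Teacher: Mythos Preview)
Your cocycle setup and the direct verification that $Z(G)$ is a $D$-subgroup are clean and correct; the paper simply cites \cite[2.7(iii)]{kowalskipillay} for this, so you have spelled out what is behind that citation. Your argument for~(b) is also correct, and in fact more direct than the paper's: the paper works on the $\sharp$-side and uses the \emph{differential} logarithmic derivative $\ell d\colon Z^\circ\to L(Z^\circ)$ to realise $(Z^\circ,s)^\sharp/Z^\circ(K^\delta)$ as a finite-dimensional $K^\delta$-subspace of $L(Z^\circ)$, whereas you stay on the algebraic side and use that $c{\upharpoonright}_{Z(G)}$ is an \emph{algebraic} group homomorphism into a vector group, identifying $Z(G)/H$ with $\mathbb G_a^d$ over the constants and then observing that any $D$-group structure on $\mathbb G_a^d$ over the constants is linear. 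Both routes reach the same $K^\delta$-vector-space conclusion; yours avoids the $\sharp$-functor bookkeeping.

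The genuine gap is in~(a), in the black box you invoke for the isotriviality of $G/Z(G)$. The ``known principle that a connected linear $D$-group over the constants is isotrivial'' is not a known principle; as stated it is false. Your $G$ itself is a connected linear $D$-group over the constants (you embedded it in $\gln$), so your principle would make $(G,s)$ isotrivial outright, and the three-step compound-isotrivial decomposition of Proposition~\ref{ci} --- indeed the Buium/Kowalski--Pillay theorem you are trying to recover --- would be vacuous. Your heuristic justification (``its set of $\sharp$-points is the solution set of a logarithmic differential equation over $K^\delta$'') applies verbatim to $G$ as well as to $G/Z(G)$, so it cannot be the reason $G/Z(G)$ is special. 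What is actually used, in Buium's original argument and in \cite[2.10]{kowalskipillay}, is that $G/Z(G)$ is \emph{centerless}: this is what allows one to trivialise the induced cocycle (morally, via vanishing of the relevant $H^1$ for the adjoint representation) and conclude isotriviality. Linearity alone buys nothing here. So the correct move is exactly what the paper does: cite the Buium/Kowalski--Pillay theorem for $G/Z(G)$, not a nonexistent principle about linear $D$-groups. Your closing remark in~(b), ``or one may simply note that $Z(G)/H$ is linear and reuse the input of~(a)'', is likewise unjustified; fortunately your main argument for~(b) does not depend on it.
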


\begin{proof}
For a proof that $Z(G)$ is a $D$-subgroup see~\cite[2.7(iii)]{kowalskipillay}.
That $G/Z(G)$ is isotrivial was originally proved by Buium~\cite{Buium} in the centerless case, and then  generalised by Kowalski and Pillay in~\cite[2.10]{kowalskipillay}.

For part~(b), note first of all that $H$ is a $D$-subgroup of $Z(G)$ by definition; the zero section does map to the tangent bundle of $H$.
Now, it suffices to show that $Z^\circ/H^\circ$ is isotrivial where $Z^\circ$ is the connected component of identity of $Z(G)$ and $H^\circ:=Z^\circ\cap H$.
Let $\mathcal Z:=\big(Z^\circ,s\big)^\sharp(K)$, the subgroup of $D$-points of $Z$.
Then $(H^\circ,s)^\sharp(K)=\mathcal Z(K^\delta)$, the $\delta$-constant points of $\mathcal Z$.
These are now commutative $\delta$-algebraic groups.
As the $\sharp$ functor is an equivalence of categories, isotriviality of $Z^\circ/H^\circ$ will follow once we show that $\mathcal Z/\mathcal Z(K^\delta)$ is definably isomorphic (over some parameters) to $(K^\delta)^n$ for some~$n$. 
Because $Z^\circ$ is a connected commutative algebraic group over the constants, there exists a $\delta$-algebraic group homomorphism $\ell d:Z^\circ\to L(Z^\circ)$ over~$k_0$, where $L(Z^\circ)$ is the Lie algebra of~$Z^\circ$, the tangent space at the identity.
This homomorphism is called the {\em logarithmic derivative} and is defined as
$$\ell d(X)=\nabla(X)\cdot(s(X))^{-1}$$
where the operations occur in $TZ^{\circ}$. One can check that $\ell d$ is surjective with kernel $Z^\circ(K^\delta)$ (a proof appears in~\cite[$\S$3]{marker}, see also~\cite[$\S$V.22]{kolchin}).
So $\mathcal Z/\mathcal Z(K^\delta)$ is definably isomorphic to a $\delta$-algebraic subgroup, $\mathcal F$, of $L(Z^\circ)$.
Since $L(Z^\circ)$ is a vector group, $\mathcal F$ is a finite-dimensional $K^\delta$-vector subspace (see, for example,~\cite[Fact~1.3]{pillay96}), and hence definably isomorphic over a basis to some~$(K^\delta)^n$.
\end{proof}

Suppose $(G,s)$ is a $D$-group over a $\delta$-field $k$ and $(H,s)$ is a $D$-subgroup over $k$
Even when $H$ is not normal, it makes sense to consider the quotient space $G/H$ as an algebraic variety, and $s$ will induce on $G/H$ the natural structure of a $D$-variety $(G/H,\bar s)$ over $k$, in such a way that the quotient map $\pi:G\to G/H$ is a $D$-morphism.
See~\cite[Fact~2.7(ii)]{kowalskipillay}, for details.
Now if $\alpha$ is a $D$-point of $(G/H,\bar s)$, then the fibre $\pi^{-1}(\alpha)$ is a $D$-subvariety over $k(\alpha)$; and for $\beta$ a $D$-point of this fibre we have $\pi^{-1}(\alpha)=\beta+H$.
So each fibre $\big(\pi^{-1}(\alpha),s)$ is isomorphic to $(H,s)$ over $k(\beta)$.
One could develop in this context the notion of ``$D$-homogeneous spaces".

Using Fact~\ref{simple} we obtain the following highly restrictive property on the structure of $D$-subvarieties of $D$-groups over the constants.

\begin{proposition}
\label{ci}
Suppose $(G,s)$ is a connected $D$-group over $k_0\subseteq K^\delta$.
If $k$ is any $\delta$-field extension of $k_0$ and $W$ is any irreducible $D$-subvariety of $G$ over $k$, then $W$ is compound isotrivial in at most $3$ steps.

In particular, if $W$ is $\delta$-rational then it is $\delta$-locally closed.
\end{proposition}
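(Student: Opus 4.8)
The plan is to reduce the final ``in particular'' clause to Proposition~\ref{compoundisotrivial}, and to obtain the compound isotriviality of $W$ by exhibiting a three-step analysis of a generic $D$-point of $W$ in the constants, building on the chain $H\subseteq Z(G)\subseteq G$ produced by Fact~\ref{simple}. Here $H$ is the (normal, being central) $D$-subgroup of $Z(G)$ on which $s$ restricts to the zero section, all three groups are defined over $k_0\subseteq K^\delta$, and by Fact~\ref{simple} the $D$-groups $G/Z(G)$ and $Z(G)/H$ are isotrivial while $(H,s)=(H,0)$ is isotrivial trivially. Since $H$ is a normal $D$-subgroup, $G/H$ is again a $D$-group over $k_0$, and I get quotient homomorphisms of $D$-groups $\pi_H\colon G\to G/H$ and $\rho\colon G/H\to G/Z(G)$ over $k_0$, with $\ker\rho=Z(G)/H$ and $\pi_Z:=\rho\circ\pi_H$ the projection to $G/Z(G)$.

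Next I would fix a $k$-generic $D$-point $a$ of $W$ and set $a_0:=a$, $a_1:=\pi_H(a)$, $a_2:=\pi_Z(a)$, and $a_3:=e$ (the identity of $G/Z(G)$, which lies in $k_0$). Because $\pi_H$ and $\pi_Z$ are $D$-morphisms over $k_0$, the $a_i$ are $D$-points and each $a_{i+1}$ lies in the field generated by $a_i$ over $k$. By the description of analysability recalled in the proof of Lemma~\ref{cianal}, it then suffices to verify that $\tp(a_i/\overline{k(a_{i+1})})$ is internal to $K^\delta$ for $i=0,1,2$; this will give that $\tp(a/k)$ is analysable in $K^\delta$ in three steps and hence, by Lemma~\ref{cianal}, that $(W,s)$ is compound isotrivial in at most three steps, after which Proposition~\ref{compoundisotrivial} yields $\delta$-local-closedness from $\delta$-rationality.

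For the internality checks I would argue as follows. For $i=2$ we have $\overline{k(a_3)}=k^{\alg}$, and $a_2$ is a $D$-point of the isotrivial $D$-group $G/Z(G)$; its locus over $k^{\alg}$ is therefore an irreducible $D$-subvariety of an isotrivial $D$-group, hence isotrivial, so $\tp(a_2/k^{\alg})$ is $K^\delta$-internal. For $i=1$, the point $a_1$ lies in the fibre $\rho^{-1}(a_2)$, which is the coset $a_1\cdot(Z(G)/H)$ of $\ker\rho$; since $a_1$ is a $D$-point, translation by it is a $D$-automorphism of $G/H$ carrying $(Z(G)/H,s)$ onto this fibre, so the fibre is isotrivial, and the locus of $a_1$ over $\overline{k(a_2)}$, being a $D$-subvariety of it, is isotrivial. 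For $i=0$ the same reasoning applies to the fibre $\pi_H^{-1}(a_1)=a\cdot H$, which is $D$-isomorphic (by translation by the $D$-point $a$) to $(H,0)$.

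The step I expect to be the main obstacle is getting the two intermediate fibres to be isotrivial: a fibre of $\pi_H$, or of $\rho$, is a torsor under $H$ (resp. under $Z(G)/H$), not a group, and one only trivialises it after choosing a $D$-point of it -- which is precisely what the $D$-points $a$ and $a_1$ coming from the generic $D$-point of $W$ supply, via ``translation by a $D$-point is a $D$-automorphism'' together with ``$D$-subvarieties of isotrivial $D$-groups are isotrivial.'' A secondary point to keep track of is that $Z(G)$ and $H$ need not be connected, but Fact~\ref{simple} is already stated for the full centre and the full $H$, so no extra work is needed there, and the bound ``at most three'' is just the length of the chain $H\subseteq Z(G)\subseteq G$.
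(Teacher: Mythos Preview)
Your proposal is correct and follows essentially the same approach as the paper: both use the chain $H\lhd Z(G)\lhd G$ from Fact~\ref{simple}, observe that $H$, $Z(G)/H$, and $G/Z(G)$ are isotrivial, and then exploit that fibres of the quotient maps over $D$-points are $D$-isomorphic (via translation by a $D$-point) to the corresponding kernels, so that $D$-subvarieties of these fibres are isotrivial. The only cosmetic difference is that the paper verifies the definition of compound isotriviality directly by building the sequence $W\to W_1\to W_2\to 0$ of Zariski closures of images, whereas you phrase the same computation in terms of types and invoke Lemma~\ref{cianal} to translate analysability back into compound isotriviality.
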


\begin{proof}
Consider the normal sequence of $D$-subgroups
$$G\rhd Z(G)\rhd H\rhd 0$$
where $Z(G)$ is the centre of $G$ and $H$ is the algebraic subgroup of points in $Z(G)$ where $s$ agrees with the zero section.
Consider the corresponding sequence of irreducible $D$-varieties and $D$-morphisms over $k_0$
$$
\xymatrix{
G\ar[r]^{\pi_0\ \ \ }&G/H\ar[r]^{\pi_1\ \ }&G/Z(G)\ar[r]^{\pi_2}&0}.
$$
Since $G/Z(G)$, $Z(G)/H$, and $H$ are isotrivial -- the first two by~Fact~\ref{simple} and the last as $s|_H$ is the zero section -- this exhibits $G$ as compound isotrivial in three steps.
We can then obtain the same result for any irreducible $D$-subvariety of $G$ by using the fact that any element of $(G,s)^\sharp(K)$ is a product of two generic elements.
Alternatively we can argue as follows, keeping in mind that every $D$-subvariety of an isotrivial $D$-group is itself isotrivial.

If $W\subseteq G$ is an irreducible $D$-variety over $k$, then we get a sequence of dominant $D$-morphisms
$$
\xymatrix{
W\ar[r]^{f_0}&W_1\ar[r]^{f_1}&W_{2}\ar[r]^{f_2}&0}
$$
where $W_1\subseteq G/H$ is the Zariski closure of $\pi_0(W)$, and $W_2\subseteq G/Z(G)$ is the Zariski closure of $\pi_1(W_1)$, and the $f_i$ are the appropriate restrictions of the $\pi_i$.
Then $W_2$ is isotrivial as it is a $D$-subvariety of $G/Z(G)$.
If $\alpha$ is a $D$-point of $W_2$ then $f_1^{-1}(\alpha)$ is a $D$-subvariety of $\pi_1^{-1}(\alpha)$ which is isomorphic as a $D$-variety to $Z(G)/H$.
So the fibres of $f_1$ over $D$-points are all isotrivial $D$-subvarieties of $W_1$.
If $\beta$ is a $D$-point of $W_1$ then $f_0^{-1}(\beta)$ is a $D$-subvariety of $\pi_0^{-1}(\beta)$ which is isomorphic as a $D$-variety to $H$.
So the fibres of $f_0$ over $D$-points are all isotrivial.
Hence $W$ is compound isotrivial in $3$ steps.

The ``in particular" clause is by Proposition~\ref{compoundisotrivial}.
\end{proof}

We have now proved Theorem~A of the introduction:

\begin{corollary}
\label{dgroupDME}
If $k\subseteq K^\delta$ then every $D$-group over $k$ satisfies $\delta$-$\dme$.
\end{corollary}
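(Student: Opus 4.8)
The plan is to deduce Corollary~\ref{dgroupDME} directly from Proposition~\ref{ci} together with the structural results already assembled, essentially by reducing the statement about an arbitrary $D$-group to the connected case. First I would observe that the corollary asserts that an arbitrary $D$-group $(G,s)$ over $k\subseteq K^\delta$ satisfies the $\delta$-$\dme$; unwinding Definition~\ref{deltadme-geometric}, this means that for every $k$-irreducible $D$-subvariety $W\subseteq G$, the three conditions $\delta$-primitivity, $\delta$-local-closedness, and $\delta$-rationality are equivalent. By Proposition~\ref{knownparts}, and because $k\subseteq K^\delta$, we already have $\delta$-local-closedness $\Rightarrow$ $\delta$-primitivity $\Rightarrow$ $\delta$-rationality, so the entire content is the implication $\delta$-rationality $\Rightarrow$ $\delta$-local-closedness for $k$-irreducible $D$-subvarieties $W$ of $G$.

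Next I would reduce to the connected case. The connected component of identity $G^\circ$ is a $D$-subgroup of $G$ over $k$ (in fact over $k_0$, the field of definition of $G$, which may be taken inside $K^\delta$), and $G$ is covered by finitely many translates $g_iG^\circ$ by $k^{\alg}$-points $g_i$. Since $G^\circ$ is itself a $D$-group over the constants in the sense of Proposition~\ref{ci}, that proposition tells us that every irreducible $D$-subvariety of $G^\circ$ over an arbitrary $\delta$-field extension $k$ is compound isotrivial in at most $3$ steps. I would then want to transfer this to subvarieties of the other components $g_iG^\circ$. If $W_0$ is an irreducible $D$-subvariety of $gG^\circ$ and $b$ is any $D$-point of $gG^\circ$ (these exist, lying in $gG^\circ(k^{\alg})$ after possibly enlarging $k$), then translation by $b^{-1}$ is a $D$-isomorphism $gG^\circ\to G^\circ$, so $b^{-1}W_0$ is an irreducible $D$-subvariety of $G^\circ$, hence compound isotrivial, and compound isotriviality is preserved by $D$-birational maps (noted after Lemma~\ref{cianal}), so $W_0$ is compound isotrivial too. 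The mild wrinkle here — that $gG^\circ$ may a priori have no $D$-point over a reasonable field — is handled exactly as in the proof of Proposition~\ref{ci}: one works over a $\delta$-field extension where such a point exists, and compound isotriviality is stable under base change by the Remark following Lemma~\ref{cianal}.

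With the component analysis in hand, I would conclude: every irreducible $D$-subvariety of $G$ over $k^{\alg}$ is compound isotrivial. Thus $(G,s)$ satisfies the hypothesis of Corollary~\ref{deduce-diffDME}, and since $k\subseteq K^\delta$, that corollary immediately gives that $(G,s)$ satisfies the $\delta$-$\dme$. In fact the cleanest write-up simply says: by Proposition~\ref{ci} applied to $G^\circ$ (over its field of definition $k_0\subseteq K^\delta$) and the translation argument above, every irreducible $D$-subvariety of $G$ over $k^{\alg}$ is compound isotrivial in at most $3$ steps; now apply Corollary~\ref{deduce-diffDME}.

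I do not expect any genuine obstacle; the proof is a short assembly of Proposition~\ref{ci}, Corollary~\ref{deduce-diffDME}, and the standard fact that a $D$-subvariety of a $D$-group is, up to translation by a $D$-point over a suitable extension, a $D$-subvariety of the identity component. The only point requiring a little care is the reduction from $G$ to $G^\circ$ via the finitely many cosets, and in particular making sure the translating $D$-point exists over an appropriate $\delta$-field — but this is exactly the device already used inside the proof of Proposition~\ref{ci}, combined with base-change stability of compound isotriviality, so it costs nothing new.
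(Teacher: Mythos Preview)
Your proposal is correct and follows essentially the same route as the paper's proof: show that every irreducible $D$-subvariety of $G$ over $k^{\alg}$ is compound isotrivial by translating it into the connected component $G^\circ$ and invoking Proposition~\ref{ci}, then apply Corollary~\ref{deduce-diffDME}. The paper's argument is terser---it simply asserts that over $k^{\alg}$ the subvariety is $D$-isomorphic to one in $G^\circ$---whereas you spell out the translation by a $D$-point and the appeal to base-change stability of compound isotriviality more explicitly; this extra care is warranted, since a $D$-point of the relevant coset need not exist over $k^{\alg}$ itself.
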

\begin{proof}
Suppose $(G,s)$ is a $D$-group over $k$ and $W$ is an irreducible $D$-subvariety of $G$ over $k^{\alg}$.
Then, over $k^{\alg}$, it is isomorphic to an irreducible $D$-subvariety of the connected component of identity, $G_0$.
Applying Proposition~\ref{ci} to $G_0$, we have that $W$ is compound isotrivial.
So every irreducible $D$-subvariety of $G$ over $k^{\alg}$ is compound isotrivial.
The $\delta$-$\dme$ now follows from Corollary~\ref{deduce-diffDME}.
\end{proof}

\medskip
\subsection{Differential Hopf algebras}
In this section we give equivalent algebraic formulations of the $\delta$-$\dme$ and our results so far.
This will help us make the connection to the classical $\dme$, which is about noncommutative associative algebras and as such does not have a direct geometric formulation.

We restrict our attention to the case when $\delta$ is trivial on the base field $k$.

As explained in~$\S$\ref{dvar-section}, the standard geometry-algebra  duality which assigns to a variety its co-ordinate ring, induces an equivalence between the category of $k$-irreducible affine $D$-varieties $(V,s)$ and that of differential rings $(R,\delta)$ where $R$ is an affine $k$-algebra and $\delta$ is a $k$-linear derivation.
This equivalence associates to a $k$-irreducible $D$-subvariety of $V$ a prime $\delta$-ideal of $R$.
Using this dictionary, we can easily translate the geometric Definition~\ref{deltadme-geometric}, in the case when $k\subseteq K^\delta$, into the following algebraic counterpart.

\begin{definition}[$\delta$-$\dme$ for affine differential algebras]
\label{deltadme-rings} 
Suppose $R$ is an affine $k$-algebra equipped with a $k$-linear derivation $\delta$.
We say that $(R,\delta)$ {\em satisfies the $\delta$-$\dme$} if for every prime $\delta$-ideal $P$ of $R$, the following conditions are equivalent:
\begin{itemize}
\item[(i)]
$P$ is {\em $\delta$-primitive}:
There exists a maximal ideal $\mathfrak m$ of $R$ such that $P$ is maximal among the prime $\delta$-ideals contained in $\mathfrak m$.
\item[(ii)]
$P$ is {\em $\delta$-locally-closed}:
The intersection of all the prime $\delta$-ideals of $R$ that properly contain $P$ is a proper extension of $P$.
\item[(iii)]
$P$ is {\em $\delta$-rational}: $\operatorname{Frac}(R/P)^\delta$ is contained in $k^{\alg}$.
\end{itemize}
\end{definition}

The algebraic counterpart of an affine algebraic group $G$ over $k$ is the commutative Hopf $k$-algebra $R=k[G]$, where the group law $G\times G\to G$ induces a co-product $\Delta:R\to R\otimes_kR$.
So what is the algebraic counterpart of a $D$-group $(G,s)$ over $k$?
The following lemma says that it is a {\em differential Hopf $k$-algebra}, a commutative Hopf $k$-algebra $R$ equipped with a $k$-linear derivation $\delta$ that commutes with the coproduct, where $\delta$ acts on $R\otimes_kR$ by $\delta(r_1\otimes r_2)=\delta r_1\otimes r_2+r_1\otimes \delta r_2$.

\begin{lemma}
\label{geomdhopf}
Suppose $k\subseteq K^\delta$ and let $(G,s)$ be a $D$-variety defined over~$k$ such that $G$ is a connected affine algebraic group.
Let $\delta$ on $R=k[G]$ be the corresponding $k$-linear derivation.
Then $s:G\to TG$ is a group homomorphism if and only if $\delta$ commutes with the coproduct.
\end{lemma}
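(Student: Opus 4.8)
The plan is to use the functor-of-points description of the tangent bundle. Since $G$ is defined over $k\subseteq K^\delta$, the prolongation $\tau G$ is just the tangent bundle $TG$, and for any $k$-algebra $A$ there is a canonical identification $TG(A)=G(A[\epsilon])$, where $A[\epsilon]$ denotes the ring of dual numbers $A[\epsilon]/(\epsilon^2)$; under this identification the projection $\tau G\to G$ is induced by $\epsilon\mapsto 0$, and the algebraic group structure on $TG$ is precisely the one obtained by applying the group functor $G$ to $A\mapsto A[\epsilon]$. Writing $R=k[G]$ with comultiplication $\Delta\colon R\to R\otimes_kR$, an $A$-point of $G$ is a $k$-algebra homomorphism $g\colon R\to A$, and the product $gh$ is $m_A\circ(g\otimes h)\circ\Delta$, where $m_A\colon A\otimes_kA\to A$ is multiplication.

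First I would record, following $\S\ref{dvar-section}$, how the derivation $\delta$ on $R$ corresponds to the section: for $g\in G(A)$ the point $s(g)\in G(A[\epsilon])$ is the $k$-algebra homomorphism $r\mapsto g(r)+g(\delta r)\epsilon$. The check that this is a ring homomorphism is exactly the Leibniz identity for $\delta$, and post-composing with $\epsilon\mapsto 0$ returns $g$, so $s$ is a section of the projection $\tau G\to G$.

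Next I would expand both sides of the condition $s(gh)=s(g)s(h)$ in $G(A[\epsilon])$ using Sweedler notation $\Delta r=\sum r_{(1)}\otimes r_{(2)}$. The left-hand side sends $r$ to $(gh)(r)+(gh)(\delta r)\epsilon$. For the right-hand side, multiplying $s(g)$ and $s(h)$ in $G(A[\epsilon])$ and using $\epsilon^2=0$ gives that $s(g)s(h)$ sends $r$ to
\[
\sum g(r_{(1)})h(r_{(2)})\;+\;\Big(\sum g(\delta r_{(1)})h(r_{(2)})+g(r_{(1)})h(\delta r_{(2)})\Big)\epsilon.
\]
The constant terms agree, both being $(gh)(r)$, while the coefficient of $\epsilon$ on the right is exactly the image of $\delta_{R\otimes R}(\Delta r)$ under $m_A\circ(g\otimes h)$, where $\delta_{R\otimes R}(a\otimes b)=\delta a\otimes b+a\otimes\delta b$. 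Hence $s(gh)=s(g)s(h)$ holds for all $A$ and all $g,h\in G(A)$ if and only if $m_A\circ(g\otimes h)$ annihilates $\Delta(\delta r)-\delta_{R\otimes R}(\Delta r)$ for every $r\in R$ and every such $A,g,h$.

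Finally I would take the universal instance $A=R\otimes_kR$ with $g,h$ the two coordinate inclusions $R\to R\otimes_kR$; then $m_A\circ(g\otimes h)$ is the identity map of $R\otimes_kR$, so the condition collapses to $\Delta\circ\delta=\delta_{R\otimes R}\circ\Delta$, that is, $\delta$ commutes with the coproduct, and the reverse implication is immediate from the same computation. I expect the only real work here to be the bookkeeping for the group law on $TG$ and the dual-numbers computation displayed above; a more hands-on alternative would fix an embedding $G\subseteq\gln$ and translate the identity $\overline s(gh)=\overline s(g)h+g\overline s(h)$ recorded in the introduction directly into an identity in $R$, but the functor-of-points route seems cleaner.
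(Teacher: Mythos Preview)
Your argument is correct and is a genuinely different route from the paper's. The paper argues via differentials: it interprets $\delta f$ as the function $x\mapsto d_xf(s(x))$ and $\Delta$ as precomposition with the multiplication map $m$, so that $\Delta(\delta f)=\delta\Delta(f)$ becomes the pointwise identity $d_{m(y,z)}f\big(s(m(y,z))\big)=d_{(y,z)}(f\circ m)\big(s(y),s(z)\big)$; then the chain rule gives the right-hand side as $d_{m(y,z)}f\big(s(y)*s(z)\big)$ with $*=dm$ the group law on $TG$, and varying $f$ yields $s\circ m=m_{TG}\circ(s,s)$. Your approach replaces this differential-geometric computation with the dual-numbers description $TG(A)=G(A[\epsilon])$ and a direct Hopf-algebraic expansion in Sweedler notation, finishing with the universal point $A=R\otimes_kR$. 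The paper's version is closer in spirit to the matrix computation $\overline s(gh)=\overline s(g)h+g\overline s(h)$ carried out later for the twisted case, and keeps everything phrased in terms of sections and tangent vectors; your version is cleaner bookkeeping-wise, avoids any ambiguity about where differentials are evaluated, and makes transparent why the ``universal'' pair $(g,h)$ suffices. Either argument is short and self-contained.
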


\begin{proof}
Unravelling the fact that $s$ induces the derivation $\delta$ on $k[G]$ and that the group operation $m:G\times G\to G$ induces the coproduct $\Delta$ on $k[G]$, we have that for all $f\in k[G]$,
\begin{equation}
\label{dhopf}
\Delta(\delta f)=\delta\Delta(f)\iff df(s(m(y,z)))=d(f\circ m)(s(y),s(z))
\end{equation}
where $(y,z)$ are coordinates for $G\times G$.
But
$$d(f\circ m)(s(y),s(z))=df\circ dm(s(y),s(z))=df(s(y)*s(z)),$$
where $*$ denotes the group operation $dm:TG\times TG\to TG$.
And so the right hand side of~(\ref{dhopf}) is equivalent to 
$$df(s(m(y,z)))=df(s(y)*s(z)).$$
But this, asserted for all $f\in k[G]$, is equivalent to $s(m(y,z))=s(y)*s(z)$, i.e., that $s$ is a group homomorphism.
\end{proof}


In other words, the study of connected affine $D$-groups over the constants is the same thing as the study of affine differential Hopf $k$-algebras.
So our Theorem~A becomes:

\begin{theorem}
\label{dgroupDME-ring}
Every commutative affine differential Hopf algebra over a field of characteristic zero satisfies $\delta$-$\dme$.
\end{theorem}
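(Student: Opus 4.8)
The plan is to deduce Theorem~\ref{dgroupDME-ring} directly from Corollary~\ref{dgroupDME} via the geometry-algebra dictionary set up in the preceding discussion, the only subtlety being that Corollary~\ref{dgroupDME} is stated for $D$-groups while we are handed an abstract differential Hopf algebra, so we must first confirm that affine differential Hopf $k$-algebras really do correspond to affine $D$-groups over $k$ (with $\delta$ trivial on $k$). First I would reduce to the case $k\subseteq K^\delta$: a commutative affine differential Hopf $k$-algebra comes with a $k$-linear derivation, which by definition is trivial on $k$, so we may embed $k$ as a $\delta$-subfield of the universal domain $K$ with $k\subseteq K^\delta$. Write $R$ for the Hopf algebra and $G=\spec R$ for the corresponding affine algebraic group over $k$; since the derivation $\delta$ is $k$-linear it defines, as recalled in~$\S$\ref{dvar-section}, a regular section $s:G\to\tau G=TG$ (here $\tau G=TG$ precisely because $G$ is over the constants). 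By Lemma~\ref{geomdhopf}, applied to each connected component — or rather, after noting that we may pass to the connected component $G^\circ$ of the identity, which is again a differential Hopf algebra, since $\delta$ preserves the ideal defining $G^\circ$ — the hypothesis that $\delta$ commutes with the coproduct is exactly the statement that $s$ is a homomorphism of algebraic groups, i.e.\ $(G,s)$ is a $D$-group over $k$.

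Next I would match up the three conditions. Under the correspondence between $k$-irreducible affine $D$-varieties $(V,s)$ and differential rings $(k[V],\delta)$, $k$-irreducible $D$-subvarieties of $V$ over $k$ correspond to prime $\delta$-ideals of $k[V]$, and Definition~\ref{deltadme-geometric} translates term-by-term into Definition~\ref{deltadme-rings}: $\delta$-primitivity of a prime $\delta$-ideal $P$ (being maximal among prime $\delta$-ideals inside some maximal ideal $\mathfrak m$) corresponds to there being a $k^{\alg}$-point of $W=V(P)$ lying on no proper $D$-subvariety over $k$, since the maximal ideals of $k[V]$ are exactly the (Galois orbits of) $k^{\alg}$-points; $\delta$-local-closedness (the intersection of the prime $\delta$-ideals properly containing $P$ strictly contains $P$) corresponds to the existence of a maximum proper $D$-subvariety of $W$ over $k$; and $\delta$-rationality ($\Frac(k[V]/P)^\delta\subseteq k^{\alg}$) is literally $k(W)^\delta\subseteq k^{\alg}$. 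This translation is already essentially asserted in the paragraph introducing Definition~\ref{deltadme-rings}, so here I would merely spell out the dictionary for primitivity, which is the one slightly less transparent case. With these identifications in hand, Corollary~\ref{dgroupDME} — every $D$-group over $k\subseteq K^\delta$ satisfies $\delta$-$\dme$ — says precisely that $(G,s)$ satisfies the equivalence of (i), (ii), (iii) for all $k$-irreducible $D$-subvarieties, which is exactly the assertion that $(R,\delta)$ satisfies the $\delta$-$\dme$ in the sense of Definition~\ref{deltadme-rings}.

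I do not expect any real obstacle: the theorem is a restatement of Corollary~\ref{dgroupDME} through Lemma~\ref{geomdhopf} and the standard equivalence of categories, so the "proof" is a short bookkeeping argument. The only point requiring the tiniest bit of care is the passage to the connected component — one must check that the augmentation-and-components decomposition of the Hopf algebra is respected by $\delta$ (equivalently that the connected component of identity of a $D$-group over the constants is a $D$-subgroup, which is noted in~$\S$\ref{dgroups-section}), and that an irreducible $D$-subvariety of $G$ is, after the appropriate translation, an irreducible $D$-subvariety of $G^\circ$ — but this is exactly the reduction carried out in the proof of Corollary~\ref{dgroupDME}, so I would simply cite it. Thus the write-up is: observe $\delta$ trivial on $k$ lets us take $k\subseteq K^\delta$; invoke Lemma~\ref{geomdhopf} to see $(\spec R,s)$ is a $D$-group; invoke the dictionary translating Definition~\ref{deltadme-geometric} into Definition~\ref{deltadme-rings}; conclude by Corollary~\ref{dgroupDME}.
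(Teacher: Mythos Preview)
Your proposal is correct and follows exactly the paper's own proof: invoke Lemma~\ref{geomdhopf} to realise $(R,\delta)$ as $k[G]$ for a connected affine $D$-group $(G,s)$ with $k\subseteq K^\delta$, then apply Corollary~\ref{dgroupDME}. Your digression about passing to the connected component is unnecessary, since by the paper's convention an affine $k$-algebra is an integral domain, so $G=\spec R$ is already connected.
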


\begin{proof}
By Lemma~\ref{geomdhopf} our differential Hopf algebra is of the form $k[G]$ for some connected affine $D$-group $(G,s)$ with $k\subseteq K^\delta$.
By Corollary~\ref{dgroupDME}  $(G,s)$ satisfies the $\delta$-$\dme$.
So $(k[G],\delta)$ satisfies $\delta$-$\dme$.
\end{proof}

\bigskip
\section{Twisting by a group-like element}
\label{atwist}

\noindent
As it turns out, the application to the classical Dixmier-Moeglin problem that we have in mind, and that will be treated in $\S$\ref{classical}, requires a generalisation of Theorem~\ref{dgroupDME-ring}.
Instead of working with differential Hopf algebras, we need to consider Hopf algebras equipped with derivations that do not quite commute with the coproduct.
Suppose $R$ is a commutative affine Hopf $k$-algebra.
We will use Sweedler notation\footnote{Recall that in Sweedler notation $\sum r_1\otimes r_2$ is used to denote an expression of the form $\sum_{j=1}^d r_{j,1}\otimes r_{j,2}$. We will use Sweedler notation throughout, hopefully without confusion.}
and write $\Delta(r)=\sum r_1\otimes r_2$ for any $r\in R$.
Now, for a $k$-linear derivation $\delta$ to commute with $\Delta$ on $R$ means that for all $r\in R$,
$$\Delta(\delta r)=\sum\delta r_1\otimes r_2+r_1\otimes\delta r_2.$$
We wish to weaken this condition by asking instead simply that there exists some $a\in R$ satisfying $\Delta(a)=a\otimes a$ -- that is, $a$ is a {\em group-like} element of $R$ -- such that for all $r\in R$,
\begin{equation}
\label{a-coder}
\Delta(\delta r)=\sum\delta r_1\otimes r_2+ar_1\otimes\delta r_2.
\end{equation}
That is, we ask $\delta$ to be what Panov~\cite{panov} calls an {\em $a$-coderivation}.
We wish to prove:

\begin{theorem}
\label{twistdgroupDME-ring}
Suppose $k$ is a field of characteristic zero, $R$ is a commutative affine Hopf $k$-algebra, and $\delta$ is a $k$-linear derivation on $R$ that is an $a$-coderivation for some group-like $a\in R$.
Then $(R,\delta)$ satisfies the $\delta$-$\dme$.
\end{theorem}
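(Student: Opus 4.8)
The plan is to follow the same strategy that worked for honest $D$-groups in Theorem~\ref{dgroupDME-ring}: reduce everything to showing that the relevant $D$-subvarieties are compound isotrivial, and then invoke Corollary~\ref{deduce-diffDME}. The first step is to pass from the algebraic side to the geometric side. Since $\delta$ is trivial on $k$ and $R$ is a commutative affine Hopf $k$-algebra, we may write $R=k[G]$ for a connected affine algebraic group $G$ over $k\subseteq K^\delta$; we may also pass to $k^{\mathrm{alg}}$ and hence assume $k$ is algebraically closed. The derivation $\delta$ corresponds to a regular section $s=(\mathrm{id},\overline s):G\to\tau G=TG$. The key translation to carry out is: \emph{$\delta$ being an $a$-coderivation in the sense of~\eqref{a-coder}, where $a\in R=k[G]$ is group-like, is equivalent to $(G,s)$ being an $\widehat a$-twisted $D$-group} in the sense of the introduction, where $\widehat a:G\to\mathbb G_m$ is the character corresponding to the group-like element $a$. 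This is a computation in the spirit of Lemma~\ref{geomdhopf}: unwinding how $\delta$ and $\Delta$ act, one finds that~\eqref{a-coder} is exactly the identity $\overline s(gh)=\overline s(g)h+a(g)\,g\,\overline s(h)$ (using that group-like elements of $k[G]$ are precisely the algebraic characters $G\to\mathbb G_m$, so $a(gh)=a(g)a(h)$ and $a\equiv 1$ on the identity component only if $a=1$; in general $\widehat a$ is a genuine character). Once this identification is in place, the theorem becomes: \emph{every $\widehat a$-twisted $D$-group over the constants satisfies the $\delta$-$\dme$.}

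By Corollary~\ref{deduce-diffDME}, it then suffices to prove that every irreducible $D$-subvariety $W$ of an $\widehat a$-twisted $D$-group $(G,s)$ over the constants (over an arbitrary $\delta$-field extension $k$ of $k_0$) is compound isotrivial. This is the twisted analogue of Proposition~\ref{ci}, and as flagged in the introduction it is the technical heart of Section~\ref{atwist}. I expect the argument to run through an analogue of the normal series $G\rhd Z(G)\rhd H\rhd 0$. The difficulty is that $s$ is no longer a homomorphism, so $Z(G)$, $H$, and the quotient maps need not be $D$-morphisms in the obvious way, and Fact~\ref{simple} does not apply verbatim. The plan is therefore: first analyze how twisting interacts with the algebraic group structure — on a \emph{unipotent} or \emph{commutative} piece the twist by a character can often be absorbed or trivialized (since a character restricted to a unipotent group is trivial, and on a torus one can hope to conjugate the section into a genuine $D$-group structure after a suitable coordinate change), while the reductive/semisimple quotient $G/Z(G)$ should still turn out to be isotrivial by an argument paralleling Buium's and Kowalski--Pillay's. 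Concretely, I would (i) handle the case $\widehat a=1$, which is Proposition~\ref{ci}; (ii) show that on the centre $Z(G)^\circ$, after possibly replacing $s$ by a twist by a $D$-point (which preserves compound isotriviality, as noted before Fact~\ref{simple}) or by composing with the logarithmic derivative machinery, the twisted section still yields an isotrivial quotient by the subgroup where it vanishes; and (iii) show $G/Z(G)$ with the induced structure is isotrivial, giving compound isotriviality in a bounded number of steps, and then transfer to arbitrary $D$-subvarieties via the "product of two generics" trick or the fibration argument already used in Proposition~\ref{ci}.

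The main obstacle, as I see it, is step (ii)–(iii): making sense of the induced "twisted $D$-variety" structures on $Z(G)$, on $H$, and on the quotients, and proving their isotriviality, when the ambient section is not a homomorphism. One likely route is to observe that an $\widehat a$-twisted $D$-group structure on $G$ is the same data as an honest $D$-group structure on the semidirect-type extension $G\rtimes_{\widehat a}\mathbb G_a$ (or on $\mathbb G_m\ltimes G$ with $\mathbb G_m$ acting through $\widehat a$), i.e., twisting by a character is a device for encoding a $D$-group structure on a slightly larger group; then Proposition~\ref{ci} and Fact~\ref{simple} could be applied to that larger honest $D$-group, and the compound isotriviality of $D$-subvarieties of $G$ deduced from that of $D$-subvarieties of the larger group (using that $G$ sits inside it as a $D$-subvariety, up to the $\mathbb G_a$-coordinate, and that fibres of the projection are isotrivial since the section on $\mathbb G_a$-coordinate is affine-linear, hence isotrivial). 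If this embedding trick works cleanly, the whole theorem reduces to the untwisted case and Section~\ref{atwist} becomes bookkeeping; if it does not — e.g.\ because the requisite larger group fails to be algebraic, or the fibres fail to be isotrivial — then one is forced into a direct structural analysis of twisted $D$-groups, which is presumably why the authors describe this passage as "technically quite difficult." Either way, once compound isotriviality of all $D$-subvarieties is established, Corollary~\ref{deduce-diffDME} closes the argument, and then unwinding the dictionary back to $(R,\delta)$ via Definition~\ref{deltadme-rings} gives the stated theorem.
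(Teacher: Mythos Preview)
Your setup and overall framework are correct: the translation to $a$-twisted $D$-groups is exactly what the paper does, and the endgame (compound isotriviality plus the machinery of \S\ref{ci-section}) is indeed how the argument closes. But the mechanism you propose for the middle --- either a twisted analogue of the normal series $G\rhd Z(G)\rhd H\rhd 0$, or an embedding of the twisted $D$-group into an honest one --- is not what the paper does, and there is a genuine missing idea.

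The paper's route is as follows. First, a purely Hopf-algebraic computation (Proposition~\ref{magic}) shows that the group-like element $a$ must satisfy the identity
\[
a\,\delta^2 a \;=\; \tfrac{3}{2}(\delta a)^2 + c\,(a^2 - a^4)
\]
for some $c\in k$; the proof is a rather intricate analysis of the coproduct of the sequence $u_0=a$, $u_1=\delta a/a$, $u_2=\delta u_1 - u_1^2/2$, \dots, and is the ``technically quite difficult'' part alluded to in the introduction. This identity is exactly what is needed to make
\[
\pi:\; g \longmapsto \begin{pmatrix} a(g) & \delta a(g)/a(g)\\ 0 & 1\end{pmatrix}
\]
a $D$-morphism from $(G,s)$ to the explicit two-dimensional twisted $D$-group $(E,t_c)$ of Example~\ref{atwistedex}. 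The kernel $H=\ker\pi$ has $a|_H=1$, so $(H,s)$ is an \emph{honest} $D$-group and Proposition~\ref{ci} applies to its $D$-subvarieties; right translation by a $D$-point carries $H$ $D$-isomorphically onto any other fibre (this uses $a|_H=1$ again). The base $(E,t_c)$ satisfies the $\delta$-$\dme$ simply because $\dim E=2$ (Proposition~\ref{dmeleq2}). One then combines base and fibre via Lemma~\ref{basefibre}. So the paper does \emph{not} run the argument purely through Corollary~\ref{deduce-diffDME}; it mixes compound isotriviality on the fibres with the low-dimensional $\delta$-$\dme$ on the base (though the Remark after the proof notes that, since $(E,t_c)$ is itself compound isotrivial in two steps, one can a posteriori deduce compound isotriviality of all $D$-subvarieties of $G$ in at most five steps).

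Regarding your embedding idea: note that if $G$ sat as a $D$-\emph{subgroup} of an honest $D$-group $\widetilde G$, then $s=\widetilde s|_G$ would already be a group homomorphism, contradicting the twist when $a\neq 1$. So any such construction would have to realise $G$ only as a $D$-subvariety, not a subgroup, of something larger --- and then the structural results of \S\ref{dgroups-section} about quotients and centres would not obviously transfer back. The paper's approach goes in the opposite direction: rather than enlarging $G$, it \emph{projects} $G$ onto the universal two-dimensional twisted example, and the nontrivial content is that such a projection exists (this is precisely Proposition~\ref{magic}).
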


When $a=1$ this is just the case of affine differential Hopf $k$-algebras, and hence is dealt with by Theorem~\ref{dgroupDME-ring}.
The general case requires some work. Throughout this section $k$ is a fixed field of characteristic zero.

Let us begin with a geometric explanation of what this twisting by a group-like element means.
First of all, we have $R=k[G]$ for some connected affine algebraic group $G$ over $k$, with the coproduct $\Delta$ on $R$ induced by the group operation on $G$, and the derivation $\delta$ on $R$ induced by a $D$-variety structure $s:G\to TG$.
Note that $\delta$ being a $k$-derivation implies that $k\subseteq K^\delta$ and so $\tau G=TG$.
Now, as $G$ is an affine algebraic group, we may assume it is an algebraic subgroup of $\operatorname{GL}_n$, so that $TG\subseteq G\times\operatorname{Mat}_n$.
Writing $s=(\id,\overline s)$ where $\overline s:G\to\operatorname{Mat}_n$, we want to express as a property of $\overline s$ what it means for $\delta$ to be an $a$-coderivation.
That $a\in R$ is group-like means that $a:G\to\mathbb G_m$ is a homomorphism of algebraic groups.

\begin{lemma}
Suppose $G\subseteq\gln$ is a connected affine algebraic group over $k$, $a:G\to\mathbb G_m$ is a homomorphism, and $s=(\id,\overline s):G\to TG\subseteq G\times\matn$ is a $D$-variety structure on $G$ over $k$.
Then the corresponding $k$-linear derivation $\delta$ on $k[G]$ is an $a$-coderivation if and only if
\begin{equation}
\label{geom-a-coder}
\overline s(gh)=\overline s(g)h+a(g)g\overline s(h)
\end{equation}
for all $g,h\in G$, where all addition and multiplication is in the sense of matrices.
\end{lemma}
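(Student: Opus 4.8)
The plan is to reduce the asserted equivalence to checking it on the matrix coordinate functions $x_{ij}$ on $G$, and then to compute both sides directly. Write $R=k[G]$, so that $\Delta(x_{ij})=\sum_l x_{il}\otimes x_{lj}$, and recall that the $D$-variety structure $s=(\id,\overline s)$ is precisely the datum that $\delta$ acts on the matrix of coordinate functions $x=(x_{ij})$ by $\delta x_{ij}=\overline s_{ij}$, where $\overline s=(\overline s_{ij})$. Let $a\in R$ be the (nonvanishing) regular function corresponding to the homomorphism $G\to\mathbb G_m$. Introduce the two $k$-linear maps $\phi,\psi\colon R\to R\otimes_k R$ defined by $\phi(r)=\Delta(\delta r)$ and $\psi(r)=\sum\bigl(\delta r_1\otimes r_2+a r_1\otimes\delta r_2\bigr)$. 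By definition $\delta$ is an $a$-coderivation, i.e.\ satisfies~\eqref{a-coder}, if and only if $\phi=\psi$.

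First I would observe that it suffices to verify $\phi(x_{ij})=\psi(x_{ij})$ for all $i,j$. Equip $R\otimes_k R$ with the $R$-module structure $r\cdot\xi:=\Delta(r)\,\xi$. Using $\Delta(rs)=\Delta(r)\Delta(s)$, the Leibniz rule for $\delta$, and the commutativity of $R$ (so that the element $a$, which sits in the first tensor factor, can be moved past the other first-factor terms), one checks that both $\phi$ and $\psi$ are $k$-derivations from $R$ into this $R$-module. Hence $\phi-\psi$ is a $k$-derivation, and therefore vanishes on all of $R$ as soon as it vanishes on a set of $k$-algebra generators of $R$. Since $G\subseteq\gln$, the algebra $R$ is generated over $k$ by the entries $x_{ij}$ together with $\det(x)^{-1}$; and a derivation annihilating every $x_{ij}$ automatically annihilates $\det(x)$, hence also $\det(x)^{-1}$ because $\Delta(\det(x))$ is a unit in $R\otimes_k R$. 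Thus $\phi=\psi$ if and only if $\phi(x_{ij})=\psi(x_{ij})$ for all $i,j$.

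Then I would carry out the computation, identifying $R\otimes_k R$ with $k[G\times G]$ and reading off elements as functions of $(g,h)\in G\times G$. On one hand $\phi(x_{ij})=\Delta(\overline s_{ij})$ is the function $(g,h)\mapsto\overline s_{ij}(gh)$, so that the matrix $\phi(x)$ is the function $(g,h)\mapsto\overline s(gh)$. On the other hand
\[
\psi(x_{ij})=\sum_l\bigl(\overline s_{il}\otimes x_{lj}+a\,x_{il}\otimes\overline s_{lj}\bigr),
\]
which, as a function of $(g,h)$, equals $\sum_l\overline s_{il}(g)\,h_{lj}+a(g)\,g_{il}\,\overline s_{lj}(h)=\bigl(\overline s(g)h\bigr)_{ij}+a(g)\bigl(g\overline s(h)\bigr)_{ij}$; so $\psi(x)$ is the function $(g,h)\mapsto\overline s(g)h+a(g)g\overline s(h)$. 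Since two regular functions on $G\times G$ agree if and only if they agree at every point, $\phi(x_{ij})=\psi(x_{ij})$ for all $i,j$ is exactly the identity~\eqref{geom-a-coder}, which completes the argument.

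The only step requiring genuine care is the claim that $\psi$ is a derivation into $R\otimes_k R$ for the twisted module structure $r\cdot\xi=\Delta(r)\xi$: one must track which tensor slot the element $a$ occupies and observe that it is precisely the commutativity of $R$ that makes the two cross-terms in $\psi(rs)$ match those of $\Delta(r)\psi(s)+\Delta(s)\psi(r)$. Everything else is routine bookkeeping with Sweedler notation and the explicit coproduct $\Delta(x_{ij})=\sum_l x_{il}\otimes x_{lj}$ on $\gln$. (Note that, as in Lemma~\ref{geomdhopf}, group-likeness of $a$ is not actually used in this particular equivalence, only that $a\in R$.)
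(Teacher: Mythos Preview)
Your argument is correct. Both the derivation property of $\psi$ (with respect to the $R$-module structure $r\cdot\xi=\Delta(r)\xi$ on $R\otimes_kR$) and the handling of $\det(x)^{-1}$ check out, and the final computation on the matrix entries is right.

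Your route is genuinely different from the paper's. The paper never reduces to generators; instead it works with an arbitrary $r\in k[G]$ and uses the differential $dr$. Concretely, it identifies $\Delta(\delta r)(g,h)=d_{gh}r\bigl(\overline s(gh)\bigr)$ and, after differentiating matrix multiplication $m$ to get $d_{(g,h)}m(A,B)=Ah+gB$, identifies the right-hand side of~\eqref{a-coder} with $d_{gh}r\bigl(\overline s(g)h+a(g)g\overline s(h)\bigr)$; equality for all $r$ then forces the two tangent vectors to coincide. Your approach trades this differential-geometric step for the purely algebraic observation that both $\phi$ and $\psi$ are derivations into the same $R$-module, so agreement on the $x_{ij}$ suffices. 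This is arguably more elementary (no $dr$, no chain rule for $m$), at the cost of the small verification that $\psi$ satisfies Leibniz; the paper's version, on the other hand, makes transparent why the identity $d_{(g,h)}m(A,B)=Ah+gB$ is exactly what produces the matrix form~\eqref{geom-a-coder}. Your closing remark that group-likeness of $a$ is not used is also accurate for the paper's proof.
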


\begin{proof}
Note that for $r\in k[G]$, $\Delta(\delta r)\in k[G\times G]$ is given by
$$\Delta(\delta r)(g,h)=d_{gh}r\big(\overline s(gh)\big)$$
for all $g,h\in G$, where $dr:TG\to\mathbb A^2$ is the differential of $r:G\to\mathbb A^1$.
On the other hand,  writing $\Delta(r)=\sum r_1\otimes r_2$ we have
\begin{eqnarray*}
\sum (\delta r_1\otimes r_2 +a r_1\otimes \delta r_2)(g,h)
&=&
\sum d_gr_1(\overline s(g))\, r_2(h)+a(g)r_1(g)\, d_hr_2(\overline s(h))\\
&=&
d_{(g,h)}\left(\sum r_1\otimes r_2\right)(\overline s(g), a(g)\overline s(h))\\
&=&
d_{(g,h)}(\Delta r)(\overline s(g), a(g)\overline s(h))
\end{eqnarray*}
where the second equality uses the fact that $a(g)$ is a scalar.
Now, as an element of $k[G\times G]$, $\Delta(r)=r\circ m$ where $m:G\times G\to G$ is the restriction of matrix multiplication on $\gln$.
Note that when we differentiate matrix multiplication we get $d_{(g,h)}m(A,B)=Ah+gB$, for all $g,h\in \gln$ and $A,B\in \matn$.
Hence,
\begin{eqnarray*}
\sum (\delta r_1\otimes r_2 +a r_1\otimes \delta r_2)(g,h)
&=&
d_{(g,h)}(\Delta r)(\overline s(g), a(g)\overline s(h))\\
&=&
d_{gh}r\circ d_{(g,h)}m(\overline s(g), a(g)\overline s(h))\\
&=&
d_{gh}r(\overline s(g)h+ga(g)\overline s(h))\\
&=&
d_{gh}r\big(\overline s(g)h+a(g)g\overline s(h)\big).\\
\end{eqnarray*}
Hence, $\delta$ being an $a$-coderivation, that is equation~\eqref{a-coder}, is equivalent to
$$d_{gh}r\big(\overline s(gh)\big)=d_{gh}r\big(\overline s(g)h+a(g)g\overline s(h)\big)$$
for all $r\in k[G]$.
But this implies
$$\overline s(gh)=\overline s(g)h+a(g)g\overline s(h)$$
as desired.
\end{proof}

\begin{definition}
When $G$ is an affine algebraic group and $(G,s)$ is a $D$-variety structure such that~\eqref{geom-a-coder} holds, we will say that $(G,s)$ is an {\em $a$-twisted $D$-group}.
\end{definition}

The following family of examples of $2$-dimensional twisted $D$-groups will play an important role in the proof.

\begin{example}
\label{atwistedex}
Let $c\in k$ be a parameter.
Let $R=k[x,\frac{1}{x}, y]$ with $\delta$ the $k$-linear derivation induced by $\delta(x)=xy$ and $\delta(y)=\frac{y^2}{2}+c(1-x^2)$.
Note that $R$ is the co-ordinate ring of the algebraic subgroup $E\leq\operatorname{GL}_2$ made up of matrices of the form
\[\left( \begin{array}{cc}
x&y\\ 0&1\\ \end{array} \right),\]
and hence is a commutative affine Hopf $k$-algebra.
We denote by $(E,t_c)$ the $D$-variety structure on $E$ induced by $\delta$.
Writing $t_c=(\id,\overline t_c)$, we have
\[\overline t_c\left( \begin{array}{cc}
a&b\\ 0&1\\ \end{array} \right)
=\left( \begin{array}{cc}
ab&\frac{b^2}{2}+c(1-a^2)\\ 0&0\\ \end{array} \right).\]
Now a straightforward computation shows that
\begin{eqnarray*}
\overline t_c\left(
\left( \begin{array}{cc}
a&b\\ 0&1\\ \end{array} \right)
\left( \begin{array}{cc}
a'&b'\\ 0&1\\ \end{array} \right) \right)
&=&
\left(\begin{array}{cc}
a^2a'b'+aa'b&\frac{a^2(b')^2}{2}+abb'+\frac{b^2}{2}+c(1-(aa')^2)\\ 0&0\\ \end{array} \right)\\
&=&
\overline t_c\left( \begin{array}{cc}
a&b\\ 0&1\\ \end{array} \right)
\left( \begin{array}{cc}
a'&b'\\ 0&1\\ \end{array} \right) +
a \left( \begin{array}{cc}
a&b\\ 0&1\\ \end{array} \right)
\overline t_c\left(
\begin{array}{cc}
a'&b'\\ 0&1\\ \end{array} \right).
\end{eqnarray*}
That is, $(E,t_c)$ is an $x$-twisted $D$-group.
(Note that $x\in R$ is group-like.)
Note that since $(E,t_c)$ is not a $D$-group, we cannot use Theorem~\ref{dgroupDME-ring} to deduce the $\delta$-$\dme$.
However, since the Krull dimension is two, $(E,t_c)$ does satisfy the $\delta$-$\dme$ (see Proposition~\ref{dmeleq2}).
\end{example}

Our strategy for proving Theorem~\ref{twistdgroupDME-ring} is to show that every $a$-twisted $D$-group over the constants admits the example described above as an image, with each fibre having the property that every $D$-subvariety is compound isotrivial.
From the $\delta$-$\dme$ for $(E,t_c)$, together with our earlier work around compound isotriviality and maximum proper $D$-subvarieties, we will then be able to conclude that every $a$-twisted $D$-group satisfies the $\delta$-$\dme$.

To relate an arbitrary $a$-twisted $D$-group to one of those considered in Example~\ref{atwistedex}, we will require the following proposition, whose proof is rather technical, and for which it would be nice to give a conceptual explanation.

\begin{proposition}
\label{magic}
Suppose $R$ is a commutative affine Hopf $k$-algebra, and $\delta$ is a $k$-linear derivation on $R$ that is an $a$-coderivation for some group-like $a\in R$.
Then for some $c\in k$ we have $a\delta^2 a=\frac{3}{2}(\delta a)^2+c(a^2-a^4)$.
\end{proposition}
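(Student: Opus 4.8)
The plan is to work geometrically, translating the algebraic identity into a statement about the $a$-twisted $D$-group $(G,s)$ with $R=k[G]$ and $a:G\to\mathbb G_m$ a character, and then exploiting the structure identity $\overline s(gh)=\overline s(g)h+a(g)g\overline s(h)$ from~\eqref{geom-a-coder}. The key observation is that $a$ is a $\delta$-function on $G$ in the following sense: since $a$ is group-like, applying $\delta$ (an $a$-coderivation) to $a$ gives $\Delta(\delta a)=\delta a\otimes a+a\cdot a\otimes \delta a$, and one checks from this, together with the fact that $a$ is a one-dimensional character, that $a^{-1}\delta a$ pulled back along the two projections behaves multiplicatively enough that $\delta a$ satisfies an algebraic differential equation over $k$ of low order. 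More concretely, I would first compute $\delta a$ explicitly: because $a:G\to\mathbb G_m$ is a homomorphism and $s$ satisfies the twisted Leibniz rule, the composite $a\circ$(the flow of $s$) satisfies a Riccati-type equation. The cleanest route is to set $u:=\delta a/a$ (a rational, in fact regular-away-from-zero, function on $G$, and actually I will argue it descends to a function of a single variable) and show $u$ satisfies $\delta u = \tfrac12 u^2 + c(1-a^2)$ for some constant $c\in k$; this is exactly the shape seen in Example~\ref{atwistedex}, where $x$ plays the role of $a$ and $y$ the role of $u$.

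To extract that Riccati equation, here is the mechanism I expect to use. Differentiate the group-like identity $\Delta(a)=a\otimes a$ once with $\delta$ to get the $a$-coderivation identity for $a$ itself, and differentiate a second time, now using~\eqref{a-coder} applied to $\delta a$ in place of $r$, keeping careful track of the asymmetric second term $a r_1\otimes \delta r_2$. Comparing the two sides and using that $\Delta$ is an injective algebra map (so an identity of the form $\sum p_1\otimes p_2=0$ forces the corresponding element of $R$ to vanish, once one contracts using the counit appropriately), one isolates an expression for $\delta^2 a$ in terms of $\delta a$, $a$, and the evaluation of $\delta a$ at the identity. Evaluating at the identity $e\in G$: note $a(e)=1$, and let $\lambda:=(\delta a)(e)$ and $\mu:=(\delta^2 a)(e)$. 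Applying $(\mathrm{id}\otimes\varepsilon)$ and $(\varepsilon\otimes\mathrm{id})$ to the twice-differentiated coproduct identity turns the functional identity into a pointwise one and produces the relation $a\,\delta^2 a=\tfrac32(\delta a)^2 + (\text{linear in }a^2,a^4)$ with coefficients determined by $\lambda,\mu$; setting $c$ to be the resulting constant (which one must check lies in $k$ — it will, being built from $\lambda,\mu\in k$ since $\delta$ is $k$-linear and $a\in R$) gives $a\delta^2 a=\tfrac32(\delta a)^2+c(a^2-a^4)$. The normalization so that there is no $a^3$ term and the $\tfrac32$ coefficient both fall out of the counit identities $\varepsilon(a)=1$ and $(\varepsilon\otimes\mathrm{id})\Delta=\mathrm{id}$ together with the coassociativity of $\Delta$.

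The main obstacle I anticipate is the bookkeeping in the second differentiation: the $a$-coderivation rule is not symmetric, so applying it twice produces cross terms involving $\Delta(a)$, $\Delta(\delta a)$, and $a\otimes\delta a$ paired in several ways, and one has to be disciplined about Sweedler indices — effectively one is working in $R\otimes_k R\otimes_k R$ after a further application of $(\Delta\otimes\mathrm{id})$ or $(\mathrm{id}\otimes\Delta)$ and using coassociativity to match terms. The coefficient $\tfrac32$ is the telltale sign that a genuine computation (rather than a soft argument) is needed here, so I would not expect to avoid it; but the computation is finite and linear once the right contractions (apply $\varepsilon$ to two of the three tensor slots in all possible ways) are chosen. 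An alternative, possibly cleaner, packaging: regard $b:=a^{-1}\delta a$ and show directly that $\delta b=\tfrac12 b^2+c(1-a^2)$ by the same counit-contraction argument, then rewrite $\delta^2 a=\delta(ab)=(\delta a)b+a\,\delta b=ab^2+a(\tfrac12 b^2+c(1-a^2))=\tfrac32 ab^2+ca(1-a^2)$, and multiply through by $a$ and use $ab=\delta a$ to obtain $a\,\delta^2 a=\tfrac32(\delta a)^2+c(a^2-a^4)$ exactly. This reduces everything to establishing the single scalar Riccati identity for $b$, which I expect to be the heart of the matter and the step I would write out in full.
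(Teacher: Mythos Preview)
Your approach has a genuine gap: the counit contractions and coassociativity you propose cannot by themselves yield the identity. Carrying out your plan, one computes (exactly as you suggest) that
\[
\Delta(\delta^2 a)=\delta^2 a\otimes a+3a\,\delta a\otimes\delta a+a^3\otimes\delta^2 a,
\]
and equivalently, with $b=u_1=a^{-1}\delta a$ and $u_2=\delta b-\tfrac12 b^2$, that $\Delta(u_2)=u_2\otimes 1+a^2\otimes u_2$. Applying $(\varepsilon\otimes\mathrm{id})$ and $(\mathrm{id}\otimes\varepsilon)$ to the first display returns only $\mu a+3\lambda\,\delta a=0$ and $3\lambda\,a\,\delta a+\mu a^3=0$ (with $\lambda=\varepsilon(\delta a)$, $\mu=\varepsilon(\delta^2 a)$), which merely forces $\lambda=\mu=0$ in the nontrivial case; coassociativity adds nothing, since $\Delta$ is already coassociative and the displayed formula is consistent with it. All you have learned is that $u_2$ is an $(a^2,1)$-skew primitive of $R$. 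That is a linear condition on $u_2$, and in a general commutative Hopf algebra the space of such elements need not be spanned by $1-a^2$: nothing so far has used that $R$ is \emph{affine}. In fact one can freely adjoin algebraically independent variables $u_1,u_2,\dots$ to $k[a,a^{-1}]$ with the coproducts dictated by Lemma~\ref{deltaumeasy} and set $\delta a=au_1$, $\delta u_1=u_2+\tfrac12u_1^2$, $\delta u_m=u_{m+1}$ for $m\ge2$; one checks this is a Hopf algebra with an $a$-coderivation, but $u_2$ is a free generator, not of the form $c(1-a^2)$.

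The paper's proof uses affineness in an essential way: finite generation forces an algebraic dependence among the $u_m$ (Lemma~\ref{lem: fg}), and then a delicate comparison of coproducts (via Lemma~\ref{deltaumeasy}) pushes the first such dependence down to $n\le 2$ and pins its shape. Your Riccati reformulation $\delta b=\tfrac12 b^2+c(1-a^2)$ is exactly the statement $u_2=c(1-a^2)$, so it is not a reduction but a restatement of the hard step; to prove it you will need to bring in finite generation somewhere.
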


We delay the proof of this proposition until we have established the preliminary Lemmas~\ref{deltaumeasy} and~\ref{lem: fg} below, for which we fix a commutative affine Hopf $k$-algebra, $R$, equipped with a $k$-linear derivation, $\delta$, such that $\delta$ is also an $a$-coderivation for some group-like $a\in R$.
As $a$ is group-like it is invertible in $R$.
We fix the following sequence of elements in $R$:
\begin{eqnarray*}
u_0&:=&a\\
u_1&:=&\frac{\delta a}{a}\\
u_2&:=&\delta u_1-\frac{u_1^2}{2}\\
u_m&:=&\delta(u_{m-1}) \ \ \ \ \text{for $m\ge 3$.}
\end{eqnarray*}
Note that the desired identity $a\delta^2 a=\frac{3}{2}(\delta a)^2+c(a^2-a^4)$ is equivalent to $u_2=c(1-a^2)$; this is our eventual aim.

\begin{lemma} 
\label{deltaumeasy}
For all $m\geq 1$, we have
$$\Delta(u_m) = u_m\otimes 1 + a^m\otimes u_m  +  \sum_{j=2}^{m-1}c_{j,m}a^ju_{m-j}\otimes u_j  + \sum f_i\otimes g_i$$
where the $c_{j,m}$ are positive (nonzero) integers, the $f_i\in (u_1,\dots,u_{m-1})^2k[u_0,\dots,u_{m-1}]$, and the $g_i\in k[u_0,\dots,u_{m-1}]$.
\end{lemma}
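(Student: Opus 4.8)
The plan is to prove the formula by induction on $m$, the driving tool being the $a$-coderivation identity~\eqref{a-coder} applied to $u_{m-1}$, together with the multiplicativity of $\Delta$ and the defining relations $\delta a=au_1$, $\delta u_1=u_2+\frac{1}{2}u_1^2$, and $\delta u_j=u_{j+1}$ for $j\geq 2$. The base cases are dispatched by direct computation. For $m=1$: group-likeness gives $\Delta(a)=a\otimes a$, so \eqref{a-coder} yields $\Delta(\delta a)=\delta a\otimes a+a^2\otimes\delta a$, and multiplying by $\Delta(a^{-1})=a^{-1}\otimes a^{-1}$ gives $\Delta(u_1)=u_1\otimes 1+a\otimes u_1$, which is the claimed formula with empty middle and error sums. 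For $m=2$: expand $\Delta(u_2)=\Delta(\delta u_1)-\frac{1}{2}\Delta(u_1)^2$ using the $m=1$ formula; everything collects into $u_2\otimes 1+a^2\otimes u_2$ except for a single cross term $(\delta a-au_1)\otimes u_1$, which vanishes since $\delta a=au_1$.

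For the inductive step assume $m\geq 3$, so $u_m=\delta u_{m-1}$, and write $\Delta(u_{m-1})=\sum_\alpha p_\alpha\otimes q_\alpha$ for the expression given by the induction hypothesis. Then \eqref{a-coder} gives $\Delta(u_m)=\sum_\alpha\big(\delta p_\alpha\otimes q_\alpha+a\,p_\alpha\otimes\delta q_\alpha\big)$, and I would sort each Leibniz summand as follows. From the term $u_{m-1}\otimes 1$ one gets only $\delta u_{m-1}\otimes 1=u_m\otimes 1$. From $a^{m-1}\otimes u_{m-1}$ one gets $a\cdot a^{m-1}\otimes\delta u_{m-1}=a^m\otimes u_m$ together with $\delta(a^{m-1})\otimes u_{m-1}=(m-1)a^{m-1}u_1\otimes u_{m-1}$ (using $\delta a=au_1$), the latter being the $j=m-1$ term of the new middle sum. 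From a middle term $c_{j,m-1}a^j u_{m-1-j}\otimes u_j$ of the induction hypothesis, the relations $\delta(a^j u_{m-1-j})=j\,a^j u_1 u_{m-1-j}+a^j\,\delta u_{m-1-j}$ and $\delta u_j=u_{j+1}$ (valid as $j\geq 2$) produce: a term $c_{j,m-1}a^j u_{m-j}\otimes u_j$ feeding the $j$-slot of the new middle sum (when $m-1-j=1$ one uses $\delta u_1=u_2+\frac{1}{2}u_1^2$, whose leading piece is $u_2=u_{m-j}$, so it still lands here); a term $c_{j,m-1}a^{j+1}u_{m-1-j}\otimes u_{j+1}$ feeding its $(j+1)$-slot; and a term $c_{j,m-1}\,j\,a^j u_1 u_{m-1-j}\otimes u_j$ (plus, when $m-1-j=1$, the extra $\frac{1}{2}c_{j,m-1}a^j u_1^2\otimes u_j$) which is divisible by a product of two elements among $u_1,\dots,u_{m-1}$ and hence goes into the error sum. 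Finally, an error term $f_i\otimes g_i$ of the induction hypothesis maps to error terms, because $\delta$ raises $u$-indices by at most one and, via the same three defining relations, carries $k[u_0,\dots,u_{m-2}]$ into $k[u_0,\dots,u_{m-1}]$ and $(u_1,\dots,u_{m-2})^2 k[u_0,\dots,u_{m-2}]$ into $(u_1,\dots,u_{m-1})^2 k[u_0,\dots,u_{m-1}]$.

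Collecting these contributions gives precisely the claimed shape, and the new coefficient $c_{j,m}$ is a sum, with positive integer weights, of some of $m-1$, $c_{j,m-1}$ and $c_{j-1,m-1}$, hence again a positive integer. Two points make this work cleanly and should be verified with care: $u_m$ occurs in no summand other than the two leading ones (which is where I expect to have to be most attentive to the index bookkeeping), and the only non-integral coefficients arising anywhere -- those produced by the $\frac{1}{2}u_1^2$ in $\delta u_1$ -- are attached exclusively to error summands, never to a middle slot, so they cannot corrupt the $c_{j,m}$. The main obstacle is thus not conceptual but organisational: one must check that every Leibniz summand is classified correctly, in particular that nothing quadratic in $u_1,\dots,u_{m-1}$ slips into a middle slot, and that the boundary index $j=m-2$ (where $u_{m-1-j}$ is $u_1$ and the nonstandard rule $\delta u_1=u_2+\frac{1}{2}u_1^2$ applies) produces exactly the term needed for the $(m-2)$-slot plus an admissible error term.
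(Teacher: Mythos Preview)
Your proposal is correct and follows essentially the same approach as the paper: direct computation for $m=1,2$, then induction via the $a$-coderivation identity applied to the inductively known expression for $\Delta(u_{m-1})$. The paper in fact carries out only the base cases explicitly and then declares the inductive step ``a straightforward brute force computation that we leave to the reader''; you have supplied precisely that computation, with the correct bookkeeping of where each Leibniz summand lands and the correct observation that the non-integral $\tfrac{1}{2}$ arising from $\delta u_1$ only ever contaminates error terms.
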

\begin{proof}
We can compute the coproducts of the elements $u_0, u_1,\ldots $ using the fact that $a=u_0$ is group-like and $\delta$ is an $a$-coderivation:
\begin{eqnarray*}
\Delta(u_0)&=&a\otimes a\\
\Delta(u_1)&=&(\frac{1}{a}\otimes\frac{1}{a})(\delta a\otimes a+a^2\otimes\delta a)\\
&=& u_1\otimes 1+a\otimes u_1\\
\Delta(u_2)&=&\delta u_1\otimes 1+\delta a\otimes u_1+a^2\otimes\delta u_1-\frac{u_1^2}{2}\otimes 1-au_1\otimes u_1-a^2\otimes \frac{u_1^2}{2}\\
&=&
u_2\otimes 1+a^2\otimes u_2
\end{eqnarray*}
Then for $m=1,2$, the conclusion of the statement of the lemma follows from the above computations with $f_i=g_i=0$ and the middle sum being empty.
Now one computes $\Delta(u_{m+1})=\Delta(\delta u_m)$ for $m\geq 2$, using the inductively given expression for $\Delta(u_m)$ and the fact that $\delta$ is an $a$-coderivation.
The rest is a straightforward brute force computation that we leave to the reader.
\end{proof}

\begin{lemma}
\label{lem: fg}
There exist $n\ge 1$, a polynomial $P\in k[u_0,\ldots ,u_{n-1}]$ and some $r\ge 0$ such that
$$u_n = \frac{P(u_0,\ldots ,u_{n-1})}{u_0^r}.$$
\end{lemma}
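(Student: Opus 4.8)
The plan is to produce an $n\ge 1$ for which $u_n$ already lies in the finitely generated subalgebra $k[u_0,u_0^{-1},u_1,\dots,u_{n-1}]$ of $R$; here I use that $a=u_0$ is invertible in $R$, being group-like, and I note that an element of that subalgebra is exactly one of the form $P(u_0,\dots,u_{n-1})/u_0^r$ with $P\in k[u_0,\dots,u_{n-1}]$ and $r\ge 0$. So, setting $A_m:=k[u_0,u_0^{-1},u_1,\dots,u_m]\subseteq R$ and $A_\infty:=\bigcup_{m\ge 0}A_m=k[u_0,u_0^{-1},u_1,u_2,\dots]$, it suffices to show that the ascending chain $A_0\subseteq A_1\subseteq\cdots$ has two consecutive equal terms, equivalently that $A_\infty$ is a finitely generated $k$-algebra.

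The first step will be to check, using Lemma~\ref{deltaumeasy} and the hypothesis that $u_0=a$ is group-like, that $A_\infty$ is a Hopf subalgebra of $R$. The displayed formula for $\Delta(u_m)$ there has every tensorand in $A_\infty$, and $\Delta(u_0^{\pm 1})=u_0^{\pm 1}\otimes u_0^{\pm 1}$, so $\Delta(A_\infty)\subseteq A_\infty\otimes_k A_\infty$; and that the antipode $S$ preserves $A_\infty$ follows by an induction on $m$, applying $\id\otimes S$ followed by multiplication to that same formula and solving for $S(u_m)$ (legitimate since $u_0$ is invertible). The second step is to invoke the structural fact that a Hopf subalgebra of a commutative affine Hopf algebra over a field is again finitely generated — equivalently, that $\spec A_\infty$ is a quotient algebraic group of the affine algebraic group $\spec R$, and quotients of affine algebraic groups of finite type are of finite type (see Takeuchi's correspondence between Hopf ideals and Hopf subalgebras, together with the construction of such quotients, e.g. in Waterhouse's book). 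Granting this, write $A_\infty=k[v_1,\dots,v_s]$; each $v_i$ involves only finitely many of the $u_j$, so all the $v_i$ lie in a common $A_M$, whence $A_M=A_\infty$. Then $u_{M+1}\in A_M=k[u_0,u_0^{-1},u_1,\dots,u_M]$, and clearing the finitely many negative powers of $u_0$ gives the asserted identity with $n=M+1$.

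I expect the finite-generation input of the second step to be the main obstacle, and I do not see how to avoid the Hopf structure there. A transcendence-degree or Krull-dimension argument is not enough by itself: an ascending chain of finitely generated subalgebras of a noetherian domain, even one of bounded transcendence degree, need not stabilize — witness $k[t]\subseteq k[t^{1/2}]\subseteq k[t^{1/4}]\subseteq\cdots$ inside an algebraic closure of $k(t)$. It is the coproduct — through Lemma~\ref{deltaumeasy}, which promotes $A_\infty$ to a Hopf subalgebra, together with faithful flatness of a commutative Hopf algebra over its Hopf subalgebras and the finiteness of quotients of affine algebraic groups — that forces $A_\infty$ to be finitely generated and hence the chain to terminate.
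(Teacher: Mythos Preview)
Your argument is correct and proves the lemma as stated, but it follows a genuinely different route from the paper. The paper proceeds computationally: it takes the \emph{minimal} $n$ for which $u_0,\ldots,u_n$ are algebraically dependent, and then uses the explicit shape of $\Delta(u_m)$ from Lemma~\ref{deltaumeasy} to argue (i) that the minimal dependence is linear in $u_n$, so $u_n=P/Q$ with $P,Q\in k[u_0,\ldots,u_{n-1}]$ coprime, (ii) that $Q\otimes Q$ divides $\Delta(Q)$ in the UFD $k[u_0,\ldots,u_{n-1}]^{\otimes 2}$, and (iii) that this divisibility forces $Q$ to be a monomial in $u_0$. Your approach instead packages the $u_m$ into $A_\infty=k[u_0^{\pm1},u_1,u_2,\ldots]$, checks via Lemma~\ref{deltaumeasy} and the antipode identity that $A_\infty$ is a Hopf subalgebra of $R$, and then invokes the structural theorem that a Hopf subalgebra of a commutative affine Hopf $k$-algebra is itself affine (faithful flatness over Hopf subalgebras, together with finiteness of quotients of affine algebraic group schemes). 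This is shorter and more conceptual, at the cost of importing a nontrivial black box; the paper's argument is longer but entirely elementary and self-contained within the computations already set up.

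One point worth flagging, though it is not a gap in your proof of the lemma: the paper's construction yields the minimal $n$, so that $u_0,\ldots,u_{n-1}$ are algebraically independent, and the subsequent proof of Proposition~\ref{magic} silently uses this independence when it treats $k[u_0,\ldots,u_{n-1}]^{\otimes 2}$ as a polynomial ring and compares coefficients. Your $n=M+1$ carries no such guarantee. If you plan to plug your $n$ into that argument, you should first replace it by the least $n'\le n$ for which $u_{n'}\in k[u_0^{\pm1},u_1,\ldots,u_{n'-1}]$; minimality then does force $u_0,\ldots,u_{n'-1}$ to be algebraically independent (since the degree-$1$ step of the paper's argument, applied at any smaller index, would otherwise produce an earlier relation of the required form).
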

\begin{proof}
Since $R$ is finitely generated as a $k$-algebra, this sequence $(u_m)$ cannot be algebraically independent over $k$.
Choose $n$ minimal such that $(u_0,\dots,u_n)$ is algebraically dependent over $k$.
Note that if $n=0$ then $a=u_0$ is a constant and so $u_1=u_2=0$ by definition.
So we will assume that $n>0$.

So there is some $d\geq 1$ such that
\begin{equation}
u_n^d +\sum_{i<d} A_i(u_0,\ldots ,u_{n-1}) u_n^i = 0,
\label{eq: u}
\end{equation} with $A_0,\ldots ,A_{d-1}$ rational functions over $k$.
We may assume that $d$ is minimal.
Our first step is to show that $d=1$.

Since $R=k[G]$ for some connected affine algebraic group $G$ over $k$, we have that $R\otimes R$ is a domain. Indeed, $R\otimes R=k[G\times G]$ and $G\times G$ is a connected affine algebraic group.
We can thus work inside the fraction field of $R\otimes R$. Let $F$ be the subfield which is the fraction field of $k[u_0,\dots,u_{n-1}]\otimes_k k[u_0,\dots,u_{n-1}]$.
Note that by the minimality of $d$, $\{1,u_n,\dots,u_n^{d-1}\}$ is linearly independent over $k(u_0,\dots,u_{n-1})$, from which it follows that $\{u_n^i\otimes u_n^j:0\leq i,j<d\}$ is linearly independent over $F$.
Applying $\Delta$ to both sides of Equation (\ref{eq: u}), Lemma~\ref{deltaumeasy} gives us that
$$(u_n\otimes 1 + a^n \otimes u_n)^d \in \sum_{i<d} F\cdot(u_n\otimes 1+ a^n\otimes u_n)^i \subseteq \sum_{i+j<d} F \cdot(u_n^i\otimes u_n^j).$$
On the other hand, $u_n^d\otimes 1$ and 
$a^{nd}\otimes u_n^d$  are also in $\sum_{i+j<d} F \cdot(u_n^i\otimes u_n^j)$ by~(\ref{eq: u}).
It follows that 
$$\sum_{i=1}^{d-1} {d\choose i} a^{n(d-i)} u_n^i \otimes u_n^{d-i}\in \sum_{i+j<d} F\cdot(u_n^i\otimes u_n^j).$$
If $d>1$ then $u_n^{d-1}\otimes u_n$ appears with a nonzero coefficient on the left-hand side but with zero coefficient on the right-hand side.
This contradicts the $F$-linear independence of $(u_n^i\otimes u_n^j:0\leq i,j<d)$.

So $d=1$,  and we have that 
\begin{equation}
u_n = \frac{P(u_0,\ldots ,u_{n-1})}{Q(u_0,\ldots ,u_{n-1})},
\label{eq: uu} 
\end{equation}
for some relatively prime polynomials $P$ and $Q$ over $k$.
We aim to show that $Q$ is a monomial in $u_0$.

First we argue that $Q\otimes Q$ divides $\Delta(Q)$ in $S:=k[u_0,\dots,u_{n-1}]\otimes_kk[u_0,\dots,u_{n-1}]$, which note is the polynomial ring over $k$ in the variables $u_i\otimes1,1\otimes u_j$, and hence is a UFD.
Indeed,
\begin{eqnarray*}
\Delta(P)&=&\Delta(Q)\Delta(u_n) \ \ \ \ \ \ \ \text{by applying $\Delta$ to both sides of Equation (\ref{eq: uu})}\\
&=& \Delta(Q)(u_n\otimes 1 + a^n\otimes u_n +y) \ \ \ \ \ \ \ \text{by Lemma~\ref{deltaumeasy}, for some $y\in S$}\\
&=& \Delta(Q)\left((P/Q)\otimes 1 + a^n\otimes (P/Q) + y\right)
\end{eqnarray*}
 We can then multiply both sides by $1\otimes Q$ to see that
 $\Delta(Q)((P/Q)\otimes Q)\in S$.
 Hence, multiplying by $Q\otimes 1$, we see that $Q\otimes 1$ divides
 $\Delta(Q) (P\otimes Q)=\Delta(Q)(P\otimes 1)(1\otimes Q)$.
 Since $P$ and $Q$ are relatively prime,  $Q\otimes 1$ divides $\Delta(Q)$.
 A similar argument shows that $1\otimes Q$ divides $\Delta(Q)$.  Since we are working in a UFD and $1\otimes Q$ and $Q\otimes 1$ are relatively prime, we see that
 $Q\otimes Q$ divides $\Delta(Q)$, as desired.
 
Let $i\le n-1$ be the largest index for which $u_i$ appears in $Q$.
 Then we can write 
 $Q= \sum_{j=0}^M u_i^j Q_j(u_0,u_1,\ldots ,u_{i-1})$ with $M>0$ and $Q_M$ nonzero.
 So
 $$Q\otimes Q=(u_i^M\otimes u_i^M) (Q_M\otimes Q_M) + 
 \sum_{j,k<M} (u_i^j\otimes u_i^k) (Q_j\otimes Q_k)$$ 
 while, if $i\geq 1$, then
 $$\Delta(Q) = \sum_{j=0}^M \Delta(u_i)^j Q_j(\Delta(u_0),\ldots ,\Delta(u_{i-1})) = \sum_{\ell+m\le M} f_{\ell,m}(u_i^\ell \otimes u_i^m).$$
 where $f_{\ell,m}\in k[u_0,\dots,u_{i-1}]\otimes_kk[u_0,\dots,u_{i-1}]$, by Lemma~\ref{deltaumeasy}.
 (Note that Lemma~\ref{deltaumeasy} fails for $u_0$, so we are using that $i\geq 1$ in the above calculation.)
 But this contradicts $Q\otimes Q$ dividing $\Delta(Q)$, since $u_i^M\otimes u_i^M$ appears in the former while in the latter no $u_i^\ell \otimes u_i^m$ appears with $\ell,m\geq M$.
 So it must be that $i=0$ and we have shown that $Q$ is a polynomial in $u_0$.
 
Multiplying by a nonzero scalar if necessary, we may assume that $Q$ is in fact a polynomial in $u_0$ with leading coefficient $1$. Let $M$ denote the degree of $Q$. Then 
 $Q(u_0)\otimes Q(u_0)$ divides $\Delta(Q)=Q(u_0\otimes u_0)$ in $S$, recalling that $u_0=a$ is group-like.
 Since both $Q(u_0)\otimes Q(u_0)$ and $Q(u_0\otimes u_0)$ are polynomials of total degree $2M$ in the variables $u_0\otimes 1$ and $1\otimes u_0$ and since they both have leading coefficient $1$, we see that they must be the same.
 In particular, $Q(u_0)\otimes Q(u_0)$ is a polynomial in $u_0\otimes u_0$ with leading coefficient $1$, which implies that $Q(u_0)$ is of the form $u_0^r$.
 \end{proof}

\begin{proof}[Proof of Proposition~\ref{magic}]
Let $(R,\delta)$, $a$, and the $u_i$ be as above.
We need to show that $u_2=c(1-a^2)$ for some $c\in k$.
By Lemma \ref{lem: fg} we have that there is some $n\ge 1$, some $r\ge 0$ and some polynomial $P\in k[u_0,\ldots ,u_{n-1}]$ such that
\begin{equation}
u_n = \frac{P(u_0,\ldots ,u_{n-1})}{u_0^r},
\label{eq: uuu} 
\end{equation}
for some polynomial $P$ over $k$. 
Our first step is to show that $n\le 2$.

Let $S:=k[u_0,\dots,u_{n-1}]\otimes_kk[u_0,\dots,u_{n-1}]$.
Let
$$I:=(u_1,\dots,u_{n-1})^2k[u_0,\dots,u_{n-1}]$$
and consider the ideal of $S$ given by 
$$J:=I\otimes k[u_0,\dots,u_{n-1}]+k[u_0,\dots,u_{n-1}]\otimes I.$$
So these are the elements of $S$ in which each monomial has degree at least $2$ in either the variables $u_1\otimes1,\dots,u_{n-1}\otimes 1$, or in the variables $1\otimes u_1,\dots,1\otimes u_{n-1}$.
Using Lemma~\ref{deltaumeasy} we can compute that for $1\leq i,j,\ell\leq n-1$
\begin{equation}
\label{eq: Delta3}
\Delta(u_i u_ju_\ell) \in J.
\end{equation}
Moreover, for $1\leq i,j\leq n-1$, Lemma~\ref{deltaumeasy} gives
\begin{equation}
\label{eq: Delta}
\Delta(u_i u_j) = u_0^ju_i\otimes u_j+u_0^iu_j\otimes u_i  \mod J.
\end{equation}
Now, write the polynomial $P$ of equation~(\ref{eq: uuu}) as
$$P= P_0(u_0)+\sum_{i=1}^{n-1} P_i(u_0) u_i + \sum_{1\leq i\leq j\leq n-1}P_{i,j}(u_0)u_iu_j+H,$$
where $H$ is of degree at least three in $u_1,\dots,u_{n-1}$.
Applying $\Delta$ to both sides of equation~(\ref{eq: uuu}) we get $(u_0\otimes u_0)^r\Delta(u_n)
=
\Delta(P)$.
We therefore have:
\begin{equation}
\label{eq: cc}
\begin{aligned}
(u_0\otimes u_0)^r\Delta(u_n)
={}
& P_0(u_0\otimes u_0)+\sum_{i=1}^{n-1} P_i(u_0\otimes u_0)\Delta(u_i) \ +\\
& \sum_{i\leq j}P_{i,j}(u_0\otimes u_0)\Delta(u_iu_j)+\Delta(H).
\end{aligned}
\end{equation}
We claim that this forces $P_i=0$ for all $i=1,\dots,n-1$.
To prove this, note that by  Lemma~\ref{deltaumeasy} and equation~(\ref{eq: uuu}), both sides of equation~(\ref{eq: cc}) are elements of the polynomial ring $k(u_0\otimes 1, 1\otimes u_0)[u_1\otimes 1,\ldots ,u_{n-1}\otimes 1, 1\otimes u_1,\ldots ,1\otimes u_{n-1}]$.
We first compute, for both sides of~(\ref{eq: cc}), the coefficient of $u_i\otimes 1$.
On the right-hand side, using equations~(\ref{eq: Delta3}) and~(\ref{eq: Delta}), the only term that contributes is $P_i(u_0\otimes u_0)\Delta(u_i)$.
By Lemma~\ref{deltaumeasy}, that contribution is $P_i(u_0\otimes u_0)$.
On the left-hand side, using Lemma~\ref{deltaumeasy} and equation~(\ref{eq: uuu}), the coefficient of $u_i\otimes 1$ is $\displaystyle (u_0\otimes u_0)^r\left(\frac{P_i(u_0)}{u_0^{-r}}\otimes 1\right)$.
So $P_i(u_0)\otimes u_0^r=P_i(u_0\otimes u_0)$.
This forces $P_i=du_0^r$ for some $d\in k$.
On the other hand, comparing the coefficient of $1\otimes u_i$ on both sides of equation~(\ref{eq: cc}) we have that $u_0^{r+n}\otimes P_i(u_0)=P_i(u_0\otimes u_0)(u_0^i\otimes 1)$.
Plugging in $P_i=du_0^r$ we get that $d(u_0^{r+n}\otimes u_0^r)=d(u_0^{r+i}\otimes u_0^r)$.
As $i<n$, this forces $d=0$ and hence $P_i=0$.

Equation~(\ref{eq: cc}) therefore becomes
\begin{equation*}
\label{eq: uuudelta}
(u_0\otimes u_0)^r\Delta(u_n)=P_0(u_0\otimes u_0)+\sum_{1\leq i\leq j\leq n-1}P_{i,j}(u_0\otimes u_0)(u_0^ju_i\otimes u_j+u_0^iu_j\otimes u_i) \mod J.
\end{equation*}
Assume towards a contradiction that $n\geq 3$.
Then by Lemma~\ref{deltaumeasy} we must have $u_1\otimes u_{n-1}$ appearing in $\Delta(u_n)$ on the left with a nonzero coefficient.
So $P_{1,n-1}\neq 0$.
But then $P_{1,n-1}(u_0\otimes u_0)(u_0u_{n-1}\otimes u_1)$ appears on the right, while
it does not appear on the left since $u_{n-1}\otimes u_1$ does not appear in $\Delta(u_n)$ by Lemma~\ref{deltaumeasy}.

This contradiction proves that $n\leq 2$.
Suppose $n=1$.
Then equation~(\ref{eq: uuu}) says $u_1=\frac{P(u_0)}{u_0^r}$.
Applying $\Delta$ to both sides yields
$$P(u_0)\otimes u_0^r+u_0^{r+1}\otimes P(u_0)=P(u_0\otimes u_0)$$
which is only possible if $P_0=0$.
Hence $u_1=u_2=0$, as desired.

So we are left to consider the case when $n=2$.
Equation~(\ref{eq: uuu}) becomes
\begin{equation*}
u_2 = \frac{1}{u_0^r}\sum_{j=0}^M P_j(u_0) u_1^j
\end{equation*}
with $M\ge 1$, the $P_j$ are polynomials over $k$, and $P_M$ is nonzero.
Multiplying by $a^r$ (recall that $u_0=a$) and applying $\Delta$ gives
$$(a^r\otimes a^r)\left(u_2\otimes 1+ a^2\otimes u_2\right) = \sum_{j=0}^M P_j(a\otimes a) (u_1\otimes 1+ a\otimes u_1)^j$$
which we can write as 
$$\left(\sum_{j=0}^M P_j(a) u_1^j\otimes a^r + \sum_{j=0}^M a^{r+2} \otimes P_j(a) u_1^j\right) =  \sum_{j=0}^M P_j(a\otimes a) (u_1\otimes 1+ a\otimes u_1)^j.$$
Notice that if $M> 1$ then the right-hand side involves terms with $u_1^i \otimes u_1^j$ with $i,j\ge 1$ while the left-hand side does not, and so we cannot have equality.  Thus $M=1$.
Writing out the above equation with this in mind we get that
$$P_0(a)\otimes a^r + P_1(a)u_1\otimes a^r + a^{r+2}\otimes P_0(a)+a^{r+2}\otimes P_1(a)u_1$$ is equal to
$$P_0(a\otimes a) + P_1(a\otimes a)(u_1\otimes 1 + a\otimes u_1).$$
We look at this as an equation in $k[a\otimes 1,1\otimes a][u_1\otimes 1,1\otimes u_1]$. Then taking the coefficient of $u_1\otimes 1$ gives that
$$P_1(a)\otimes a^r = P_1(a\otimes a),$$
which can only occur if $P_1=da^r$ for some $d\in k$.  Then
computing the coefficient of $1\otimes u_1$ gives $a^{r+2}\otimes d a^r= d(a^{r+1}\otimes a^r)$, and so $d=0$.
Hence $P_1=0$.
Now taking the constant coefficient (regarding constants as being in $k[a\otimes 1,1\otimes a]$) gives that
$$P_0(a)\otimes a^r + a^{r+2}\otimes P_0(a) = P_0(a\otimes a).$$ 
Now write $P_0(t)=\sum_{j=0}^L p_j t^j$.   Then we have
$$\sum_{j=0}^L p_j (a^j\otimes a^r + a^{r+2}\otimes a^j - a^j \otimes a^j) = 0.$$
Notice that if $j\not\in \{r,r+2\}$ then we have that the coefficient of $a^j \otimes a^j$ on the left-hand side is equal to $p_j$ whereas the right-hand side is zero and so $p_j=0$.  It follows that $P_0(t) = p_r t^r + p_{r+2} t^{r+2}$.  
Then
$$0=\sum_{j=0}^L p_j (a^j\otimes a^r + a^{r+2}\otimes a^j - a^j \otimes a^j) =  p_r a^{r+2}\otimes a^r + p_{r+2} a^{r+2}\otimes a^r.$$
This forces $p_r=p_{r+2}$ and so we see $P_0(t)=c(t^r-t^{r+2})$ for some constant $c\in k$.

So $u_2=\frac{1}{u_0^r}\big(P_0(u_0)+P_1(u_0)u_1\big)=c(1-u_0^2)=c(1-a^2)$, as desired.
\end{proof}

The following is a geometric interpretation of the proposition.

\begin{proposition}
\label{toe}
Suppose $k\subseteq K^\delta$ and $(G,s)$ is an affine connected $a$-twisted $D$-group over $k$ where $a\in k[G]$ is group-like.
Then 
$$g\longmapsto\left(\begin{array}{cc} a(g) & \frac{\delta a(g)}{a(g)}\\ 0 & 1\\ \end{array} \right)$$
defines a homomorphism $\pi:G\to E$ where $E\leq\operatorname{GL}_2$ is the algebraic subgroup made up of matrices of the form
$\displaystyle \left( \begin{array}{cc}
x&y\\ 0&1\\ \end{array} \right)$.
Moreover, there exists some $c\in k$ such that if $(E,t_c)$ is the $a$-twisted $D$-group from Example~\ref{atwistedex}, then $\pi:(G,s)\to (E,t_c)$ is a $D$-morphism.
\end{proposition}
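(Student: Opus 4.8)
The plan is to verify in turn the three things asserted about $\pi$: that it is a well-defined morphism of varieties with image in $E$, that it is a group homomorphism, and that after choosing $c$ appropriately it is a $D$-morphism. The first two are direct computations; essentially all the genuine content sits in the third, where the work has already been done in Proposition~\ref{magic}.

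For the first point, note that since $a\in R:=k[G]$ is group-like it is a unit in $R$, so $u_1:=\frac{\delta a}{a}\in R$ is a regular function on $G$, and hence $g\mapsto\left(\begin{smallmatrix} a(g)&u_1(g)\\ 0&1\end{smallmatrix}\right)$ defines a morphism of varieties $G\to\gln$ whose image visibly lies in $E$; call it $\pi$. Dually, $\pi$ is the morphism whose comorphism $\pi^*\colon k[E]=k[x,\tfrac1x,y]\to R$ sends $x\mapsto a$ and $y\mapsto u_1$. For the second point, it suffices to check the two scalar identities $a(gh)=a(g)a(h)$ and $u_1(gh)=u_1(g)+a(g)u_1(h)$ for all $g,h\in G$, since multiplying the corresponding $2\times 2$ matrices then gives $\pi(g)\pi(h)=\pi(gh)$. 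The first identity holds because $a$ is group-like. The second is exactly the $m=1$ case of Lemma~\ref{deltaumeasy}, i.e.\ the identity $\Delta(u_1)=u_1\otimes 1+a\otimes u_1$ in $R\otimes_k R$; this in turn follows by applying $\Delta$ to $\delta a$, using that $\delta$ is an $a$-coderivation and $\Delta(a)=a\otimes a$, and dividing through by $a\otimes a$ in $\Frac(R\otimes_k R)$.

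For the third point, recall that since $k\subseteq K^\delta$ we have $\tau G=TG$ and $\tau E=TE$, and $\pi\colon(G,s)\to(E,t_c)$ being a $D$-morphism is equivalent, in the affine setting, to the comorphism $\pi^*$ commuting with the two induced derivations: $\pi^*\circ\delta_E=\delta\circ\pi^*$, where by Example~\ref{atwistedex} the derivation $\delta_E$ on $k[E]$ is determined by $\delta_E(x)=xy$ and $\delta_E(y)=\tfrac{y^2}{2}+c(1-x^2)$. Checking this on generators: on $x$ it asks that $\delta a=\pi^*(xy)=a\,u_1$, which is just the definition of $u_1$; on $y$ it asks that $\delta u_1=\pi^*\!\big(\tfrac{y^2}{2}+c(1-x^2)\big)=\tfrac{u_1^2}{2}+c(1-a^2)$, that is $u_2=c(1-a^2)$ in the notation fixed before Proposition~\ref{magic}. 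But Proposition~\ref{magic} supplies exactly such a $c\in k$, since the identity $a\delta^2a=\tfrac32(\delta a)^2+c(a^2-a^4)$ is equivalent to $u_2=c(1-a^2)$. Fixing this $c$, both generator computations hold, so $\pi\colon(G,s)\to(E,t_c)$ is a $D$-morphism, completing the proof.

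In short, the substantive difficulty of this proposition has been pushed into Proposition~\ref{magic}; what remains is bookkeeping, the only point deserving care being the (standard) reformulation of the $D$-morphism condition as ``the comorphism commutes with the derivations'', after which matters collapse onto the single identity that proposition provides.
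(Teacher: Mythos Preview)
Your proof is correct and follows essentially the same approach as the paper: both verify the homomorphism property via the identity $\Delta(u_1)=u_1\otimes 1+a\otimes u_1$ (the paper unpacks this as $\delta a(gh)=\delta a(g)a(h)+a(g)^2\delta a(h)$ and computes the matrix product directly), and both reduce the $D$-morphism claim to the single identity $u_2=c(1-a^2)$ supplied by Proposition~\ref{magic}. The only cosmetic difference is that the paper checks the $D$-morphism condition by showing $\pi$ takes $D$-points to $D$-points, computing $\overline{t_c}(\pi(g))=\delta(\pi(g))$ entrywise, whereas you use the equivalent dual formulation that $\pi^*$ intertwines the derivations on the coordinate rings; these are two sides of the same computation.
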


\begin{proof}
Recall that as $a\in k[G]$ is group-like, $a:G\to\mathbb G_m$ is a homomorphism of algebraic groups.
It follows immediately that $\pi$ is well-defined and does indeed map $G$ to $E$.
We check that it is a group homomorphism: given $g,h\in G$, note first of all that as $\Delta(a)=a\otimes a$ and $\delta$ is an $a$-coderivation we have $\Delta(\delta a)=\delta a\otimes a+a^2\otimes\delta a$ and so
\begin{equation}
\label{deltaagh}
\delta a(gh)
=\Delta(\delta a)(g,h)=\delta a(g) a(h)+a(g)^2\delta a (h).
\end{equation}
We can therefore compute
\begin{eqnarray*}
\pi(g)\pi(h) &=&
\left(\begin{array}{cc} a(g) & \frac{\delta a(g)}{a(g)}\\ 0 & 1\\ \end{array} \right)
\left(\begin{array}{cc} a(h) & \frac{\delta a(h)}{a(h)}\\ 0 & 1\\ \end{array} \right)\\
&=&
\left(\begin{array}{cc} a(gh) & a(g)\frac{\delta a(h)}{a(h)}+\frac{\delta a(g)}{a(g)}\\ 0 & 1\\ \end{array} \right)\\
&=&
\left(\begin{array}{cc} a(gh) & \frac{a(g)^2\delta a (h)+\delta a(g) a(h)}{a(gh)}\\ 0 & 1\\ \end{array} \right)\\
&=&
\left(\begin{array}{cc} a(gh) & \frac{\delta a(gh)}{a(gh)}\\ 0 & 1\\ \end{array} \right)\ \ \ \ \ \ \ \ \ \ \ \ \ \ \ \ \ \text{by~(\ref{deltaagh})}\\
&=&
\pi(gh)
\end{eqnarray*}
where we have used repeatedly that $a(gh)=a(g)a(h)$.  We note that we have not up until this point used the parameter $c\in k$; the reason for this is that the groups $E_c$ are isomorphic as algebraic groups.

It remains to show that $\pi$ is a $D$-morphism from $(G,s)$ to some $(E,t_c)$.
Let $c$ be as given by Proposition \ref{magic}.
It suffices to show that $\pi$ takes $D$-points to $D$-points.
That is, if $g\in (G,s)^\sharp(K)$ then $\left(\begin{array}{cc} a(g) & \frac{\delta a(g)}{a(g)}\\ 0 & 1\\ \end{array} \right)$ should be a $D$-point of $(E,t_c)$.
Writing $t_c=(\id,\overline t_c)$ we have that
\begin{eqnarray*}
\overline t_c\left(\begin{array}{cc} a(g) & \frac{\delta a(g)}{a(g)}\\ 0 & 1\\ \end{array} \right)
&=&
\left(\begin{array}{cc} \delta a(g) & \frac{\delta a(g)^2}{2a(g)^2}+c(1-a(g)^2)\\ 0 & 0\\ \end{array} \right)\ \ \ \text{ by Example~\ref{atwistedex}}\\
&=&
\left(\begin{array}{cc} \delta a(g) & \frac{a(g)\delta^2 a(g)-\delta a (g)^2-c(a(g)^2-a(g)^4)}{a(g)^2}+c(1-a(g)^2)\\ 0 & 0\\ \end{array} \right)\\
&=&
\delta\left(\begin{array}{cc} a(g) & \frac{\delta a(g)}{a(g)}\\ 0 & 1\\ \end{array} \right)
\end{eqnarray*}
where the penultimate step follows from Proposition~\ref{magic} telling us that $a\delta^2 a=\frac{3}{2}(\delta a)^2+c(a^2-a^4)$,   
and in the final equality we are using the fact that as $g$ is a $D$-point of $G$, $\delta(r(g))=(\delta r)(g)$ for all $r\in k[G]$.
This shows that $\pi(g)\in(E,t_c)^\sharp(K)$, as desired.
\end{proof}

We can now complete the proof of the theorem.

\begin{proof}[Proof of Theorem~\ref{twistdgroupDME-ring}]
We have already established that $(R,\delta)$ is the co-ordinate ring of an affine connected $a$-twisted $D$-group $(G,s)$ where $a\in k[G]$ is group-like. Here recall that $k\subseteq K^\delta$.
By Proposition~\ref{knownparts}, it suffices to show that every irreducible $\delta$-rational $D$-subvariety of $G$ over $k$ is $\delta$-locally-closed.

Let $\pi:(G,s)\to (E,t_c)$ be the $D$-morphism from Proposition~\ref{toe}.
We first show that every fibre of this map has the property that all its $D$-subvarieties, over arbitrary $\delta$-field extensions, are compound isotrivial.

Let us start with the fibre above the identity, that is, $H=\ker(\pi)$.
Since $\pi$ is a $D$-morphism, $(H,s)$ is a $D$-subvariety of $(G,s)$.
Here, by abuse of notation, we write $(H,s)$ instead of $(H,s\upharpoonright_H)$.
Since $\pi$ is an algebraic group homomorphism $H$ is an algebraic subgroup of $G$.
It follows that $(H,s)$ is an $a{\upharpoonright}_H$-twisted $D$-group also.
On the other hand, $a{\upharpoonright}_H=1$ by the definition of $\pi$.
So $(H,s)$ is an actual $D$-group.
By Proposition~\ref{ci}, every irreducible $D$-subvariety of $H$, over any $\delta$-field extension of $k$, is compound isotrivial.

What about other fibres of $\pi$ over $D$-points of $(E,t_c)$?
Any such fibre is a $D$-subvariety of $(G,s)$ of the form $Hg$, for some $g\in(G,s)^\sharp(K)$.
Since $(G,s)$ is not necessarily a $D$-group, the multiplication-by-$g$-on-the-right map, $\rho_g:G\to G$, is not necessarily a $D$-automorphism.
Nevertheless, when we restrict this map to $H$ we do get a $D$-isomorphism between $H$ and $Hg$.
To see this we need only check that $\rho_g$ takes $D$-points of $H$ to $D$-points of $Hg$.
Letting $h\in (H,s)^\sharp(K)$ we compute
\begin{eqnarray*}
\overline s(hg)
&=&\overline s(h)g+a(h)h\overline s(g) \ \ \ \ \text{by~(\ref{geom-a-coder})}\\
&=&\overline s(h)g+h\overline s(g) \ \ \ \ \ \ \ \ \ \ \text{as $a{\upharpoonright}_H=1$}\\
&=&\delta(h)g+h\delta(g)\ \ \ \ \ \ \ \ \ \ \text{as $h$ and $g$ are $D$-points}\\
&=&\delta(hg)\ \ \ \ \ \ \ \ \ \ \ \ \ \ \ \text{as $\nabla:G\to TG$ is a group homomorphism}
\end{eqnarray*}
as desired.
So $H$ and $Hg$ are $D$-isomorphic over $k(g)$.
It follows that every fibre of $\pi$ above a $D$-point has the property that all its $D$-subvarieties, over arbitrary $\delta$-field extensions, are compound isotrivial.

Now suppose that $V\subseteq G$ is an irreducible $\delta$-rational $D$-subvariety over $k$.
We need to prove that it has a maximum proper $D$-subvariety over $k$.
Let $W\subseteq E$ be the $D$-subvariety obtained by taking the Zariski closure of the image of $V$ under~$\pi$, and 
consider the dominant $D$-morphism $\pi{\upharpoonright}_V:(V,s)\to (W,t_c)$.
Since $k(W)\subseteq k(V)$, $W$ is also $\delta$-rational.
Since $(E,t_c)$ is of dimension two, it satisfies the $\delta$-$\dme$ by Proposition~\ref{dmeleq2}.
Hence $W$ has a maximum proper $D$-subvariety over $k$.
Next, let $\eta$ be a $k$-generic $D$-point of $W$ and consider the fibre $V_\eta$.
Note that $V_\eta$ is $\delta$-rational since $k(\eta)(V_\eta)=k(V)$.
But $V_\eta$ is a $D$-subvariety of the fibre of $\pi:(G,s)\to (E,t_c)$ above the $D$-point $\eta$, and hence as we have argued above, is compound isotrivial.
So, by Proposition~\ref{compoundisotrivial}, $V_\eta$ has a maximum proper $D$-suvariety over $k(\eta)$.
We have shown that both the image and the generic fibre have maximum proper $D$-subvarieties, and so by Lemma~\ref{equivisolated}, $(V,s)$ has a maximum proper $D$-subvariety over~$k$, as desired.
\end{proof}

\begin{remark}
In the end of above proof we could also have used the fact that $(E,t_c)$, while not in general isotrivial, is compound isotrivial in two steps.
This was observed by Ruizhang Jin, in whose PhD thesis this example will be worked out.
In any case, using the compound isotriviality of $(E,t_c)$ the above arguments actually give that every $D$-subvariety of $(G,s)$ is compound isotrivial (in at most five steps) from which it follows by Corollary~\ref{deduce-diffDME} that $(G,s)$ satisfies the $\delta$-$\dme$.
\end{remark}

\bigskip
\section{The $\dme$ for Ore extensions of commutative Hopf agebras}
\label{classical}

\noindent
We will now apply the results of the previous sections to the classical study of certain (noncommutative) Hopf agebras.
Recall that if $A$ is a noetherian associative algebra over a field $k$ of characteristic zero, then we say that the {\em Dixmier-Moeglin equivalence} ($\dme$) {\em holds for $A$} if for every (two-sided) prime ideal $P$ of $A$, the following are equivalent:
\begin{itemize}
\item[(i)]
$P$ is {\em primitive}: it is the annhilator of a simple left $A$-module.
\item[(ii)]
$P$ is {\em locally closed}: the intersection of all the prime ideals of $A$ that properly contain $P$ is a proper extension of $P$.
\item[(iii)]
$P$ is {\em rational}: the centre of the Goldie quotient ring\footnote{The Goldie quotient is an artinian ring of quotients for any prime noetherian ring that imitates the field of fractions construction for integral domains in the commutative case. See~\cite[Chapter 2]{mcconnell-robson} for details.} $\operatorname{Frac}(A/P)$ is an algebraic field extension of $k$.
\end{itemize}
Of course, for commutative algebras the $\dme$ always holds as the notions of primitive, locally closed and rational all coincide with maximal.

It is known that in any algebra that satisfies the Nullstellensatz 
 locally closed implies primitive and primitive implies rational, see~\cite[II.7.16]{BrownGoodearl}.
Thus, the central question is: when does rational imply locally closed?
Certainly this is not always the case; even in finite Gelfand-Kirillov dimension a counterexample was found in~\cite{pdme}.
In~\cite{BellLeung} the $\dme$ was conjectured specifically about all Hopf algebras of finite Gelfand-Kirillov dimension.

We will show here that the $\dme$ holds for Hopf algebras that arise as certain twisted polynomial rings over commutative Hopf algebras.
Recall that if $R$ is a $k$-algebra equipped with an automorphism~$\sigma$ then a $k$-linear {\em $\sigma$-derivation} is a $k$-linear map $\delta$ satisfying the twisted Leibniz rule:
$$\delta(rs)=\sigma(r)\delta(s)+\delta(r)s.$$
Given $\sigma$ and $\delta$, the {\em Ore extension} of $R$, denoted by $R[x;\sigma,\delta]$ is the ring extension of $R$ with the property that it is a free left $R$-module with basis $\{x^n:n\geq 0\}$ and such that $xr=\sigma(r)x+\delta(r)$ for all $r\in R$.
We aim to prove the $\dme$ for Hopf algebras that arise as the Ore extensions of commutative Hopf algebras.
More precisely,

\begin{theorem}
\label{hodme}
Suppose $k$ is a field of characteristic zero and $R$ is a commutative affine Hopf $k$-algebra equipped with a $k$-algebra automorphism $\sigma$ and a $k$-linear $\sigma$-derivation $\delta$.
Assume that the Ore extension $A:=R[x;\sigma,\delta]$ admits a Hopf algebra structure extending that of $R$.
Then $A$ satisfies the $\dme$.
\end{theorem}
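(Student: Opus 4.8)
The plan is to bring $(R,\sigma,\delta)$ into the shape of a skew polynomial ring $R'[x;\delta']$ to which Theorem~\ref{twistdgroupDME-ring} and Proposition~\ref{ddme-sdme} apply; recall that Proposition~\ref{ddme-sdme} yields the $\dme$ for $R'[x;\delta']$ once $\delta'$-rationality implies $\delta'$-local-closedness for $(R',\delta')$, that this implication is part of the $\delta$-$\dme$ for $(R',\delta')$, and that Theorem~\ref{twistdgroupDME-ring} supplies the latter whenever $\delta'$ is an $a'$-coderivation for some group-like $a'\in R'$. First I would reduce to the case $k=\overline k$. As $R$ is affine it is an integral domain, so $G:=\spec R$ is a connected affine algebraic group over $k$, hence, since $\operatorname{char}k=0$, geometrically integral; thus $\overline R:=R\otimes_k\overline k$ is again a commutative affine Hopf $\overline k$-algebra. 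Since the Ore construction commutes with base change, $A\otimes_k\overline k=\overline R[x;\sigma\otimes\id,\delta\otimes\id]$ is again an Ore extension, and the coproduct, counit and antipode of $A$ base change to a Hopf structure on it extending that of $\overline R$. As the $\dme$ is insensitive to the algebraic extension $\overline k/k$, it suffices to treat $A\otimes_k\overline k$, so we may assume $k=\overline k$.

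Next I would invoke Theorem~\ref{deltax}: after a linear change of the variable $x$ one has $\Delta(x)=a\otimes x+x\otimes b+w$ with $a,b\in R$ each $0$ or group-like and $w\in R\otimes_k R$. Applying $\epsilon$ to the left, and then the right, tensor factor and using $(\epsilon\otimes\id)\Delta(x)=x=(\id\otimes\epsilon)\Delta(x)$ excludes $a=0$ and $b=0$, so $a$ and $b$ are group-like, in particular units of $R$. Replacing $x$ by $b^{-1}x$ — which is again a free generator of $A$ as a left $R$-module and satisfies $(b^{-1}x)r=\sigma(r)(b^{-1}x)+(b^{-1}\delta)(r)$ — we may assume $b=1$, so $\Delta(x)=a\otimes x+x\otimes 1+w$ with $a$ group-like.

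The decisive, and I expect hardest, step is the reduction to $\sigma=\id$. Requiring that $\Delta$ be an algebra homomorphism on $xr=\sigma(r)x+\delta(r)$ and comparing the coefficients of $x\otimes1$, $1\otimes x$ and $1\otimes1$ in $A\otimes A$ (which is free over $R\otimes R$ on the $x^i\otimes x^j$) forces $\Delta\circ\sigma=(\sigma\otimes\id)\circ\Delta$; hence $\sigma$ is the left winding automorphism $r\mapsto\sum(\epsilon\circ\sigma)(r_1)r_2$ attached to the character $\epsilon\circ\sigma$ of $R$ — geometrically, translation by a central point of $G$ lying in $\ker(a\colon G\to\mathbb G_m)$. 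This is precisely the setting of Brown et al.~\cite{bozz}, and it is their structure theory for Hopf Ore extensions that makes the passage to $\sigma=\id$ possible: one uses it to present $A$, as a $k$-algebra, as an Ore extension of a commutative affine Hopf algebra $R'$ with trivial automorphism, in which — after the normalisation above — the derivation $\delta'$ is an $a'$-coderivation for the remaining group-like $a'\in R'$. The obstacle is that the Hopf axioms bind $\sigma$, $\delta$, $a$ and $w$ together tightly, so that extracting a genuinely untwisted Ore presentation is delicate; it is here that the affineness of $R$ and the assumption $k=\overline k$ do real work.

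Finally, with $\sigma=\id$ and $\delta'$ an $a'$-coderivation, Theorem~\ref{twistdgroupDME-ring} gives that $(R',\delta')$ satisfies the $\delta$-$\dme$, so in particular every $\delta'$-rational prime $\delta'$-ideal of $R'$ is $\delta'$-locally closed; Proposition~\ref{ddme-sdme} then yields the $\dme$ for $R'[x;\delta']$, that is, for $A\otimes_k\overline k$, and hence for $A$.
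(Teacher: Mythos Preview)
Your proof has a genuine gap at the step you yourself call ``decisive'': the reduction to $\sigma=\id$. You assert that the structure theory of Brown et al.~\cite{bozz} allows one to present $A$ as an Ore extension $R'[x;\delta']$ of a \emph{commutative} affine Hopf algebra $R'$ with trivial automorphism and $\delta'$ an $a'$-coderivation, but you give no construction of $R'$, and no such result appears in~\cite{bozz}. When $\sigma$ is translation by a non-identity central element (as Lemma~\ref{lem: translation} forces), the relation $xr=\sigma(r)x+\delta(r)$ is genuinely of difference type, and there is no evident way to absorb $\sigma$ into a change of commutative base ring. The constraint $\Delta\circ\sigma=(\sigma\otimes\id)\circ\Delta$ you derive is correct (indeed it is part of Corollary~\ref{ourbozz}(2)), but it identifies $\sigma$ as a winding automorphism --- it does not trivialise it.

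The paper proceeds quite differently when $\sigma\neq\id$: it does \emph{not} reduce to $\sigma=\id$ but rather to the case where $\delta$ is \emph{inner}, which is essentially the case $\delta=0$ handled by a separate result~\cite{BWW}. The argument works prime by prime: given a rational prime $P$, one uses Goodearl's trichotomy (Fact~\ref{fundamental}) to control $I=P\cap R$, passes to $\overline R=R/I$ (which need only be reduced, not a domain), and then localises at a $\sigma$-eigenvector $f$ in a $\sigma$-stable frame to force $\delta$ inner via Lemma~\ref{factinner}. The resulting localisation satisfies the $\dme$ by Proposition~\ref{inner}, and a partition of the primes above $P$ into three sets $S_1,S_2,S_3$ finishes the argument. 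This also explains the reduction to $k$ \emph{uncountable} (needed for~\cite{BWW}), which your outline omits. Finally, your claim that ``the $\dme$ is insensitive to the algebraic extension $\overline k/k$'' is too quick: the paper invokes Irving--Small reduction~\cite{IS,Rowen} for this descent, and it is not a triviality.
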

\noindent
This is Theorem~B2 of the introduction.
Its proof is preceded by a number of preliminaries.

\medskip
\subsection{Hopf Ore extensions}
In this section we prove a result (Corollary~\ref{ourbozz}, below) that severely restricts what $(R,\Delta,\sigma,\delta)$ can be if $A=R[x;\sigma,\delta]$ is to admit a Hopf algebra structure extending that on $R$.
Actually this was already done by Brown et al. in~\cite{bozz}, answering a question of Panov~\cite{panov}, in a more general context where $R$ is not necessarily commutative, but under the additional assumption on $A$ that
\begin{equation}
\label{bozzcondition}
\Delta(x)=a\otimes x+x\otimes b+v(x\otimes x)+w
\end{equation}
for some $a,b\in R$ and $v,w\in R\otimes_k R$.
When~(\ref{bozzcondition}) holds, possibly after a change of the variable $x$, Brown et al. call $R[x;\sigma,\delta]$ a {\em Hopf Ore} extension.
They ask if every Ore extension admitting a Hopf algebra structure extending that on $R$ is a Hopf Ore extension.
We prove that this is the case, in a strong way, when $R$ is commutative and affine (this is Theorem~C of the introduction):

\begin{theorem}
\label{deltax}
Suppose $k$ is an algebraically closed field of characteristic zero and $R$ is a commutative affine Hopf $k$-algebra equipped with a $k$-algebra automorphism $\sigma$ and a $k$-linear $\sigma$-derivation $\delta$.
If $R[x;\sigma,\delta]$ admits a Hopf algebra structure extending that of $R$ then, after a linear change of the variable $x$, 
$$\Delta(x)=a\otimes x+x\otimes b+w$$
for some $a,b\in R$, each of which is either $0$ or group-like, and some $w\in R\otimes_k R$.
\end{theorem}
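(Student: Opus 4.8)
The plan is to work inside the tensor square $A\otimes_k A$, realised as an iterated Ore extension over the commutative domain $R\otimes_k R$, and to pin down $\Delta(x)$ by playing the algebra-homomorphism property of $\Delta$ against the counit and coassociativity axioms.

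First I would fix coordinates. Since $A$ is a free left $R$-module on the powers of $x$, the coalgebra $A\otimes_k A$ is the iterated Ore extension $(R\otimes_k R)[x\otimes 1;\ \sigma\otimes\id,\ \delta\otimes\id][1\otimes x;\ \id\otimes\sigma,\ \id\otimes\delta]$, in which $X:=x\otimes 1$ and $Y:=1\otimes x$ commute and which is free as a left $(R\otimes_k R)$-module on the monomials $X^iY^j$; and, exactly as in the proof of Lemma~\ref{lem: fg}, $R\otimes_k R=k[G\times G]$ is an integral domain. Hence $\Delta(x)$ has a unique expansion $\Delta(x)=\sum_{i,j}c_{ij}(x^i\otimes x^j)$ with $c_{ij}\in R\otimes_k R$ almost all zero. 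Replacing $x$ by the linear translate $x-\epsilon(x)$ (still an Ore generator, as $\epsilon(x)\in k$), I may assume $\epsilon(x)=0$; then, using $\epsilon(rx^m)=\epsilon(r)\epsilon(x)^m$, the counit identities $(\epsilon\otimes\id)\Delta(x)=x=(\id\otimes\epsilon)\Delta(x)$ say precisely that $(\epsilon\otimes\id)(c_{0j})$ and $(\id\otimes\epsilon)(c_{j0})$ are $1$ when $j=1$ and $0$ otherwise.

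The second step is to bound the degree of $\Delta(x)$ and to recover the Brown et al.\ normal form. Passing to the associated graded with respect to the $x$-adic (bi-)filtration — so that $\operatorname{gr} A=R[\bar x;\sigma]$ is a skew polynomial ring, and likewise $\operatorname{gr}(A\otimes_k A)$ and $\operatorname{gr}(A^{\otimes 3})$ are twisted polynomial rings over $R\otimes_k R$ and $R^{\otimes 3}$ — one observes that if $\Delta(x)$ had first-slot degree $M\ge 2$ then $(\Delta\otimes\id)\Delta(x)$ would contain a term of first-slot degree $M^2$ coming from the leading term of $\Delta(x)^M$, whereas $(\id\otimes\Delta)\Delta(x)$ has first-slot degree at most $M$; since $\operatorname{gr}(A^{\otimes 3})$ is a domain this leading term cannot cancel, forcing $M\le 1$, and symmetrically in the second slot. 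Thus $c_{ij}=0$ unless $i,j\in\{0,1\}$, so that $\Delta(x)=w+c_{10}(x\otimes 1)+c_{01}(1\otimes x)+v\,(x\otimes x)$ with $w,v,c_{10},c_{01}\in R\otimes_k R$. Now I would apply $\Delta$ to the defining relation $xr=\sigma(r)x+\delta(r)$, that is, $\Delta(x)\Delta(r)=\Delta(\sigma r)\Delta(x)+\Delta(\delta r)$, and compare the coefficients of the various $X^iY^j$; using again that $R\otimes_k R$ is a domain, each nonzero leading coefficient is forced to satisfy a ``twisted primitivity'' identity of the shape $(\sigma^i\otimes\sigma^j)\circ\Delta_R=\Delta_R\circ\sigma$, which on applying $\epsilon$ to a tensor factor says that a power of $\sigma$ is a winding automorphism of $R$. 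Feeding these, together with the counit normalisation, into the sub-leading coefficient equations — and invoking the fact that over an algebraically closed field every unit of $k[G]$ (for connected $G$) is a scalar multiple of a group-like (Rosenlicht) — forces $c_{10}$ and $c_{01}$ to be pure tensors $1\otimes b$ and $a\otimes 1$ for some $a,b\in R$. At this point we have $\Delta(x)=a\otimes x+x\otimes b+v(x\otimes x)+w$, i.e.\ Brown et al.'s condition~\eqref{bozzcondition}, but now established rather than assumed.

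Finally I would feed this normal form back into coassociativity $(\Delta\otimes\id)\Delta(x)=(\id\otimes\Delta)\Delta(x)$. Matching the components that are trilinear in $x$ gives a cocycle-type identity for $v$ in $R^{\otimes 3}$; combining it with the remaining (bilinear) components and with the counit, a nonzero $v$ is seen to force some $R$-affine combination of $x$ to be a nonzero scalar multiple of a group-like element of $A$, hence a unit — which is impossible in an Ore extension over the domain $R$. So $v=0$. With $v=0$ the bilinear comparison is clean: the $(R\otimes R)\otimes x$-part yields $\Delta_R(a)=a\otimes a$ and the $x\otimes(R\otimes R)$-part yields $\Delta_R(b)=b\otimes b$, so, since $\epsilon(a)=\epsilon(b)=1$ by the counit, each of $a$ and $b$ is group-like unless it vanishes. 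This gives $\Delta(x)=a\otimes x+x\otimes b+w$ and exhibits $R[x;\sigma,\delta]$ as a Hopf Ore extension of $R$. I expect the main obstacle to be the bookkeeping in the second step: carrying the monomial coefficients $c_{ij}$ through the iterated-Ore multiplication, and separating the twisted-primitivity identities from the tangle of sub-leading terms — and, relatedly, isolating the precise point (the recognition of the linear coefficients via Rosenlicht's theorem) at which algebraic closedness of $k$ is genuinely used, so that over a general base field one obtains the asserted normal form only after passing to $\bar k$.
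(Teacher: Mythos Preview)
Your proposal has the right skeleton, but two of its load-bearing steps are not justified, and the paper's route is both cleaner and genuinely different.

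\textbf{The gap at the end of your second step.} Comparing coefficients in $\Delta(x)\Delta(r)=\Delta(\sigma r)\Delta(x)+\Delta(\delta r)$ does yield, for each nonzero $c_{ij}$, an identity of the shape $(\sigma^i\otimes\sigma^j)\circ\Delta_R=\Delta_R\circ\sigma$ --- but this constrains $\sigma$, not the coefficients $c_{10},c_{01}$ themselves. It tells you $\sigma$ is a winding automorphism; it does not tell you that $c_{10}\in R\otimes R$ is a pure tensor $1\otimes b$. Your appeal to Rosenlicht here is unmotivated: Rosenlicht says a nowhere-vanishing regular function on a connected group is a scalar multiple of a character, but nothing you have written shows $c_{10}$ is nowhere-vanishing on $G\times G$. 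The paper obtains this nonvanishing from a functional equation coming from \emph{coassociativity} (after first killing $v$): equating the $x\otimes 1\otimes 1$ coefficients gives $(\Delta\otimes\id)(t)\cdot(t\otimes 1)=(\id\otimes\Delta)(t)$, i.e.\ $t(fg,h)t(f,g)=t(f,gh)$, which forces $t$ either to vanish identically or to be a unit in $k[G\times G]$; \emph{then} Rosenlicht applies, and the functional equation itself pins the resulting character down to $1\otimes b$. The paper explicitly remarks that its proof makes no use of $\delta$, so routing through the Ore relation is at best a detour.

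\textbf{The gap in your third step.} Your claim that a nonzero $v$ forces some $R$-affine combination of $x$ to be group-like is plausible in the toy case $R=k$, $v=1\otimes 1$ (where indeed $1+x$ becomes group-like), but you give no mechanism for producing such an element from the cocycle identity $(\Delta\otimes\id)(v)\cdot(v\otimes 1)=(\id\otimes\Delta)(v)\cdot(1\otimes v)$ and the counit alone, and for a general $v\in R\otimes R$ it is far from clear. The paper instead uses the antipode directly: writing $S(x)=a_0+\cdots+a_dx^d$ and reading off the $x^{d+1}$ coefficient of $m\circ(S\otimes\id)\circ\Delta(x)=0$ gives $m\circ(S\otimes\id)(v)=0$, i.e.\ $v(g^{-1},g)=0$ for all $g\in G$. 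Feeding this into the cocycle identity evaluated at $(g,h^{-1},h)$ yields $v(gh^{-1},h)\,v(g,h^{-1})=0$, hence $v=0$ by a Zariski-density argument. Your implicit use of the antipode (``group-likes are units'') is the right instinct, but the paper's explicit use of $S$ is what actually makes the step go through.

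Your degree-bound argument via the associated graded and coassociativity looks fine and is essentially what \cite[\S2.2, Lemma~1]{bozz} does; the paper simply cites that lemma.
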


\begin{proof}
Our starting point is~\cite[$\S2.2$, Lemma~1]{bozz} which says that if $R\otimes_kR$ is a domain (which is true here as $R$ is a commutative domain and $k$ is algebraically closed) then
\begin{equation}
\label{bozz2.2}
\Delta(x)=s(1\otimes x)+t(x\otimes 1)+v(x\otimes x)+w
\end{equation}
 where $s,t,v,w\in R\otimes_kR$.  We let $A=R[x;\sigma,\delta]$ and let $S$ denote the antipode of $A$.  
By making a substitution $x\mapsto x-\lambda$ for some $\lambda\in k$, we may assume that $\epsilon (x)=0$ where $\epsilon:A\to k$ is the counit.
This substitution does not change the form of $\Delta(x)$ given in~(\ref{bozz2.2}).

Our first goal is to show that $v=0$.
Recall that since $R$ is commutative it is of the form $R=k[G]$ for a connected affine algebraic group $G$.
We can therefore view $v\in R\otimes_kR$ as a regular function on $G\times G$.
We show first that $v(g^{-1},g)=0$ for all $g\in G$, and then that in fact $v=0$.

We now consider the antipode $S$. By \cite[Corollary 1]{Skryabin}, $S$ is bijective on $A$ and its restriction to $R$ is bijective on $R$.  Thus we can write
$$S(x) = a_0  + a_1 x+ \cdots + a_d x^d$$ for some $d\geq 1$ and $a_0,\ldots ,a_d\in R$ with $a_d\neq 0$.
Writing $m:A\otimes_kA\to A$ for the homomorphism induced by multiplication, we have the identity
$$m\circ (S\otimes \id)\circ \Delta(x) = \epsilon(x).$$
So, as $\epsilon(x)=0$, we may let $\mu=m\circ (S\otimes \id)$ and use~(\ref{bozz2.2}) to write
$$0 = \mu(s)x+S(x)\mu(t)+S(x)\mu(v)x+\mu(w).$$
Notice that $m\circ (S\otimes\id)(R\otimes R)\subseteq R$ and so if we look at the coefficient of $x^{d+1}$ on the right-hand side, we see that it is $a_d\sigma^d(\mu(v))$.
Since $R\otimes R$ is a domain and $a_d$ is nonzero and $\sigma$ is an automorphism, we see that $\mu(v)=m\circ (S\otimes\id)(v)=0$.
Geometrically, this means precisely that $v(g^{-1},g)=0$ for all $g\in G$.

Next we apply coassociativity, which tells us that $(\Delta\otimes\id)(\Delta(x))=(\id\otimes\Delta)(\Delta(x))$ in $R\otimes_kR\otimes_kR=k[G\times G\times G]$.
Writing this out using~(\ref{bozz2.2}), and equating the coefficients of $x\otimes x\otimes x$, yields
$$(\Delta\otimes\id)(v)\cdot(v\otimes 1)=(\id\otimes\Delta)(v)\cdot(1\otimes v).$$
Evaluating at $(g,h^{-1},h)$ for any fixed $g,h\in G$ we get
$$v(gh^{-1},h)v(g,h^{-1})= v(g,1_G)v(h^{-1},h)=0$$
where the final equality uses what we proved in the previous paragraph.
Now, if $v\neq 0$ then for a Zariski dense set of $(g,h)\in G\times G$, $v(g,h^{-1})\neq 0$.
But then for each such $(g,h)$ the above equation implies that $v(gh^{-1},h)=0$.
Hence, in fact, $v(gh^{-1},h)=0$ for all $(g,h)\in G\times G$.
As every element of $G\times G$ can be written in the form $(gh^{-1},h)$, we have shown that $v=0$.

We have thus proven that
\begin{equation}
\label{bozz2.2nov}
\Delta(x)=s(1\otimes x)+t(x\otimes 1)+w
\end{equation}
 for some $s,t,w\in R\otimes_kR$.
 
We claim now that either $t=0$ or $t=1\otimes b$ for some group-like $b\in R$.
We again apply coassociativity to $x$, this time using~(\ref{bozz2.2nov}) and equating the coefficients of $x\otimes 1\otimes 1$, to get
$$(\Delta\otimes\id)(t)\cdot (t\otimes 1)=(\id\otimes\Delta)(t)$$
The geometric interpretation is that 
\begin{equation}
\label{tfgh}
t(fg,h)t(f,g)=t(f,gh)
\end{equation}
 for all $f,g,h\in G$.

Suppose $t(1_G,g_0)=0$ for some $g_0\in G$.
We show in this case that $t=0$.
Indeed, for all $h\in G$ we have $0=t(g_0,h)t(1_G,g_0)=t(1_G,g_0h)$, by~(\ref{tfgh}) with $f=1_G$.
Hence, $t(1_G,h)=0$ for all $h\in G$.
But then, by~(\ref{tfgh}) with $f=g^{-1}$, we get
$0=t(1_G,h)=t(g^{-1}g,h)t(g^{-1},g)=t(g^{-1},gh)$ 
for all $g,h\in G$.
As every element of $G\times G$ is of the form $(g^{-1},gh)$ for some $g,h\in G$, we have $t=0$, as desired.

Suppose on the contrary that $t(1_G,g)\neq 0$ for every $g\in G$.
Then $t(g,h)=\frac{t(1_G,gh)}{t(1_G,g)}$ is a never vanishing regular function on $G\times G$, and hence $t=\lambda t'$ where $\lambda\in k^*$ and $t':G\times G\to \mathbb G_m$ is an algebraic group homomorphism (see~\cite[Theorem~3]{rosenlicht}).
So $t'=b'\otimes b$ where $b',b\in R$ are group-like.
But then we have
$$\lambda b'(g)b(h)=t(g,h)=\frac{t(1_G,gh)}{t(1_G,g)}=\frac{\lambda b(gh)}{\lambda b(g)}=\frac{\lambda b(g)b(h)}{\lambda b(g)}$$
for all $g,h\in G$.
It follows that $b'=\lambda=1$ and $t=1\otimes b$, as desired.

A similar argument shows that in~(\ref{bozz2.2nov}) either $s=0$ or $s=a\otimes 1$ for some group-like $a\in R$.
This proves the theorem.
\end{proof}

\begin{remark}
It may be worth pointing out that our proof of Theorem~\ref{deltax} made no use of $\delta$.
We used only the properties of a Hopf algebra extension and the fact that $\sigma$ is injective, as well as the fact that every element of $A$ can be written as a left polynomial in $x$ over $R$.
\end{remark}

\begin{corollary}
\label{ourbozz}
Suppose $k$ is an algebraically closed field of characteristic zero and $R$ is a commutative affine Hopf $k$-algebra equipped with a $k$-algebra automorphism $\sigma$ and a $k$-linear $\sigma$-derivation $\delta$.
If $R[x;\sigma,\delta]$ admits a Hopf algebra structure extending that of $R$ then $(R,\Delta,\sigma,\delta)$ must satisfy the following two conditions.
\begin{itemize}
\item[(1)]
There exists $w\in R\otimes_k R$ and a group-like $a\in R$ such that, for all $r\in R$,
$$\Delta(\delta(r)) = \sum \left( \delta(r_1)\otimes r_2 + ar_1\otimes \delta(r_2)\right) + w\big(\Delta(r) -\Delta(\sigma(r))\big)$$
\item[(2)]
There is a character $\chi:R\to k$ such that for all $r\in R$,
$$\sigma(r)=\sum \chi(r_1)r_2 = \sum r_1 \chi(r_2).$$
\end{itemize}
In the above we are using Sweedler notation, writing $\Delta(r)=\sum r_1\otimes r_2$ for all $r\in R$.
\end{corollary}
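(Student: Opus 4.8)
The plan is to obtain both conditions directly from Theorem~\ref{deltax}, combined with the fact that the coproduct is an algebra homomorphism on $A=R[x;\sigma,\delta]$. First I would apply Theorem~\ref{deltax}: after a linear change of the variable $x$ — one that replaces $\delta$ by $\alpha\delta+\beta(\id-\sigma)$ for suitable scalars $\alpha\neq0$, $\beta$, leaves $\sigma$ unchanged, and (a short check) preserves the truth of conditions~(1) and~(2) — we may assume
$$\Delta(x)=a\otimes x+x\otimes b+w$$
with $a,b\in R$ group-like and $w\in R\otimes_kR$. As in the proof of Theorem~\ref{deltax} one also normalises $\epsilon(x)=0$; the counit axiom applied to $x$ then forces $\epsilon(a)=\epsilon(b)=1$ — so that neither $a$ nor $b$ is $0$ — and $(\epsilon\otimes\id)(w)=(\id\otimes\epsilon)(w)=0$.

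Next, $A$ is generated over $R$ by $x$ subject only to $xr=\sigma(r)x+\delta(r)$, so applying $\Delta$ to this relation gives, for every $r\in R$,
$$\Delta(x)\Delta(r)=\Delta(\sigma(r))\Delta(x)+\Delta(\delta(r)).$$
I would substitute the formula for $\Delta(x)$, re-expand the products using $xr_i=\sigma(r_i)x+\delta(r_i)$, and compare components in the grading $A\otimes_kA=\bigoplus_{m,n\geq0}Rx^m\otimes_kRx^n$ by $x$-degree in the two tensor slots. The $(1,0)$- and $(0,1)$-components, after cancelling the units $b$ and $a$ respectively, both collapse to $\Delta(\sigma(r))=(\sigma\otimes\id)\Delta(r)=(\id\otimes\sigma)\Delta(r)$; applying $\epsilon\otimes\id$ and $\id\otimes\epsilon$, and setting $\chi:=\epsilon\circ\sigma$ (a character, being a composite of algebra homomorphisms), yields $\sigma(r)=\sum\chi(r_1)r_2=\sum r_1\chi(r_2)$, which is condition~(2). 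The $(0,0)$-component gives
$$\Delta(\delta(r))=\sum ar_1\otimes\delta(r_2)+\sum\delta(r_1)\otimes br_2+w\big(\Delta(r)-\Delta(\sigma(r))\big),$$
which is exactly condition~(1), with $a$ as the required group-like, \emph{except} that the middle term has $br_2$ where condition~(1) wants $r_2$.

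The remaining step — and the one I expect to be the crux — is therefore to absorb the discrepancy $\sum\delta(r_1)\otimes(b-1)r_2$ into the $w$-term, i.e.\ to produce a single $\eta\in R\otimes_kR$ with $\eta\cdot(\Delta(r)-\Delta(\sigma(r)))=\sum\delta(r_1)\otimes(b-1)r_2$ for all $r$ (one then replaces $w$ by $w+\eta$ in condition~(1)). Here I would exploit that $\id-\sigma$ is itself a $\sigma$-derivation, that by condition~(2) $\Delta(r)-\Delta(\sigma(r))=\sum(r_1-\sigma(r_1))\otimes r_2$, and the $\sigma$-Leibniz rule, to reduce the identity to $r$ ranging over algebra generators of $R$ and write $\eta$ down explicitly (for the torus $R=k[t,t^{-1}]$ one gets $\eta=\tfrac{1}{1-\chi(b)}\,\tfrac{\delta(b)}{b}\otimes(b-1)$). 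Alternatively — and this is likely the intended packaging — once Theorem~\ref{deltax} supplies the special form~\eqref{bozzcondition} of $\Delta(x)$ (even with $v=0$), conditions~(1) and~(2) are precisely the output of the analysis of Brown et al.~\cite{bozz} for Hopf Ore extensions, so the corollary drops out by invoking their work. Either way the only real obstacle is the bookkeeping around the group-like $b$; the rest is the degree-by-degree comparison above.
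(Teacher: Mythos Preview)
Your ``alternative'' route --- apply Theorem~\ref{deltax} to see that $A$ is a Hopf Ore extension in the sense of~\cite{bozz}, then read off conditions~(1) and~(2) from their main theorem --- is precisely the paper's proof, which is just a two-line citation. Your direct computation is a genuine attempt to unpack what~\cite{bozz} does, and for condition~(2) it works cleanly: the $(1,0)$- and $(0,1)$-components do give $\Delta\circ\sigma=(\sigma\otimes\id)\circ\Delta=(\id\otimes\sigma)\circ\Delta$, and applying $\epsilon$ yields~(2).

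For condition~(1), however, there is a real gap in your direct approach. You correctly derive
\[
\Delta(\delta(r))=\sum\delta(r_1)\otimes br_2+\sum ar_1\otimes\delta(r_2)+w\bigl(\Delta(r)-\Delta(\sigma(r))\bigr),
\]
and correctly isolate the discrepancy $\sum\delta(r_1)\otimes(b-1)r_2$. But the proposed fix --- finding a single $\eta\in R\otimes_kR$ with $\eta\cdot(\Delta(r)-\Delta(\sigma(r)))=\sum\delta(r_1)\otimes(b-1)r_2$ for all $r$ --- cannot work in general. When $\sigma=\id$ the left-hand side of this equation is identically zero, while the right-hand side need not vanish unless $b=1$; so no such $\eta$ exists, and your torus example does not generalise. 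Removing the $b$ genuinely requires further input from the Hopf structure (the antipode identities and a further change of variable of the form $x\mapsto b^{-1}x$, which replaces $\delta$ by $b^{-1}\delta$ and then requires transferring the identity back). This is exactly the content of the relevant parts of~\cite{bozz}, which is why the paper simply cites that work rather than redoing it. Your proposal is therefore correct provided you take the citation route for~(1); the self-contained route you sketch does not close.
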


\begin{proof}
Statements~(1) and~(2) are proven for Hopf Ore extensions in~\cite{bozz}.
Indeed, remembering that in our case $R$ is commutative, statement~(2) is just part~(i)(c) of the main theorem of~\cite{bozz} (see also Theorem~2.4(d) of~\cite{bozz}), and statement~(1) is the identity labelled~(21) in~\cite{bozz} which is asserted in part~(i)(d) of the main theorem.
So to prove the corollary it suffices to show that $A=R[x;\sigma,\delta]$ is a Hopf Ore extension, that is, after a change of variable $\Delta(x)$ has the form~(\ref{bozzcondition}) discussed above.
But Theorem~\ref{deltax} gives us an even stronger form for $\Delta(x)$.
\end{proof}

\begin{remark}
The main theorem of~\cite{bozz} also includes a converse; namely, assuming that $(R,\Delta,\sigma,\delta)$ satisfies~(1) and~(2), with $w\in R\otimes_kR$ satisfying two other identities, one can always extend in a natural way the Hopf algebra structure from $R$ to $R[x;\sigma,\delta]$.
This gives many examples to which our Theorem~\ref{hodme} will apply.
\end{remark}

\medskip
\subsection{The case when $\sigma$ is the identity}
When $\sigma=\id$ note that a $\sigma$-derivation is just a derivation.
In this case we write the Ore extension as $R[x;\delta]$; it is the skew polynomial ring in $x$ over $R$ where $xr=rx+\delta(r)$ for all $r\in R$.
Statement~(1) of Corollary~\ref{ourbozz} now says that if $R[x;\delta]$ admits a Hopf algebra structure extending that on $R$, then $\delta$ must have been an $a$-coderivation on $R$.
So Theorem~\ref{twistdgroupDME-ring} applies and we have that $(R,\delta)$ satisifes the $\delta$-$\dme$.
The following proposition relates the $\delta$-$\dme$ for $(R,\delta)$ to the $\dme$ for $R[x;\delta]$.

\begin{proposition}
\label{ddme-sdme}
Suppose $k$ is a field of characteristic zero and $R$ is a commutative affine $k$-algebra equipped with a $k$-linear derivation $\delta$.
If the $\delta$-rational prime $\delta$-ideals of $(R,\delta)$ are $\delta$-locally-closed, then the rational prime ideals of $R[x;\delta]$ are locally closed.

In particular 
if the $\delta$-$\dme$ holds for $(R,\delta)$ then the $\dme$ holds for $R[x;\delta]$.
\end{proposition}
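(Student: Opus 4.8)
The plan is to reduce the statement to the implication ``rational $\Rightarrow$ locally closed'' and then to run a short case analysis driven by the known structure of the prime spectrum of a skew polynomial ring. For the reduction: since $R$ is affine, $A:=R[x;\delta]$ is a noetherian $k$-algebra satisfying the Nullstellensatz, so by~\cite[II.7.16]{BrownGoodearl} locally closed primes of $A$ are primitive and primitive primes of $A$ are rational; hence the ``in particular'' clause will follow from the first assertion, once one observes that the $\delta$-$\dme$ for $(R,\delta)$ contains the implication ``$\delta$-rational $\Rightarrow$ $\delta$-locally closed''. So I would fix a rational prime $P$ of $A$ and put $\mathfrak p:=P\cap R$. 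Then $\mathfrak p$ is a prime $\delta$-ideal of $R$: it is $\delta$-stable because $\delta(r)=xr-rx\in P\cap R$ for $r\in\mathfrak p$, and primeness is the standard fact that contractions of primes along $R\hookrightarrow R[x;\delta]$ are $\delta$-prime, hence (in characteristic zero) prime; cf.~\cite{pdme}. The structural input I would invoke is that the primes of $A$ lying over $\mathfrak p$ are in order-preserving bijection with the primes of $F[x;\delta]$, where $F:=\Frac(R/\mathfrak p)$ carries the induced derivation, this bijection arising by Ore-localising $A/\mathfrak pA\cong (R/\mathfrak p)[x;\delta]$ at $(R/\mathfrak p)\setminus\{0\}$. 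The argument then splits according to whether $\delta$ vanishes on $R/\mathfrak p$.

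In the principal case $\delta$ is nonzero on the field $F$. Here a short computation with leading coefficients shows that a monic normal element of $F[x;\delta]$ of positive degree would force $\delta$ to vanish on $F$; so the only normal elements are the nonzero constants, and since every nonzero two-sided ideal of the (left and right) principal ideal domain $F[x;\delta]$ is generated by a normal element, $F[x;\delta]$ is simple. Consequently $\mathfrak pA$ is the \emph{unique} prime of $A$ over $\mathfrak p$, so $P=\mathfrak pA$. Every element of $F^\delta=\Frac(R/\mathfrak p)^\delta$ commutes with $F$ and with $x$, hence is central in $\Frac(A/P)=\Frac(F[x;\delta])$; rationality of $P$ therefore forces $F^\delta\subseteq k^{\alg}$, i.e.\ $\mathfrak p$ is a $\delta$-rational prime $\delta$-ideal. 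By hypothesis $\mathfrak p$ is $\delta$-locally closed, so the $\delta$-ideal $J:=\bigcap\{\mathfrak q:\mathfrak q\text{ a prime }\delta\text{-ideal of }R,\ \mathfrak q\supsetneq\mathfrak p\}$ properly contains $\mathfrak p$. I would then note that $JA=AJ$ is a two-sided ideal of $A$ with $JA\cap R=J$, so $JA\supsetneq\mathfrak pA=P$; and that every prime $P'\supsetneq P$ of $A$ has $P'\cap R\supsetneq\mathfrak p$ --- its contraction is a prime $\delta$-ideal of $R$ containing $\mathfrak p$, and it cannot equal $\mathfrak p$ since $P$ is the unique prime over $\mathfrak p$ --- whence $P'\supseteq J$ and so $P'\supseteq JA$. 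Therefore $\bigcap\{P':P'\supsetneq P\}\supseteq JA\supsetneq P$, and $P$ is locally closed.

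In the remaining case $\delta$ vanishes on $R/\mathfrak p$, so $A/\mathfrak pA\cong(R/\mathfrak p)[x]$ is an ordinary commutative polynomial ring over the domain $R/\mathfrak p$. If $P=\mathfrak pA$ then $\Frac(A/P)=\Frac(R/\mathfrak p)(x)$ has positive transcendence degree over $k$, contradicting rationality of $P$; so $P\supsetneq\mathfrak pA$, and then $\Frac(A/P)$ is a finite field extension of $\Frac(R/\mathfrak p)$. Rationality of $P$ now forces $R/\mathfrak p$ to be algebraic over $k$, hence $\mathfrak p$ to be a maximal ideal of $R$; and then $P$ corresponds to a maximal ideal of the principal ideal domain $(R/\mathfrak p)[x]$, so $P$ is a maximal ideal of $A$ and is vacuously locally closed. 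This exhausts the cases.

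I expect the main obstacle to be not any single case but the structural bookkeeping: correctly invoking the description of $\spec(R[x;\delta])$ in terms of the prime $\delta$-ideals of $R$ --- in particular the simplicity of $F[x;\delta]$ when $\delta$ acts nontrivially on the field $F$ --- and then, in the principal case, checking carefully that the ideal $JA$, manufactured from the $\delta$-local-closedness witness $J$ for $\mathfrak p$, is contained in every prime of $A$ that strictly contains $P$. Everything else should be routine once this scaffolding is in place.
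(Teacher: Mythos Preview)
Your proof is correct and follows essentially the same approach as the paper's: both reduce to showing $\mathfrak p=P\cap R$ is a $\delta$-rational prime $\delta$-ideal, invoke the hypothesis to get $\delta$-local-closedness of $\mathfrak p$, and use simplicity of $F[x;\delta]$ (for $\delta$ nontrivial on $F=\Frac(R/\mathfrak p)$) to conclude that primes strictly above $P$ must meet $R$ strictly above $\mathfrak p$. The only organisational difference is that you split explicitly on whether $\delta$ vanishes on $R/\mathfrak p$ --- handling the trivial-$\delta$ case by showing $P$ is outright maximal --- whereas the paper treats both cases in a single contradiction argument (assuming some $Q\supsetneq P$ with $Q\cap R=\mathfrak p$, deducing $F[x;\delta]$ is not simple, hence $\delta$ trivial on $F$, hence $P_0=0$ in the PID $F[x]$, hence $\Frac(A/P)=F(x)$, contradicting rationality); the substance is the same.
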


\begin{proof}
Suppose $P$ is a rational prime ideal of $R[x;\delta]$.
Let $I:=P\cap R$.
Then $I$ is a prime ideal of $R$ (see~\cite[Corollary to Lemma~2]{fisher}).
Moreover, $I$ is a $\delta$-ideal since if $a\in P\cap R$ then $\delta(a)=[x,a]\in P\cap R$.
It follows easily that $J:=IR[x;\delta]$ is an ideal of $R[x;\delta]$ that is contained in $P$ and that $R[x;\delta]/J\cong (R/I)[x;\delta]$ where we use $\delta$ to denote the induced derivation on $S:=R/I$.
Let $F=\Frac(S)$ be the field of fractions of $S$ and extend $\delta$ to $F$.
We claim that the $\delta$-constants of $F$ are all algebraic over $k$.
Indeed, note that if $f\in F$ with $\delta(f)=0$ then $f$ is a central element of $F[x;\delta]$.
We let $\tilde{P}$ denote the prime ideal in $S[x;\delta]$ corresponding to $P$ under the isomorphism $R[x;\delta]/J\cong S[x;\delta]$.
As $P\cap R=I$, we have that $\tilde{P}\cap S=0$, so that $\tilde{P}$ lifts to a prime ideal $P_0$ of $F[x;\delta]$.
The image of $f$ in $B:=F[x;\delta]/P_0$ is again a central element of $B$.
By construction $B$ is a localization of $S[x;\delta]/\tilde P\cong R[x;\delta]/P$ and thus passing to the full localization gives that $f$ is a central element of ${\rm Frac}(R[x;\delta]/P)$.
As $P$ is rational $f$ must be algebraic over~$k$.

We have shown that the prime $\delta$-ideal $I$ is $\delta$-rational.
By assumption it is therefore $\delta$-locally-closed.
Consequently, there is some $g\in R\setminus I$ such that every prime $\delta$-ideal of $R$ properly containing $I$ must contain~$g$.  

In order to prove that $P$ is locally closed it now suffices to show that whenever $Q\supsetneq P$ is prime then $Q\cap R\supsetneq P\cap R=I$. Indeed, if this is the case, then we have that
$\displaystyle g\in\bigcap\{Q\cap R:Q\supsetneq P\text{ prime}\}$.
Since $g\notin P$, we have in particular that $\displaystyle \bigcap\{Q:Q\supsetneq P\text{ prime}\}\neq P$.
That is, $P$ is locally closed.

Towards a contradiction therefore, let us assume that there exists a prime ideal $Q\supsetneq P$ with $Q\cap R=P\cap R=I$.
It follows that $F[x;\delta]$ is not simple: under the isomorphism $R[x;\delta]/J\cong S[x;\delta]$, $Q$ corresponds to a nonzero prime ideal $\tilde Q$ in $S[x;\delta]$ whose intersection with $S=R/I$ is trivial, so that $\tilde Q$ lifts to a nonzero prime ideal $Q_0$ in $F[x;\delta]$.
On the other hand, it is well-known that, as $F$ is a field of characteristic zero, if $\delta$ is nontrivial on $F$ then $F[x;\delta]$ is a simple ring (indeed this is a consequence of the fact that $F[x;\delta]$ is a left and right PID, see \cite[\S2.1]{SvdP}).
Thus, $\delta$ is trivial on $F$ and so $F[x;\delta]=F[x]$ is a PID.
So $P_0$, the lift of $\tilde P$ from $S[x;\delta]$ to $F[x;\delta]$, as it is properly contained in $Q_0$, must be $0$.
That is, $F[x;\delta]$ is a localisation of $R[x;\delta]/P$.
Hence, $\Frac\big(R[x;\delta]/P\big)=F(x)$, contradicting the rationality of $P$.

For the ``in particular" clause, note that $R[x;\delta]$ satisfies the Nullstellensatz -- by~\cite[Theorem~2]{Irving} for example -- and hence we already know that local-closedness implies primitivity and primitivity implies rationality.
\end{proof}

\begin{corollary}
\label{skewhopfdme}
Suppose $k$ is an 
algebraically closed field of characteristic zero and $R$ is a commutative affine Hopf $k$-algebra equipped with a $k$-linear derivation $\delta$ that is also an $a$-coderivation for some group-like $a\in R$.
Then $R[x;\delta]$ satisfies the $\dme$.
\end{corollary}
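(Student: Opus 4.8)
The plan is to obtain this corollary as an immediate combination of the two main results already established in this section, namely Theorem~\ref{twistdgroupDME-ring} and Proposition~\ref{ddme-sdme}; no genuinely new argument is needed. The key observation is that the data $(R,\delta)$ in the hypothesis of Corollary~\ref{skewhopfdme} is exactly the data to which Theorem~\ref{twistdgroupDME-ring} applies: $R$ is a commutative affine Hopf $k$-algebra and $\delta$ is a $k$-linear derivation on $R$ that is an $a$-coderivation for some group-like $a\in R$. (The assumption that $k$ is algebraically closed plays no role in the argument; it is carried here only for consistency with the other statements in the section, where it is genuinely used.)

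First I would invoke Theorem~\ref{twistdgroupDME-ring} to conclude that $(R,\delta)$ satisfies the $\delta$-$\dme$ in the sense of Definition~\ref{deltadme-rings}; in particular, every $\delta$-rational prime $\delta$-ideal of $(R,\delta)$ is $\delta$-locally-closed. Next, since $\sigma = \id$, the Ore extension $R[x;\sigma,\delta]$ is precisely the skew polynomial ring $R[x;\delta]$, so the hypothesis ``the $\delta$-rational prime $\delta$-ideals of $(R,\delta)$ are $\delta$-locally-closed'' of Proposition~\ref{ddme-sdme} is met. Applying the ``in particular'' clause of that proposition --- which uses that $R[x;\delta]$ satisfies the Nullstellensatz, so that local-closedness implies primitivity and primitivity implies rationality --- yields that the $\dme$ holds for $R[x;\delta]$, which is the assertion.

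There is no substantive obstacle remaining at this point: all the difficulty has been absorbed into the inputs. The real work lies upstream, on the one hand in Theorem~\ref{twistdgroupDME-ring} (whose proof passes through the geometry of $a$-twisted $D$-groups, the technical Proposition~\ref{magic}, the fibration $\pi\colon (G,s)\to (E,t_c)$ of Proposition~\ref{toe}, and the compound-isotriviality machinery of $\S$\ref{ci-section}), and on the other hand in Proposition~\ref{ddme-sdme} (the passage from the differential $\dme$ to the classical $\dme$ for skew polynomial rings, via the simplicity of $F[x;\delta]$ when $\delta$ is nontrivial on a field $F$ of characteristic zero). Once those are granted, the corollary is a one-line deduction, and I would present it as such.
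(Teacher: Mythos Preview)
Your proposal is correct and matches the paper's own proof exactly: the paper's argument is the single line ``Theorem~\ref{twistdgroupDME-ring} together with Proposition~\ref{ddme-sdme}.'' Your additional commentary about where the real work lies is accurate but not part of the formal proof.
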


\begin{proof}
Theorem~\ref{twistdgroupDME-ring} together with Proposition~\ref{ddme-sdme}.
\end{proof}

A special case of Corollary~\ref{skewhopfdme} is when $R$ is a differential Hopf $k$-algebra -- this yields Theorem~B1 of the introduction.
But the $\dme$ for $R[x;\delta]$ in that case is easier: one uses only Theorem~\ref{dgroupDME-ring} and the material in Section~\ref{atwist} is not necessary.

\medskip
\subsection{The case when $\delta$ is inner}
If $\sigma$ is an automorphism of $R$, and $a\in R$, then the map $r\mapsto a(r-\sigma(r))$ is a $\sigma$-derivation on $R$.
Such $\sigma$-derivations are called {\em inner}.
Here is a sufficient criterion for a $\sigma$-derivation $\delta$ being inner.\footnote{For a more general statement in the noncommutative case, see~\cite[Lemma~2.4(b)]{Goodearl}.}

\begin{lemma}
\label{factinner}
Suppose $R$ is a commutative ring with an automorphism $\sigma$ and a $\sigma$-derivation $\delta$.
Suppose there exists an element $f\in R$ such that $f-\sigma(f)$ is a unit.
Then $\delta$ is inner.
\end{lemma}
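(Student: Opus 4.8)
The plan is to write down the inner $\sigma$-derivation explicitly and verify that it equals $\delta$. Set $u:=f-\sigma(f)$, which is a unit by hypothesis, and put $a:=\delta(f)u^{-1}\in R$. The claim I would prove is that $\delta(r)=a\big(r-\sigma(r)\big)$ for all $r\in R$; this is precisely the assertion that $\delta$ is inner, so it suffices.

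The key step, and the only place where commutativity of $R$ is used, is to extract a symmetry from the twisted Leibniz rule. For any $r,s\in R$ we have $rs=sr$, so applying $\delta$ to both sides of this equation gives
\[
\sigma(r)\delta(s)+\delta(r)s=\sigma(s)\delta(r)+\delta(s)r .
\]
Rearranging, and using commutativity of $R$ again to move the factors $\delta(r),\delta(s)$ past the others, yields the identity
\[
\delta(s)\big(\sigma(r)-r\big)=\delta(r)\big(\sigma(s)-s\big)\qquad\text{for all }r,s\in R .
\]

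Now I would specialise $r=f$. Since $\sigma(f)-f=-u$ and $\sigma(s)-s=-(s-\sigma(s))$, the displayed identity becomes $-\delta(s)u=-\delta(f)\big(s-\sigma(s)\big)$, that is, $\delta(s)u=\delta(f)\big(s-\sigma(s)\big)$. Multiplying by $u^{-1}$ and invoking commutativity of $R$ gives $\delta(s)=\delta(f)u^{-1}\big(s-\sigma(s)\big)=a\big(s-\sigma(s)\big)$, exactly as required. (The verification that $r\mapsto a(r-\sigma(r))$ is a $\sigma$-derivation is the elementary computation recalled just before the lemma, but it is not even needed here, since the argument directly produces the formula for $\delta$.)

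There is no serious obstacle in this lemma; the only point that takes a moment of thought is spotting that commutativity of $R$ forces the symmetry identity $\delta(s)(\sigma(r)-r)=\delta(r)(\sigma(s)-s)$, after which inverting the unit $f-\sigma(f)$ immediately produces the element $a$ witnessing innerness.
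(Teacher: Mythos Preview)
Your proof is correct and follows exactly the approach indicated in the paper: the paper simply declares that $a:=\dfrac{\delta(f)}{f-\sigma(f)}$ witnesses innerness and leaves the verification to the reader, while you carry out that verification in full via the symmetry identity $\delta(s)(\sigma(r)-r)=\delta(r)(\sigma(s)-s)$ obtained from commutativity.
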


\begin{proof}
It is easy to see, using the commutativity of $R$, that  $a:=\frac{\delta(f)}{f-\sigma(f)}$ witnesses the inner-ness of $(R,\sigma,\delta)$.
\end{proof}

When $\delta$ is inner the Dixmier-Moeglin equivalence for $R[x;\sigma,\delta]$ follows easily from known results.
It makes use however of one more notion:

\begin{definition} Let $A$ be a finitely generated algebra over a field $k$.  We say that a $k$-vector subspace $V$ of $A$ is a 
{\em frame} for $A$ if $V$ is finite-dimensional, contains $1_A$, and generates $A$ as a $k$-algebra.
\end{definition}

\begin{lemma}
Suppose $R$ is a commutative affine Hopf algebra over a field $k$ of characteristic zero and $\sigma$ is a $k$-algebra automorphism of $R$ satisfying statement~(2) of Corollary~\ref{ourbozz}.
Then there there is a frame for $R$ such that $\sigma(V)=V$.
\label{rem: frame}
\end{lemma}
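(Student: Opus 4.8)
The plan is to use statement~(2) of Corollary~\ref{ourbozz} to observe that $\sigma$ preserves every subcoalgebra of $R$, and then to extract a $\sigma$-invariant frame from the fundamental theorem of coalgebras.

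The first step is to check that if $C\subseteq R$ is a subcoalgebra, in the sense that $\Delta(C)\subseteq C\otimes_k C$, then $\sigma(C)\subseteq C$. Indeed, for $r\in C$ write $\Delta(r)=\sum r_1\otimes r_2$ with all the $r_1,r_2$ in $C$; since $\chi$ takes values in $k$, the element $\sigma(r)=\sum\chi(r_1)r_2$ is then a $k$-linear combination of the $r_2\in C$, so $\sigma(r)\in C$. If in addition $C$ is finite-dimensional over $k$, then $\sigma{\upharpoonright}_C$ is an injective $k$-linear endomorphism of a finite-dimensional space (it is injective because $\sigma$ is injective on all of $R$), hence a bijection of $C$, and so $\sigma(C)=C$.

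The second step is to produce such a $C$ that is also a frame. Since $R$ is affine, fix $k$-algebra generators $a_1,\dots,a_n$ of $R$ and set $W:=k\cdot 1_R+k\cdot a_1+\cdots+k\cdot a_n$, a finite-dimensional subspace of $R$. By the fundamental theorem of coalgebras -- every finite-dimensional subspace of a coalgebra over a field is contained in a finite-dimensional subcoalgebra, as in Sweedler's or Montgomery's book -- there is a finite-dimensional subcoalgebra $C$ of $R$ with $W\subseteq C$. Then $V:=C$ is finite-dimensional, contains $1_R$, and contains each $a_i$ and hence generates $R$ as a $k$-algebra; so $V$ is a frame for $R$, and $\sigma(V)=V$ by the first step.

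There is no serious obstacle here: the argument is essentially formal once the fundamental theorem of coalgebras is invoked, and the hypothesis on $\sigma$ enters only through the identity $\sigma(r)=\sum\chi(r_1)r_2$, which is exactly what makes subcoalgebras $\sigma$-stable. If one prefers to avoid citing the fundamental theorem, one can build $C$ by hand: $\Delta(a_i)$ lies in $R_0\otimes_k R_0$ for some finite-dimensional subspace $R_0\ni a_i$, and iterating coassociativity shows that the $k$-span of the finitely many ``components'' produced this way is a finite-dimensional subcoalgebra containing $a_i$ -- but this is precisely the proof of the fundamental theorem.
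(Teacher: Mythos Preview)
Your proof is correct. The paper's primary argument is more concrete: it embeds $G=\operatorname{Spec}R$ into $\operatorname{GL}_n$ and takes for $V$ the span of $1$, the coordinate functions $x_{i,j}$, and $1/\det$, verifying directly that $\Delta(V)\subseteq V\otimes V$ and then deducing $\sigma(V)=V$ from statement~(2) exactly as you do. Your route---pass from an arbitrary generating space to a finite-dimensional subcoalgebra via the fundamental theorem of coalgebras, then observe that statement~(2) makes every subcoalgebra $\sigma$-stable---is precisely the alternative the paper records in a footnote (crediting a referee, citing Montgomery's Theorem~5.1.1). The advantage of your approach is that it never uses that $R$ is commutative or the co-ordinate ring of an algebraic group; the advantage of the paper's is that the resulting frame is explicit.
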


\begin{proof}
Suppose $R=k[G]$ where $G$ is an affine algebraic group.
Then $G$ is linear and hence we may embed $G$ into ${\rm GL}_n$.
This gives us a frame $V$ of $R$ spanned by the restriction to $G$ of $1$, the coordinate functions $x_{i,j}$, and $\frac{1}{\det}$.
Now, $\Delta(x_{i,j})=\sum x_{i,k}\otimes x_{k,j}$ and $\Delta(\frac{1}{\det})=\frac{1}{\det}\otimes \frac{1}{\det}$.
So $\Delta(V)\subseteq V\otimes V$.
Statement~(2) of Corollary~\ref{ourbozz} then implies that $\sigma(V)\subseteq V$, and hence by finite-dimensionality $\sigma(V)=V$.\footnote{As a referee pointed out to us, the existence of a frame $V$ with $\Delta(V)\subseteq V\otimes V$ can be deduced for arbitrary finitely generated Hopf algebras by starting with any frame $W$ and extending it to a finite-dimensional subcoalgebra $V$ by the Finiteness Theorem for Coalgebras~\cite[Theorem~5.1.1]{montgomery}.}
\end{proof}

\begin{proposition}
\label{inner}
Suppose $k$ is an uncountable algebraically closed field of characteristic zero, $R$ is a finitely generated commutative $k$-algebra, $\sigma$ is a $k$-algebra automorphism of $R$ that preserves a frame, and $\delta$ is an inner $\sigma$-derivation on $R$.
Then $R[x;\sigma,\delta]$ satisfies the $\dme$.
\end{proposition}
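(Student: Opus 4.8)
The plan is to strip off the derivation by a change of variable, reducing to a pure skew polynomial ring $R[y;\sigma]$, and then to treat the primes of that ring according to whether or not they contain $y$. Since $\delta$ is inner, I would fix $a\in R$ with $\delta(r)=a\bigl(r-\sigma(r)\bigr)$ for all $r\in R$ and set $y:=x-a$; a direct computation using the commutativity of $R$ gives $yr=\sigma(r)y$ for every $r$, so that $A=R[x;\sigma,\delta]=R[y;\sigma]$ as $k$-algebras. It therefore suffices to prove the $\dme$ for $R[y;\sigma]$. Two structural remarks will be used throughout: $y$ is a regular normal element of $A$ (one has $yR=Ry$, and as $A$ is free as a left $R$-module on the powers of $y$ with $\sigma$ an automorphism, both left and right multiplication by $y$ are injective); and $A$ is an affine noetherian $k$-algebra satisfying the Nullstellensatz (e.g.\ by~\cite{Irving}), so that local-closedness implies primitivity implies rationality, and the only thing left to prove is that rational primes of $A$ are locally closed.

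For a prime $P$ of $A$ with $y\in P$ the analysis is purely commutative: $A/(y)\cong R$, so $P$ corresponds to a prime $\overline P$ of $R$, and every prime of $A$ strictly above $P$ also contains $y$, hence corresponds to a prime of $R$ strictly above $\overline P$. Using that $R$ is affine over the algebraically closed field $k$ — so that $R$ is a Jacobson ring and the intersection of the nonzero primes of any positive-dimensional affine domain over $k$ is $0$ — one checks that ``$\overline P$ maximal'', ``$R/\overline P=k$'' and ``$\overline P$ locally closed in $R$'' are all equivalent, and that these correspond to $P$ being primitive, rational and locally closed in $A$ respectively. So the equivalence holds for such $P$.

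For a prime $P$ with $y\notin P$ I would pass to the Ore localisation $B:=A[y^{-1}]=R[y^{\pm1};\sigma]=R*\mathbb Z$. Contraction and extension give an inclusion-preserving bijection between the primes of $B$ and the primes of $A$ not containing $y$; moreover $\Frac(A/P)=\Frac(B/PB)$, so $P$ is rational in $A$ if and only if $PB$ is rational in $B$; and if $PB$ is locally closed in $B$ then $P$ is locally closed in $A$ — writing $J$ for the intersection of the primes strictly above $P$ not containing $y$ and $N=\sqrt{P+(y)}$ for the intersection of those that do contain $y$, local-closedness of $PB$ forces $J\supsetneq P$, one has $N\supsetneq P$ since $y\notin P$, and since $A/P$ is a prime ring the intersection of the two nonzero ideals $J/P$ and $N/P$ is nonzero. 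Hence, granting the $\dme$ for $B$, a rational prime $P$ of $A$ with $y\notin P$ has $PB$ rational, hence locally closed, hence $P$ locally closed. Combined with the previous paragraph this gives the $\dme$ for $A$.

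The hard part is thus the $\dme$ for $B=R[y^{\pm1};\sigma]$, and here is where the hypothesis that $\sigma$ preserves a frame $V$ enters. I would form the Zariski closure $\mathcal G$ of $\{\sigma^n{\upharpoonright}_V : n\in\mathbb Z\}$ in $\operatorname{GL}(V)$, which is a commutative affine algebraic group; since $V$ generates $R$, the algebra $R$ is a quotient of the symmetric algebra on $V$ by an ideal of relations $I$, and the stabiliser of $I$ in $\operatorname{GL}(V)$ is Zariski closed (argue degree by degree along the natural filtration) and contains $\langle\sigma{\upharpoonright}_V\rangle$, hence contains $\mathcal G$. Thus the $\mathcal G$-action on $V$ descends to a rational action of the algebraic group $\mathcal G$ on $R$ by $k$-algebra automorphisms in which $\langle\sigma\rangle$ is dense, exhibiting $B$ as a crossed product $R*\mathbb Z$ whose $\mathbb Z$-action is the restriction of a rational action of a commutative algebraic group. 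For such crossed products the $\dme$ holds; this is the delicate point, and it is where the uncountability of $k$ is used (to pass from rationality to primitivity) and where one must exploit that a ``locally algebraic'' automorphism has tame orbit structure — finitely many orbit types and well-behaved orbit closures — in order to force rationality to imply local-closedness. I expect this last input — the $\dme$ for a skew Laurent extension by an automorphism arising from a rational algebraic group action — to be the main obstacle, to be supplied either by appeal to the existing theory of primitive and rational ideals under algebraic group actions or by a dedicated argument that decomposes $\mathcal G$ into its torus and unipotent parts (the unipotent part being amenable to the isotriviality machinery of the previous sections).
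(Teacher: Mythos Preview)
Your opening move --- setting $y=x-a$ and checking $yr=\sigma(r)y$, so that $R[x;\sigma,\delta]=R[y;\sigma]$ --- is exactly what the paper does. At that point, however, the paper simply invokes~\cite[Theorem~1.6]{BWW}, which gives the $\dme$ for $R[y;\sigma]$ directly under precisely the hypotheses in force (finitely generated commutative $R$, $\sigma$ preserving a frame, $k$ uncountable algebraically closed). So the paper's proof is two lines: change of variable, then cite.

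Everything you write after the change of variable is an attempt to reprove~\cite{BWW} from scratch. Your decomposition into primes containing $y$ (handled commutatively via $A/(y)\cong R$) and primes avoiding $y$ (handled by localising to the skew Laurent ring $B=R[y^{\pm1};\sigma]$) is correct and standard, and your argument that local-closedness of $PB$ in $B$ pulls back to local-closedness of $P$ in $A$ is fine. Your idea of taking the Zariski closure $\mathcal G$ of $\langle\sigma{\upharpoonright}_V\rangle$ in $\operatorname{GL}(V)$ to obtain a rational algebraic-group action on $R$ is also the right one, and is essentially how~\cite{BWW} proceeds. But you yourself flag the remaining step --- $\dme$ for $R*\mathbb Z$ with the $\mathbb Z$-action coming from a rational algebraic-group action --- as ``the main obstacle'' and leave it to an appeal to literature or a sketch. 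That is precisely the nontrivial content of~\cite{BWW}, so your proposal is not self-contained at exactly the point where the paper's is not either; the difference is that the paper acknowledges this by citing, whereas you have unpacked the reduction but still need the same external input. If your aim is a proof at the level of the paper, just cite~\cite{BWW} after the change of variable; if your aim is genuine self-containment, the gap is real and substantial.
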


\begin{proof}
When $\delta=0$ this is~\cite[Theorem~1.6]{BWW}; while the theorem there is stated for $k=\mathbb C$ it holds for any uncountable algebraically closed field.
But if $a\in R$ is such that $\delta(r)=a(r-\sigma(r))$ for all $r\in R$,
then $R[x;\sigma,\delta]=R[t;\sigma,0]$ where $t:=x-a$.
Indeed, 
\begin{eqnarray*}
tr &=&
(x-a)r\\
&=& \sigma(r) x + \delta(r) - ar\\
&=& \sigma(r)x-a\sigma(r)\\
&=& \sigma(r)t
\end{eqnarray*}
for all $r\in R$,
and $\{t^n:n\geq 0\}$ can be seen to be another left $R$-basis for $R[x;\sigma,\delta]$ using the fact that, for any polynomial $P$, $P(t)$ is equal to $P(x)$ plus terms of strictly lower degree.
So the inner case reduces to the case when $\delta=0$.
\end{proof}

\medskip
\subsection{The general case}
Let us fix from now on a 
field $k$ of characteristic zero.
Our proof of Theorem~\ref{hodme} will go via reducing either to the case when $\sigma=\id$ or when $\delta$ is inner.
But it will require some preparatory lemmas.
First, let us point out that statement~(2) of Corollary~\ref{ourbozz} forces $(R,\sigma)$ to be of a very restricted form.

\begin{lemma}
Let $G$ be a connected affine algebraic group over $k$ and $\tau:G\to G$ an automorphism of $G$ over $k$.
Let $R=k[G]$, and $\sigma=\tau^*$ the corresponding $k$-algebra automorphism of $R$.
If $(R,\sigma)$ satisfies statement~(2) of Corollary~\ref{ourbozz} then $\tau: G\to G$ is translation by some central element of $G(k)$.
\label{lem: translation}
\end{lemma}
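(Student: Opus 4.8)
The plan is to translate statement~(2) of Corollary~\ref{ourbozz} into the geometry of $G$, where it will say precisely that $\tau$ is at once a left translation and a right translation by a single point, and that this point is therefore central.

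First I would identify the character: since $\chi\colon R\to k$ is a $k$-algebra homomorphism and $R=k[G]$, it corresponds to a point $c\in G(k)$, so that $\chi(r)=r(c)$ for all $r\in R$. Next I would unwind the two sums in statement~(2) using the fact that $\Delta r=\sum r_1\otimes r_2\in R\otimes_kR=k[G\times G]$ is the regular function $(g,h)\mapsto r(gh)$. One checks that $\sum\chi(r_1)r_2=(\chi\otimes\id)(\Delta r)$ is the function $h\mapsto r(ch)$, that is $L_c^*r$ where $L_c\colon G\to G$ denotes left translation $g\mapsto cg$; and that $\sum r_1\chi(r_2)=(\id\otimes\chi)(\Delta r)$ is the function $g\mapsto r(gc)$, that is $R_c^*r$ where $R_c\colon G\to G$ denotes right translation $g\mapsto gc$. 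Hence statement~(2) is exactly the assertion that $\sigma=L_c^*=R_c^*$ as $k$-algebra endomorphisms of $R$.

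Finally, since $R=k[G]$ and the coordinate-ring functor on affine $k$-varieties is faithful, $\sigma=\tau^*$ together with $\sigma=L_c^*=R_c^*$ forces $\tau=L_c=R_c$ as morphisms $G\to G$. From $\tau=L_c$ we get that $\tau$ is left translation by $c$, and comparing $L_c=R_c$ gives $cg=gc$ for all $g\in G$, so that $c\in Z(G)(k)$ and left translation by $c$ coincides with right translation by $c$; thus $\tau$ is translation by the central element $c\in G(k)$. I do not expect a genuine obstacle here, as the argument is a dictionary translation; the only points needing a little care are the identification of $(\chi\otimes\id)\circ\Delta$ and $(\id\otimes\chi)\circ\Delta$ with pullback along left and right translation by $c$, and the appeal to faithfulness of the coordinate-ring functor to pass from $\tau^*=L_c^*$ to $\tau=L_c$.
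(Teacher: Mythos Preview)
Your proposal is correct and follows essentially the same approach as the paper: identify $\chi$ with evaluation at a point $c\in G(k)$, read the two Sweedler expressions as pullback along left and right translation by $c$, and conclude that $\tau$ is translation by the central element $c$. The paper's proof is phrased pointwise (writing out $\sigma(f)(a)=f(ca)=f(ac)$ directly) rather than via the categorical language of $L_c^*$, $R_c^*$ and faithfulness, but the argument is the same.
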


\begin{proof}
Since $\chi: R\to k$ is a homomorphism there is some $c\in G(k)$ such that $\chi(f) = f(c)$ for all $f\in R$.
If we write $\Delta(f) = \sum f_1\otimes f_2$, then by the definition of the coproduct on $R$ we have
$f(ab) = \sum f_1(a)f_2(b)$ for all $a,b\in G$.
Now, property~(2) gives us that for all $a\in G$,
$\sigma(f)(a) = \sum \chi(f_1) f_2(a) = \sum f_1(c) f_2(a) = f(ca)$.
The other half of the equality in~(2) gives $\sigma(f)(a) = f(ac)$.
So $f(ca)=f(ac)$ for all $f\in R$, and hence $c$ is central in $G$.
On the other hand, $f(ca)=\sigma(f)(a)=f(\tau a)$ for all $f\in R$, so $\tau$ is translation by $c$.
\end{proof}

We will make use of the following notion.

\begin{definition}
Suppose $\sigma$ is an automorphism of a commutative ring $R$.
By a {\em $\sigma$-prime ideal} is meant a $\sigma$-ideal $I$ such that whenever $J$ and $K$ are $\sigma$-ideals with $JK\subseteq I$ then either $J\subseteq I$ or $K\subseteq I$.
\end{definition}

Note that a $\sigma$-prime ideal need not be prime.
But, at least in the case when $R$ is a commutative noetherian ring, a $\sigma$-prime ideal is radical; this follows from the fact that the nilpotent radical of $I$ is a $\sigma$-ideal and some power of it is contained in~$I$.
We will sometimes need to quotient out by $\sigma$-prime ideals that we do not know are prime, which means we will have to work with reduced difference rings that are not necessarily integral domains.
The following lemma about such difference rings will be very useful.

\begin{lemma} Suppose $R$ is a commutative ring endowed with an automorphism $\sigma$ such that $(0)$ is $\sigma$-prime.
If $0\neq f\in R$ satisfies $\sigma(f)\in Rf$ then $f$ is not a zero divisor in $R$.
\label{lem: eigenvector}
\end{lemma}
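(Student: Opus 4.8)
The plan is to reduce everything to the structure of the minimal primes of $R$ and to exploit the fact that a $\sigma$-prime zero ideal forces $\sigma$ to move them around transitively. I will work with $R$ noetherian, which is the case in every situation where this lemma is invoked; some such finiteness hypothesis is genuinely needed, since for $R=\prod_{n\in\mathbb Z}k$ with the shift automorphism, the characteristic function $f$ of $\{n\geq 0\}$ satisfies $\sigma(f)\in Rf$ and $f\neq 0$ but is a zero divisor.

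First I would record two preliminary facts. \emph{(a) $R$ is reduced.} Its nilradical $N$ is a $\sigma$-ideal, and, $R$ being noetherian, $N$ is nilpotent; if $N\neq 0$, choosing $M$ minimal with $N^M=0$ and $M\geq 2$, the $\sigma$-ideals $N^{M-1}$ and $N$ are both nonzero and have product $0$, contradicting the $\sigma$-primeness of $(0)$. \emph{(b) $\sigma$ permutes the minimal primes $\mathfrak p_1,\dots,\mathfrak p_r$ of $R$ transitively.} Indeed, if the induced permutation split $\{1,\dots,r\}$ into two nonempty $\sigma$-stable pieces $O$ and $O'$, then $J:=\bigcap_{i\in O}\mathfrak p_i$ and $J':=\bigcap_{i\in O'}\mathfrak p_i$ would be $\sigma$-ideals with $JJ'\subseteq J\cap J'=\bigcap_{i=1}^r\mathfrak p_i=(0)$ (using (a)), so $\sigma$-primeness would force, say, $J=(0)$; but then each $\mathfrak p_j$ with $j\in O'$ would contain $\bigcap_{i\in O}\mathfrak p_i=(0)$, hence would contain some $\mathfrak p_i$ with $i\in O$, hence would equal it by minimality — impossible since $O\cap O'=\emptyset$.

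Granting (a) and (b), the conclusion follows quickly. Suppose toward a contradiction that $af=0$ for some $a\neq 0$. By (a) we have $\bigcap_i\mathfrak p_i=(0)$, so $a\notin\mathfrak p_j$ for some $j$; since $\mathfrak p_j$ is prime this forces $f\in\mathfrak p_j$, and after reindexing $f\in\mathfrak p_1$. From $\sigma(f)\in Rf$ an immediate induction gives $\sigma^n(f)\in Rf\subseteq\mathfrak p_1$, i.e.\ $f\in\sigma^{-n}(\mathfrak p_1)$, for every $n\geq 0$; by the transitivity in (b) the ideals $\sigma^{-n}(\mathfrak p_1)$ run through all of $\mathfrak p_1,\dots,\mathfrak p_r$, so $f\in\bigcap_i\mathfrak p_i=(0)$, contradicting $f\neq 0$.

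The one step with real content is (b) — extracting transitivity of the $\sigma$-action on minimal primes from the $\sigma$-primeness of $(0)$; fact (a) is routine bookkeeping, and the final paragraph is purely formal once (b) is in hand. One could avoid singling out (a) by instead running (b) with suitable powers of $J$ and $J'$, but isolating reducedness first seems cleaner.
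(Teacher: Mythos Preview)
Your proof is correct, and your observation that some finiteness hypothesis is needed is well taken---your shift example on $\prod_{\mathbb Z}k$ genuinely shows the lemma fails without it.

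The paper's argument, however, is far more direct and avoids the detour through minimal primes entirely. It simply sets $J=Rf$ and $K=\{r\in R:rf=0\}$, observes that both are $\sigma$-ideals with $JK=(0)$, and invokes $\sigma$-primeness of $(0)$ together with $J\neq(0)$ to conclude $K=(0)$. That is the whole proof---three lines. The noetherian hypothesis is used implicitly in the claim that $J$ and $K$ are $\sigma$-ideals in the strong sense $\sigma(I)=I$: from $\sigma(f)\in Rf$ one gets the ascending chain $Rf\subseteq R\sigma^{-1}(f)\subseteq R\sigma^{-2}(f)\subseteq\cdots$, which stabilises in a noetherian ring to give $R\sigma(f)=Rf$, and then $\sigma(K)=K$ follows formally since $K=\operatorname{Ann}(J)$ with $\sigma(J)=J$. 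So both proofs rest on the same hidden hypothesis; you make it explicit and exhibit a counterexample, which is a genuine contribution, but the paper's route to the conclusion is much shorter. What your approach buys is the structural fact that $\sigma$ acts transitively on the minimal primes---interesting in its own right, though not needed here.
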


\begin{proof}
Let $J =Rf$. Then $J$ is a $\sigma$-ideal of $R$.  It follows that $K:=\{r\in R\colon rf=0\}$ is also a $\sigma$-ideal of $R$.  Then by construction $JK=(0)$. Since $(0)$ is $\sigma$-prime and $J$ is nonzero, we see that $K=(0)$ and so we obtain the desired result.
\end{proof}

Finally, we will make use of the following fundamental result on Ore extensions of commutative noetherian rings.

\begin{fact}[Goodearl~\cite{Goodearl}]
\label{fundamental}
Suppose $R$ is a commutative noetherian ring, $\sigma$ is an automorphism of $R$, and $\delta$ is a $\sigma$-derivation.
Suppose $P$ is a prime ideal of the Ore extension $R[x;\sigma,\delta]$, and let $I=P\cap R$.
Then one of the following three statements must hold:
\begin{itemize}
\item[I.]
$R[x;\sigma,\delta]/P$ is commutative.
\item[II.]
$I$ is a {\em $(\sigma,\delta)$-ideal} of $R$ -- that is, $I$ is preserved under $\sigma$ and $\delta$ --  and there is a prime ideal $I'$ of $R$ containing $I$ such that $\sigma(r)-r\in I'$ for all $r\in R$.
\item[III.]
$I$ is a $\sigma$-prime $(\sigma,\delta)$-ideal of $R$ and $IR[x;\sigma,\delta]$ is a prime ideal of $R[x;\sigma,\delta]$.
\end{itemize}
\end{fact}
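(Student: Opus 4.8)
The statement is a structural theorem of Goodearl, and the strategy I would follow is a case analysis of the prime quotient $B:=A/P$, governed by the behaviour of the image $\bar x$ of $x$ in $B$ and of the image $\bar R\cong R/I$ of $R$, where $I:=P\cap R$. The first dichotomy is simply whether $B$ is commutative: if it is, we are in case~I and there is nothing to prove. So from now on assume $B$ is noncommutative.

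The essential preliminary step is then to show that in this case $I$ is a $(\sigma,\delta)$-ideal of $R$ (necessarily radical) and is $\sigma$-prime. I would approach this by first clearing away the degenerate configurations that would force $B$ to be commutative: for instance, if $\bar x=0$ then, killing $x$ in the relation $xr=\sigma(r)x+\delta(r)$, $B$ becomes a homomorphic image of $R$ modulo the ideal generated by $\delta(R)$, hence commutative; analogous arguments treat the other ways in which $\sigma$, $\delta$, or the higher commutators $(\operatorname{ad} x)^n$ could collapse the noncommutative part of $B$. Outside these configurations $\bar x$ is a regular element of the prime Noetherian ring $B$, and a leading-$x$-degree analysis of the relations $(\operatorname{ad}x)^n(r)\in P$ for $r\in I$, using Noetherianity of $R$ to control the resulting chains of ideals, yields $\sigma(I)=I$ and $\delta(I)\subseteq I$; $\sigma$-primeness of $I$ then follows from primeness of $P$ by lifting $\sigma$-ideals of $R/I$ to two-sided ideals of $B$.

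Granting this, I would replace $R$ by $R/I$ and $A$ by $A/IA\cong(R/I)[x;\bar\sigma,\bar\delta]$, so that now $I=P\cap R=0$ and $R$ is reduced and $\sigma$-prime; the goal becomes showing that either $0=IR[x;\sigma,\delta]$ is prime — which, together with $R$ being $\sigma$-prime, is case~III — or some prime $I'\supseteq I$ of $R$ has $\sigma(r)-r\in I'$ for all $r$, which is case~II. Here I would pass to the total quotient ring of $R$, a finite product $\prod_i\mathcal Q_i$ of fields with the minimal primes $\mathfrak p_i$ of $R$ permuted by $\sigma$. Since $R$ is $\sigma$-prime the $\mathfrak p_i$ form a single $\sigma$-orbit (otherwise the intersection of the primes in one orbit is a proper $\sigma$-stable $(\sigma,\delta)$-ideal across which $P$ contracts, landing us in case~II), so a Morita-type reduction brings us to the Ore extension of a single field $\mathcal Q=\mathcal Q_1$ by the automorphism $\bar\sigma^{d}$ (and an induced derivation), $d$ the orbit size. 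An Ore extension of a field is a noncommutative principal ideal domain, so after localisation $P$ is principal, and one analyses its generator: if $\bar\sigma^{d}\neq\operatorname{id}$ one uses Lemma~\ref{factinner} to make the derivation inner and then the substitution $t=x-a$ of the proof of Proposition~\ref{inner} to reduce to $\mathcal Q[t;\bar\sigma^{d}]$, whereas if $\bar\sigma^{d}=\operatorname{id}$ and the derivation is nonzero then $\mathcal Q[x;\bar\delta]$ is already simple in characteristic zero (cf.~\cite[\S2.1]{SvdP}). In each subcase one checks, crucially using the noncommutativity of $B$ to exclude quotients that become commutative, that the only surviving possibilities are exactly case~III or that $\sigma$ acts trivially modulo some $\mathfrak p_i$, i.e.\ case~II.

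The main obstacle is the preliminary step of the second paragraph: that noncommutativity of $B=A/P$ forces $I=P\cap R$ to be a $\sigma$-prime $(\sigma,\delta)$-ideal. This is exactly where the Noetherian hypothesis on $R$ enters in an essential way — to promote the `inner' description of $\sigma$ on $\operatorname{Frac}(B)$ to honest $\sigma$-stability of $I$ — and it requires a careful enumeration of the degenerate configurations in which $B$ is secretly commutative. Everything downstream is a lengthy but essentially routine descent to Ore extensions over fields.
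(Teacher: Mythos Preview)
The paper does not give a proof of this statement: it is recorded as a \emph{Fact} attributed to Goodearl~\cite{Goodearl} and is simply invoked in the proof of Theorem~\ref{hodme}. So there is no ``paper's own proof'' to compare your proposal against; you are attempting to reconstruct a result that the authors import wholesale from the literature.

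That said, your sketch has a genuine gap in the step you yourself flag as the main obstacle. You assert that once $B=A/P$ is noncommutative, $I=P\cap R$ is a $(\sigma,\delta)$-ideal and is $\sigma$-prime, and you justify $\sigma$-primeness by ``lifting $\sigma$-ideals of $R/I$ to two-sided ideals of $B$''. This does not work: if $J\subseteq R$ is merely $\sigma$-stable (not $\delta$-stable), then $JA$ need not be a two-sided ideal of $A$, because $xJ\subseteq\sigma(J)x+\delta(J)$ and you have no control over $\delta(J)$. The actual argument in Goodearl's paper is considerably more delicate and does \emph{not} proceed by first establishing that $I$ is always $(\sigma,\delta)$-stable in the noncommutative case; rather, the cases where $I$ fails to be $\sigma$-stable are analysed separately (leading, after a change of variable, to conclusions of type~I or~II), and this is where much of the work lies. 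Your ``leading-$x$-degree analysis of $(\operatorname{ad} x)^n(r)\in P$'' is too vague to cover this.

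Two smaller issues: your appeal to simplicity of $\mathcal Q[x;\bar\delta]$ in characteristic zero is misplaced, since Fact~\ref{fundamental} is stated for arbitrary commutative noetherian $R$ with no characteristic hypothesis; and your use of Lemma~\ref{factinner} presupposes the existence of $f$ with $f-\sigma(f)$ a unit, which is not automatic over a field with a nontrivial automorphism.
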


We can now prove the theorem.

\begin{proof}[Proof of Theorem~\ref{hodme}]
We have that $R$ is a commutative affine Hopf $k$-algebra equipped with a $k$-algebra automorphism $\sigma$ and a $k$-linear $\sigma$-derivation $\delta$, and that the Ore extension $A:=R[x;\sigma,\delta]$ admits a Hopf algebra structure extending that of~$R$.
We wish to show that $A$ satisfies the $\dme$.
By~\cite[Theorem~2]{Irving} we have that $A$ satisfies the Nullstellensatz, and so it suffices to prove that if $P$ is a rational prime ideal of $A$ then $P$ is locally closed.

We first reduce to the case when $k$ is algebraically closed.
Since $R=k[G]$ where $G$ is an affine algebraic group, and hence smooth, $R$ is integrally closed.
Let $F$ denote the field of fractions of $R$ and let $F_0:=k^{\alg}\cap F$.
Since $R$ is integrally closed, $F_0\subseteq R$.
Since $F$ is a finitely generated extension of $k$, $F_0$ is a finite extension of $k$.  Let $R' : = R\otimes_{F_0} k^{\alg}$.
Since $F_0$ is relatively algebraically closed in $F$, we see that $R'$ is again an integral domain.
Thus $R'$ is a commutative affine Hopf $k^{\alg}$-algebra to which we extend $\sigma$ and $\delta$ by $k^{\alg}$-linearity.
Suppose we have proven the $\dme$ for $R'[x;\sigma,\delta]$.
Then Irving-Small reduction techniques (see Irving-Small \cite{IS} and also Rowen \cite[Theorem 8.4.27]{Rowen}) give that $A=R[x;\sigma,\delta]$ satisfies the $\dme$ over~$F_0$.
But since $F_0$ is a finite extension of $k$, we get the $\dme$ over $k$ also.

Next we reduce to the case when $k$ is uncountable (in order to be able to use Proposition~\ref{inner}).
Let $L$ be an uncountable algebraically closed extension of $k$.
Then since $k$ is algebraically closed we see that $R\otimes_k L$ is a commutative affine Hopf $L$-algebra to which we extend $\sigma$ and $\delta$ by $L$-linearity, and $B:=A\otimes_k L\cong (R\otimes_k L)[x;\sigma,\delta]$.
Assume the $\dme$ holds for $B$.
Let $P$ be a rational prime ideal of $A$ and let $Q=P\otimes_k L$.
Since $k$ is algebraically closed, $Q$ is a prime ideal of $B$.
Since $P$ is rational and $B/Q = (A/P)\otimes_k L$, we see that $Q$ is a rational.
Hence $Q$ is locally closed.
Since the primes in $A$ containing $P$ lift to primes in $B$ containing~$Q$, it follows that $P$ is locally closed in $A$.
So $A$ satisfies the $\dme$.

We may therefore assume that $k$ is uncountable and algebraically closed.

If $\sigma=\id$ then $\delta$ is a $k$-linear derivation on $R$ and statement~(1) of Corollary~\ref{ourbozz} tells us that it is also an $a$-coderivation on for some group-like $a\in R$.
It follows by Corollary~\ref{skewhopfdme} that $A=R[x;\delta]$ satisfies the $\dme$.
So we may assume $\sigma\neq\id$.

We may also assume that $A/P$ is not commutative.
Indeed, if it were, as $P$ is rational, we would have that $\Frac(A/P)\subseteq k$, so that $P$ is a maximal ideal and hence locally closed.

Write $R=k[G]$ where $G$ is a connected affine algebraic group over $k$.
By Lemma~\ref{lem: translation} we know that $\sigma=\tau^*$ where $\tau:G\to G$ is translation by a central (non-identity) element $c\in G(k)$.

Our next goal is to reduce to the case that $P\cap R=(0)$, though in order to obtain this we will have to give up on $R$ being an integral domain.
Let $I=R\cap P$.
We have already ruled out case~(I) of Fact~\ref{fundamental}.
On the other hand, case~(II) cannot hold: $\sigma$ would induce the identity map on $R/I'$, implying that $\tau$ is the identity on $V(I')$, which contradicts the fact that it is translation on $G$ by a non-identity element.
Hence case~(III) holds; $I$ is a $\sigma$-prime $(\sigma,\delta)$-ideal of $R$ and $J:=IA$ is a prime ideal of $A$.
Consider now the reduced quotient ring $\overline R:=R/I$ with the induced automorphism which we continue to denote by $\sigma$, and the induced $\sigma$-derivation which we continue to denote by $\delta$.
Let $\overline A=A/J\cong \overline R[x;\sigma,\delta]$ and $\overline P$ the image of $P$ in $\overline A$.
Since $J$ is contained in $P$, $\overline P$ is rational in $\overline A$ and it suffices to show that $\overline P$ is locally closed in $\overline A$.
Note that we have achieved $\overline P\cap \overline R=(0)$.

Next, we claim that there is some non zero-divisor $f\in\overline R$ such that $(\sigma,\delta)$ extends to $\widetilde R:=\overline R[1/f]$ and $\delta$ is inner on $\widetilde R$.
To see this, consider the frame $V$ for $R$, given by Lemma~\ref{rem: frame}, that is preserved by $\sigma$.
The image $\overline V$ of $V$ in $\overline R$ is then a frame for $\overline R$ that is also preserved by~$\sigma$.
Using $k=k^{\alg}$, let $f$ be an eigenvector for the action of $\sigma$ on $\overline V$, say $\sigma(f)=\lambda f$ for some $\lambda\in k^*$.
By Lemma~\ref{lem: eigenvector}, $f$ is not a zero divisor.
Moreover, the multiplicatively closed subset $\{1, f,f^2,\ldots\}$ of $R$ is preserved by $\sigma$, and hence by~\cite[Lemma 1.3]{Goodearl}, $(\sigma,\delta)$ extends uniquely to the localisation at this set, namely to $\widetilde R:=\overline R[1/f]$.
It remains to show that $f$ can be chosen so that $\delta$ is inner on $\widetilde R$.
If the eigenvalue $\lambda$ is not equal to $1$, then $f-\sigma(f)=(1-\lambda)f$ is a unit in $\widetilde R$, and we get $\delta$ inner by Lemma~\ref{factinner}.
So suppose that $1$ is the only eigenvalue for $\sigma$ on $\overline V$.
Note that $\sigma$ is not the identity operator on $\overline R$ because $\tau$ is not the identity on $V(I)$.
Since $\overline V$ generates $\overline R$ as a $k$-algebra, $\sigma$ is not the identity on $\overline V$ either.
Hence there must be some Jordan block that is of size greater than one, but with eigenvalue $1$.
So we can choose the eigenvector $f$ in such a way that there exists nonzero $g\in \overline V$ with $\sigma(g)=g+f$.
Hence $g-\sigma(g)$ is a unit in $\widetilde R=\overline R[1/f]$, and so by Lemma~\ref{factinner} again, $\delta$ is inner on $\widetilde R$.

To prove that $\overline P$ is locally closed let us consider the following partition of the set of prime ideals of $\overline A$ that properly extend $\overline P$.
\begin{eqnarray*}
S_1
&:=&
\{Q\supsetneq P: Q\text{ prime, and no power of $f$ is in }Q\}\\
S_2
&:=&
\{Q\supsetneq P: Q\text{ prime, not in $S_1$, and $Q\cap \overline R$ is a $\sigma$-prime $(\sigma,\delta)$-ideal}\}\\
S_3
&:=&
\{Q\supsetneq P: Q\text{ prime, and not in $S_1$ or $S_2$}\}
\end{eqnarray*}
It suffices to show that for each of $i=1,2,3$, $\bigcap S_i\neq \overline P$.

For $i=2$, note that as $\sigma$-prime implies radical, we have that $f\in Q$ for all $Q\in S_2$, but $f\notin\overline P$ as $\overline P\cap \overline R=(0)$.

For $i=3$, applying Fact~\ref{fundamental} to $Q\in S_3$, we have that either $\overline A/Q$ is commutative or there is in $\overline R=R/I$ a prime ideal $\overline I:=I'/I$ extending $Q\cap\overline R$, and such that $\sigma$ is the identity on $\overline R/\overline I=R/I'$.
The latter case is impossible using again that $\sigma=\tau^*$ and $\tau$ is translation on $G$ by a non-identity element.
So $\overline A/Q$ is commutative for all $Q\in S_3$.
As $\overline A/\overline P=A/P$ is not commutative there exist $a,b\in\overline A$ such that $g:=[a,b]\notin\overline P$.
But $g\in Q$ for all $Q\in S_3$.

It remains therefore to consider $S_1$.
Let $$\widetilde A=\widetilde R[x;\sigma,\delta]=R[\frac{1}{f}][x;\sigma,\delta].$$
As $\delta$ is inner on $\widetilde R$, Proposition~\ref{inner} tells us that $\widetilde A$ satisfies the Dixmier-Moeglin equivalence.
As $\overline P\cap \overline R=(0)$, we know that no power of $f$ is in~$\overline P$, and hence $\widetilde P:=\overline P\widetilde A$ is a prime ideal.
As $\widetilde A$ is a localisation of $\overline A$ we have that $\widetilde P$ is rational, and hence locally closed.
If $Q\in S_1$ then $Q\widetilde A$ is a prime ideal properly extending $\widetilde P$.
So there is $\alpha\in \widetilde A\setminus\widetilde P$ such that $\alpha\in Q\widetilde A$ for all $Q\in S_1$.
For some $n\geq 0$, $f^n\alpha\in \overline A$.
So $f^n\alpha\in Q\widetilde A\cap \overline A=Q$.
But $f^n\alpha\notin \overline P$.
So $\bigcap S_1\neq \overline P$, as desired.
\end{proof}


\begin{thebibliography}{10}

\bibitem{pdme}
J.~Bell, S.~Launois, O.~Le\'on S\'anchez, and R.~Moosa.
\newblock Poisson algebras via model theory and differential-algebraic
  geometry.
\newblock {\em J. Eur. Math. Soc. (JEMS)}, 19(7):2019--2049, 2017.

\bibitem{BellLeung}
J.~Bell and W.~Leung.
\newblock The {D}ixmier-{M}oeglin equivalence for cocommutative {H}opf algebras
  of finite {G}elfand-{K}irillov dimension.
\newblock {\em Algebras and Representation Theory}, 17(6):1843--1852, 2014.

\bibitem{BWW}
J.~Bell, K.~Wu, and S.~Wu.
\newblock The {D}ixmier-{M}oeglin equivalence for extensions of scalars and
  {Ore} extensions.
\newblock To appear in Contemp. Math.

\bibitem{BrownGoodearl}
K.~A. Brown and K.~Goodearl.
\newblock {\em Lectures on algebraic quantum groups}.
\newblock Advanced Courses in Mathematics. CRM Barcelona. Birkh\"auser Verlag,
  Basel, 2002.

\bibitem{bozz}
K.A. Brown, S.~O'Hagan, J.J. Zhang, and G.~Zhuang.
\newblock Connected {H}opf algebras and iterated {O}re extensions.
\newblock {\em Journal of Pure and Applied Algebra}, 219(6):2405--2433, 2015.

\bibitem{Buium}
A.~Buium.
\newblock {\em Differential algebraic groups of finite dimension}, volume 1506
  of {\em Lecture Note in Mathematics}.
\newblock Springer-Verlag, 1992.

\bibitem{fisher}
J.~R. Fisher.
\newblock A {G}oldie theorem for differentiably prime rings.
\newblock {\em Pacific J. Math.}, 58(1):71--77, 1975.

\bibitem{freitagmoosa}
J.~Freitag and R.~Moosa.
\newblock Finiteness theorems on hypersurfaces in partial
  differential-algebraic geometry.
\newblock To appear in Advances in Mathematics.

\bibitem{Goodearl}
K.~Goodearl.
\newblock Prime ideals in skew polynomial rings and quantized {W}eyl algebras.
\newblock {\em Journal of Algebra}, 150(2):324--377, 1992.

\bibitem{hrushovski-jouanolou}
E.~Hrushovski.
\newblock A generalization of a theorem of {J}ouanolou's.
\newblock Unpublished, 1995.

\bibitem{Hrushovski}
E.~Hrushovski.
\newblock Computing the galois group of a linear differential equation.
\newblock In {\em Differential Galois theory}, volume~58, pages 97--138. Banach
  Center Publications, Polish Academy of Sciences, 2002.

\bibitem{Irving}
R.~Irving.
\newblock Generic flatness and the {N}ullstellensatz for {O}re extensions.
\newblock {\em Comm. Algebra}, 7(3):259--277, 1979.

\bibitem{IS}
R.~Irving and L.~Small.
\newblock On the characterization of primitive ideals in enveloping algebras.
\newblock {\em Math. Z.}, 173(3):217--221, 1980.

\bibitem{kaplansky}
I.~Kaplansky.
\newblock {\em An introduction to differential algebra}.
\newblock Hermann, Paris, second edition, 1976.

\bibitem{kolchin}
E.~R. Kolchin.
\newblock {\em Differential algebra and algebraic groups}.
\newblock Academic Press, New York-London, 1973.
\newblock Pure and Applied Mathematics, Vol. 54.

\bibitem{kowalskipillay}
P.~Kowalski and A.~Pillay.
\newblock Quantifier elimination for algebraic {$D$}-groups.
\newblock {\em Trans. Amer. Math. Soc.}, 358(1):167--181, 2006.

\bibitem{marker}
D.~Marker.
\newblock Manin kernels.
\newblock In {\em Connections between Model Theory and Algebraic and Analytic
  Geometry}, volume~6 of {\em Quaderni di Matematica}, pages 1--21.
  Dipartimento di Matematica Seconda Universit\`a di Napoli, 2000.

\bibitem{mcconnell-robson}
J.~C. McConnell and J.~C. Robson.
\newblock {\em Noncommutative {N}oetherian rings}, volume~30 of {\em Graduate
  Studies in Mathematics}.
\newblock American Mathematical Society, Providence, RI, revised edition, 2001.
\newblock With the cooperation of L. W. Small.

\bibitem{montgomery}
S.~Montgomery.
\newblock {\em Hopf algebras and their actions on rings}, volume~82 of {\em
  CBMS Regional Conference Series in Mathematics}.
\newblock Published for the Conference Board of the Mathematical Sciences,
  Washington, DC; by the American Mathematical Society, Providence, RI, 1993.

\bibitem{panov}
A.~N. Panov.
\newblock Ore extensions of {H}opf algebras.
\newblock {\em Mat. Zametki}, 74(3):425--434, 2003.

\bibitem{pillay96}
A.~Pillay.
\newblock Differential algebraic groups and the number of countable
  differentially closed fields.
\newblock In {\em Model Theory of Fields}, volume~5 of {\em Lecture Notes in
  Logic}, pages 114--134. Springer, 1996.

\bibitem{pillay06}
A.~Pillay.
\newblock Remarks on algebraic {$D$}-varieties and the model theory of
  differential fields.
\newblock In {\em Logic in {T}ehran}, volume~26 of {\em Lect. Notes Log.},
  pages 256--269. Assoc. Symbol. Logic, La Jolla, CA, 2006.

\bibitem{rosenlicht}
M.~Rosenlicht.
\newblock Toroidal algebraic groups.
\newblock {\em Proc. Amer. Math. Soc.}, 12:984--988, 1961.

\bibitem{Rowen}
L.~Rowen.
\newblock {\em Ring theory. {V}ol. {II}}, volume 128 of {\em Pure and Applied
  Mathematics}.
\newblock Academic Press, Inc., Boston, MA, 1988.

\bibitem{SvdP}
M.~Singer and M.~van~der Put.
\newblock {\em Galois theory of linear differential equations}, volume 328.
\newblock Grundlehren der mathematischen Wissenschaften, Springer, 2003.

\bibitem{Skryabin}
S.~Skryabin.
\newblock New results on the bijectivity of antipode of a {H}opf algebra.
\newblock {\em J. Algebra}, 306(2):622--633, 2006.

\end{thebibliography}

\end{document}